\DeclareMathOperator{\aut}{Aut}
\DeclareMathOperator{\res}{res}
\newcommand{\dd}{\mathrm{d}}
\DeclareMathOperator{\dR}{dR}
\DeclareMathOperator{\pcl}{pcl}
\DeclareMathOperator{\GD}{GD}
\newcolumntype{M}{>{$}c<{$}} 
\newcolumntype{L}{>{$}l<{$}}
\DeclareMathOperator{\Hom}{\mathscr{H}\kern -2pt om}
\DeclareMathOperator{\Ext}{\mathscr{E}\kern -1.5pt xt}
\newtheorem{theorem}{Theorem}[section]
\numberwithin{theorem}{section} 
\newcommand{\nameofthm}{}
\newtheorem{genericThm}[theorem]{\nameofthm}
\newtheorem*{theorem*}{Theorem}
\newtheorem{lemma}[theorem]{Lemma}
\newtheorem{corollary}[theorem]{Corollary}
\newtheorem{proposition}[theorem]{Proposition}
\theoremstyle{definition}
\newtheorem*{definition*}{Definition}
\newtheorem{definition}[theorem]{Definition}
\newtheorem{notation}[theorem]{Notation}
\newtheorem{remark}[theorem]{Remark}
\newtheorem*{notation*}{Notation}
\newcommand{\nameofenv}{}
\newtheorem*{generic}{\nameofenv}
\newcommand{\nameOfNumEnv}{}
\newtheorem{genericNumbered}[theorem]{\nameOfNumEnv}
\newenvironment{namedNum}[1]
{%
  \renewcommand{\nameOfNumEnv}{#1}%
  \begin{genericNumbered}%
}
{%
  \end{genericNumbered}%
}
\def\imod#1{\allowbreak\mkern10mu({\operator@font mod}\,\,#1)}
\newcommand{\del}{\partial}
\newcommand{\p}{\Pp^1}
\newcommand{\pp}{\Pp^2}
\newcommand{\ppp}{\Pp^3}
\newcommand{\inv}{^{-1}}
\newcommand{\set}[1]{\{ #1 \}}
\DeclarePairedDelimiter{\ceil}{\lceil}{\rceil}
\newcommand{\tos}{\twoheadrightarrow}
\newcommand{\toi}{\hookrightarrow}
\newcommand{\isoto}{\overset{\sim}{\to}}
\newcommand{\Cc}{\mathbbm{C}}
\newcommand{\Kk}{\mathbbm{K}}
\newcommand{\Qq}{\mathbbm{Q}}
\newcommand{\Pp}{\mathbbm{P}}
\newcommand{\Rr}{\mathbbm{R}}
\newcommand{\Zz}{\mathbbm{Z}}
\renewcommand{\H}{\mathrm{H}}
\newcommand{\PH}{\mathrm{PH}}
\newcommand{\xc}{\mathfrak{c}}
\newcommand{\xj}{\mathfrak{j}}
\newcommand{\cd}{\mathcal{D}}
\newcommand{\cl}{\mathcal{L}}
\newcommand{\co}{\mathcal{O}}
\newcommand{\cp}{\mathcal{P}}
\newcommand{\cx}{\mathcal{X}}
\author[E.~C.~Sert\"oz]{Emre Can Sert\"oz}
\address{Emre Can Sert\"oz\\
Max Planck Institute for Mathematics in the Sciences, Inselstr. 22\\ 04103 Leipzig, Germany}
\email{emresertoz@gmail.com}
\title{Computing periods of hypersurfaces}
\date{\today}
\subjclass[2010]{32G20, 14C30,  68W30, 14D07, 14K20}
\keywords{Picard--Fuchs equations, Hodge theory, Griffiths--Dwork reduction, periods, algorithms}
\begin{document}

\begin{abstract}
  We give an algorithm to compute the periods of smooth projective hypersurfaces of any dimension. This is an improvement over existing algorithms which could only compute the periods of plane curves. Our algorithm reduces the evaluation of period integrals to an initial value problem for ordinary differential equations of Picard--Fuchs type. In this way, the periods can be computed to extreme-precision in order to study their arithmetic properties. The initial conditions are obtained by an exact determination of the cohomology pairing on Fermat hypersurfaces with respect to a natural basis. 
\end{abstract}

\maketitle

\section{Introduction}

Let $X$ be a smooth complex projective hypersurface defined as the zero locus of a homogeneous polynomial with complex coefficients (see Remark~\ref{rem:effective_subfield} for more on the nature of these coefficients). The periods of $X$ can be concretely defined as integrals of rational functions on the ambient projective space as in Section~\ref{sec:intro_periods}. From a geometric point of view, the interest in periods arises because they determine the Hodge structure on the cohomology of $X$ and, when taken together with the intersection product on cohomology, periods often determine $X$ up to isomorphism. 
Indeed, there are properties which are much easier to discern from the periods of $X$ than from the defining equation of $X$. The computation of periods has been forbidding and there appears to have been no general purpose algorithm to compute periods when the dimension of $X$ is greater than one. In this paper, we give an algorithm which computes the periods of smooth hypersurfaces of any degree and dimension\footnote{An implementation of the algorithm is available here \url{https://github.com/emresertoz/PeriodSuite}.}. 

One striking area of application for the computation of periods is the determination of Picard rank of a surface in $\ppp$. This is a difficult problem which has received a great deal of attention~\cite{shioda-81,vL07, charles-14, schutt-15}. Nevertheless, there is no known algorithm for surfaces of degree greater than four and no implementations of the algorithm in~\cite{charles-14} for surfaces of degree four. On the other hand, if the periods and the intersection product of a surface are known, then its Picard rank is easily computed: see~\cite[Fact 3.11]{elsenhans-18} for a demonstration on K3 surfaces. We should note, however, that a \emph{reliable} computation of the Picard rank from the periods requires that the periods be computable to several hundreds of digits of precision. We give a detailed study of this application in another paper~\cite{lairez-sertoz}.

In higher dimensional algebraic geometry, periods appear in more subtle ways. Let us describe an instance where our algorithm performs especially well (see Section~\ref{sec:threefold}): degree three hypersurfaces in $\Pp^4$, that is, cubic threefolds. A celebrated result of Clemens and Griffiths~\cite{clemens-72} implies that no smooth cubic threefold is rational. Their proof builds upon the study of what they dubbed the intermediate Jacobian, which, in this case, is the quotient of $\Cc^5$ by the period vectors of the threefold. They also prove that the cubic threefold is determined up to isomorphism by its intermediate Jacobian. In light of this result, we expect the explicit computation of the periods of a cubic threefold, and hence its intermediate Jacobian, would be a fruitful source of experimentation.

Although it is not the emphasis of our algorithm, periods of curves deserve special mention---not least because of the role they played in the development of the modern understanding of periods. The Abel--Jacobi map, assigning to each curve its periods, continues to play an important role in algebraic geometry and its applications. 

For instance Guardia~\cite{guardia--explicit_genus_3} computed the periods of a particular family of genus three curves and studied their geometry, that is, the positions of the bitangents and the automorphism groups of these curves, using the periods. Extending Guardia's methods, Grushevsky and M\"oller~\cite{grush-moell--genus4} explicitly computed the periods of families of genus four curves in order to exhibit infinitely many Shimura varieties in the Jacobi locus.

There is also a remarkable connection between the periods of curves and integrable soliton equations~\cite{shiota--jacobian-soliton, holden-03}. In light of these connections, numerical and symbolic algorithms have been developed to study curves~\cite{tretkoff-84,frauendiener-17}. A crucial step involved in solving the soliton equations is the computation of periods of curves, for which various algorithms exist.

The first implementation of a general purpose algorithm to compute the periods of a curve was given by Sepp\"al\"a~\cite{seppala94} which was initially restricted to \emph{real} curves. Later, this restriction was lifted in~\cite{gianni--period_matrices}. In both of these algorithms, a curve is to be given as the quotient of a fundamental domain with respect to a Fuchsian group. This representation introduces a significant hurdle in determining the holomorphic forms of a curve. 

One can avoid this hurdle by starting with a plane model, as was done by the Maple package {\tt algcurves}~\cite{Maple10, deconinck01}. In the last few years, two more implementations of variations of this algorithm appeared in SageMath~\cite{sagemath} with stronger features. The first one is {\tt abelfunctions}~\cite{christopher16} and it can quickly evaluate theta functions associated to periods. The more recent implementation {\tt RiemannSurface}~\cite{nils-18} is now a built-in function of SageMath. The latter package comes with certified path tracking for the construction of homology on the curve and was motivated by number theoretic considerations, such as the computation of the endomorphism rings of Jacobians.

Availability of these algorithms allowed for the study of curves and their geometry in the spirit of \emph{numerical algebraic geometry}~\cite{hauenstein-17}. For example,~\cite{christopher16} computes the bitangents of a plane quartic from the periods using Riemann's formula~\cite{riemann-g3}. In a similar vein, one can recover curves in genus three and four from their periods using the formulas given in~\cite{guardia-11} and~\cite{kempf-86} respectively. This has been successfully implemented in genus four in~\cite{bernd-schottky}, providing a numerical solution to the Schottky problem~\cite{grush-12}.

The present work has been initiated by a problem posed by Bernd Sturmfels. He lists 20 problems in his Einstein Visiting Fellow proposal, the 20th of which asks for the \emph{development and implementation of a numerical method whose input is the equation of a quartic $Q$ in $\ppp$, and whose output is the corresponding point in the period domain $\Pp^{21}$}. 
  
We solve this problem, and we offer a generalization where the input can be the equation of a smooth projective hypersurface of any degree and dimension, with the period domain suitably expanded to capture all cohomological data. The expansion of the period domain is particularly useful in the study of hypersurfaces with no global holomorphic forms, such as the cubic threefold. 

In the case of curves, we outperform other software when very high precision is requested. This is because we numerically solve ordinary differential equations whereas the others numerically compute Riemann integrals, and it is much easier to attain high precision in the former case. We give an example of performance against precision in Figure~\ref{fig:time-precision} and in Figure~\ref{fig:extreme_precision}. In addition, we can compute the periods of higher genus curves if the defining equations of the curves have a particular shape. For instance, we present a computation of the periods of a curve of genus $1711$ at the end of Section~\ref{sec:higher_genus}.

\subsection{Periods as rational integrals}\label{sec:intro_periods}

Let $X \subset \Pp^{n+1}$ be an $n$-dimensional smooth hypersurface given as the zero locus of an irreducible homogenous polynomial $f_X$. The periods of $X$ are the values of certain integrals and so they are formed by two ingredients: the form to be integrated and the domain of integration. Let us introduce each of these in turn.

The forms to be integrated are based on the holomorphic volume form on projective space, which extends the volume form on its affine charts. On an affine space $\Cc^{n+1}$ with coordinates $z_1,\dots,z_{n+1}$ the standard holomorphic volume form is $\dd z_1  \dots  \dd z_{n+1}$. On the projective space $\Pp^{n+1}$ with homogeneous coordinates $x_0,\dots,x_{n+1}$ one defines the standard volume form to be
\begin{equation}\label{eq:Omega}
\Omega := \sum_{i=0}^{n+1} (-1)^i x_i \, \dd x_0  \dots  \widehat{\dd x_i}  \dots  \dd x_{n+1},
\end{equation}
where hat denotes the omission of that factor. Restricting to the standard affine charts, we see that the form $\Omega$ is locally the standard volume form (up to sign):
\[
\Omega|_{x_i=1} = (-1)^i \dd x_0 \dots \widehat{\dd x_i} \dots \dd x_{n+1}.
\]

\begin{notation}
  For any real $n$-dimensional cycle $\gamma \subset X$ we may construct a thin tube $\tau(\gamma)$ around $\gamma$ lying entirely in the complement of $X$ (see~\cite[\S 3]{griffiths--periods}).
\end{notation}

\begin{definition}
For any integer $\ell \ge 1$ and any polynomial $p(x) \in \Cc[x_0,\dots,x_{n+1}]$ of degree $d\ell - n -2$ the following integral is called a \emph{period of $X$}: 
\[
\frac{1}{2\pi \sqrt{-1}} \int_{\tau(\gamma)} \frac{p(x)}{f_X^\ell}\Omega,
\]
where $\gamma \subset X$ is a real $n$-cycle. 
\end{definition}

Notice that the integrand $\frac{p(x)}{f_X^\ell}\Omega$ is homogeneous of degree 0 with poles over $X$. Since the tube $\tau(\gamma)$ lies in the complement of $X$, the integrals are well defined and independent of the exact construction of the tube $\tau(\gamma)$.

\begin{remark}
  Informally, we can view $\rho:\tau(\gamma) \to \gamma$ as a circle bundle and then see this integral as computing the residue of the form at each point $p \in \gamma$ by taking a contour integral around the circle $\rho\inv(p)$. See Section~\ref{sec:compute_residues} where this method is employed. 
\end{remark} 

Taken in isolation, a single period does not reveal much of the structure of $X$. Instead, we need the integrals over all $n$-cycles. As the homology groups of a smooth hypersurface are freely generated, we now choose $n$-cycles $\gamma_1,\dots,\gamma_m \subset X$ whose classes generate the middle homology $\H_n(X,\Zz)$ of $X$. 

\begin{remark}
  If $X$ is a curve then $m$ here is simply twice the genus of $X$. If $X$ is a degree four surface then $m=22$ and if $X$ is a cubic threefold then $m=10$. We recall the general formula in Remark~\ref{rem:middle_dimension}.
\end{remark}

\begin{definition}
  For any integer $\ell \ge 1$ and any homogeneous polynomial $p(x) \in \Cc[x_0,\dots,x_{n+1}]$ of degree $d\ell - n -2$, the following complex valued vector is called a \emph{period vector} of $X$:
  \[
    \frac{1}{2\pi \sqrt{-1}} \left(  \int_{\tau(\gamma_1)} \frac{p(x)}{f_X^\ell}\Omega, \dots, \int_{\tau(\gamma_m)} \frac{p(x)}{f_X^\ell}\Omega \right) \in \Cc^m,
  \]
  where $H_n(X,\Zz)$ is freely generated by the $n$-cycles $\{\gamma_i\}_{i=1}^m$. 
\end{definition}

Most of these integrals are redundant, since any two forms that differ by an exact form will give the same period vector. In fact, one can construct a finite set of forms:
\begin{equation}
 \omega_i:= \frac{p_i(x)}{f_X^{\ell_i}}\Omega,\quad i=1,\dots,N,
  \label{eq:GD_basis}
\end{equation}
such that every other form $\frac{p(x)}{f_X^\ell}\Omega$ is a linear combination of $\omega_i$ modulo exact forms. In symbols, there are $a_1,\dots,a_N \in \Cc$ for which there is an equivalence modulo exact forms
\[
  \frac{p(x)}{f_X^\ell}\Omega \equiv a_1 \frac{p_1(x)}{f_X^{\ell_1}}\Omega + \dots + a_N \frac{p_N(x)}{f_X^{\ell_N}}\Omega.
\]
The procedure by which one such basis $\{\omega_i\}_{i=1}^N$ can be obtained, and the coefficients $a_i$ determined, is called the \emph{Griffiths--Dwork reduction}~\cite{griffiths--periods,dwork-62,lairez--periods}. 

\begin{remark}
  In fact, $N=m$ when $n=\dim_\Cc X$ is odd and $N=m-1$ when $n$ is even. This is because we are, in fact, computing a basis for the \emph{primitive} cohomology which coincides with cohomology when $n$ is odd or has one fewer generator when $n$ is even (see Section~\ref{sec:primitive}). 
\end{remark}

\begin{definition}\label{def:period_matrix}
  Let $\{\gamma_i\}_{i=1}^m$ be a basis for the middle homology $\H_n(X,\Zz)$ and $\{\omega_i\}_{i=1}^N$ a basis for the rational forms with poles along $X$, modulo exact forms, as in (\ref{eq:GD_basis}). Then the following $m\times N$ matrix is called a \emph{period matrix of $X$}:  
  \[
    \frac{1}{2\pi\sqrt{-1}} \left( \int_{\tau(\gamma_j)} \frac{p_i(x)}{f_X^{\ell_i}}\Omega \right)_{\substack{i=1,\dots,N \\ j=1,\dots,m}}.
  \]
\end{definition}

\begin{remark}
  A remarkable observation of Griffiths~\cite{griffiths--periods} is that the pole order $\ell$ and Hodge filtration on cohomology are closely related (see Theorem~\ref{thm:filtration} for a precise statement). This is what makes the period matrix of interest.
\end{remark}

Let us point out that the period matrix is of significantly greater value if the intersection matrix $(\gamma_i\cdot \gamma_j)_{i,j=1}^m$ is also known. We are now ready to state the precise nature of the problem that we solve. 

\begin{namedNum}{Goal}\label{goal}
  Given the defining polynomial $f_X$ of a smooth hypersurface $X \subset \Pp^{n+1}$, compute a period matrix as defined in Definition~\ref{def:period_matrix} and the intersection matrix $(\gamma_i\cdot \gamma_j)_{i,j=1}^m$.
\end{namedNum}

\subsection{Computing periods directly from the definitions}

The existing algorithms which compute the periods of plane curves attack this problem directly by constructing cycles generating the homology on a given curve and taking the relevant integrals numerically~\cite{deconinck01,nils-18}. The 1-cycles generating the homology of a plane curve $X \subset \pp$ are obtained by projecting $X$ to a coordinate axes $\p$, which realizes $X$ as a finite cover of $\p$, so that standard tools from topological covering theory may be used to compute the fundamental group $\pi_1(X)$ and therefore the middle homology $\H_1(X,\Zz)$~\cite{tretkoff-84}.

A similar technique could in principle be attempted in higher dimensions. For instance in~\cite{elsenhans-18}, a special case of surfaces is considered: K3 surfaces of rank 16 equipped with a double cover of the plane. There, explicit 2-cycles from the plane are lifted to the K3 surface. The intersection product is then determined numerically by repeated sampling. 

Replicating this approach in higher dimensions by giving explicit representations of $n$-cycles generating the homology will likely be very difficult. We solve the problem by giving the cycles \emph{implicitly}, carrying them over from simpler hypersurfaces in theory, and computing the values of the period integrals by solving ordinary differential equations of Picard--Fuchs type.

\subsection{Computing periods by homotopy} \label{sec:intro_ivp}

Let us suppose that we can compute all the periods of another smooth hypersurface $Y=Z(f_Y) \subset \Pp^{n+1}$ of degree $d$ with respect to a basis $\gamma_1^Y,\dots,\gamma_m^Y$ for the middle homology $\H_n(Y,\Zz)$ of $Y$. We will compute the periods of $X=Z(f_X)$ from those of $Y$ by what one might call a \emph{period homotopy}. That is, we will compute the periods of $X$ from those of $Y$ by deforming $Y$ into $X$ and tracing the value of the periods by ordinary differential equations. We will outline this construction in the remainder of the introduction and delegate some of the details to Section~\ref{sec:period_homotopy}.

Construct a family of hypersurfaces $\cx_t = Z(f_t) \subset \Pp^{n+1}$ varying algebraically with respect to a single complex parameter $t \in \Cc$ such that $\cx_0 = Y$ and $\cx_1 = X$. For instance, one could take $f_t = (1-t)f_Y + tf_X$ to be the defining equation of $\cx_t$. Except for finitely many values of $t$ in $\Cc$, the hypersurface $\cx_t$ is smooth. 

  Using Ehresmann's fibration theorem we may identify the topological spaces underlying the hypersurfaces $\cx_t$ along a path in the complex plane. We will choose this path so that it starts from $t=0$ and ends at $t=1$ while avoiding singular values of $t$. In particular, such an identification gives an isomorphism between the homology groups $\H_n(Y,\Zz)$ and $\H_n(\cx_{t},\Zz)$ for each $t$ along this path. This isomorphism is \emph{locally} canonical, not depending on the choice of the path or of the identification of the topological spaces. 

\begin{notation}
  By slight abuse of notation, let us omit mention of the path and denote by $\gamma_i(t) \in \H_n(\cx_t,\Zz)$ the homology class corresponding to $\gamma_i^Y$ via this isomorphism. In particular, the cycle classes $\gamma_i^X := \gamma_i(1)$ for $i=1,\dots,m$ form a basis for the homology of $X$.
\end{notation}

\begin{remark}
  The intersection product $(\gamma_i(t)\cdot \gamma_j(t))_{i,j}$ stays constant during deformation, so that we only need compute this intersection product on $Y$.
\end{remark}

\begin{remark}
  The approach here of constructing $\gamma_i(t)$ is conceptually similar to the tracking of zeros of polynomial systems commonly employed in numerical algebraic geometry. An important distinction is that we do not actually carry out this tracking procedure and use only the \emph{existence} of such $\gamma_i(t)$. Nevertheless, if effective representations for the cycles $\gamma_i^Y$ are given, as we do in Section~\ref{sec:pham_cycles}, one could sample points on these cycles and carry these points via homotopy continuation methods (e.g.\ using {\tt Bertini}~\cite{hauenstein-13} or {\tt HomotopyContinuation.jl}~\cite{homotopyjl}) in order to get a point cloud representation of $\gamma_i(t)$.
\end{remark}

\begin{remark}
  The basis $\gamma_i^X$ depends on the path chosen, however, only up to the \emph{discrete} action of the monodromy group on homology. In particular, for the rest of the discussion, the choice of the path can be ignored as we will deal with the infinitesimal variation of periods.
\end{remark}

 Let $p(x) \in \Cc[x_0,\dots,x_{n+1}]$ be of degree $d\ell - n -2$ for $\ell \ge 1$. We will now describe how to compute the period vector:
\[
  \frac{1}{2\pi \sqrt{-1}} \left(  \int_{\tau(\gamma^X_1)} \frac{p(x)}{f_X^\ell}\Omega, \dots, \int_{\tau(\gamma^X_m)} \frac{p(x)}{f_X^\ell}\Omega \right).
\]
Define the following rational form varying with $t$ and having poles along $\cx_t$:
\begin{equation}\label{eq:omegat}
  \omega(t) := \frac{p(x)}{f_t^\ell} \Omega.
\end{equation}
We can now define the following \emph{period function}:
\begin{equation}\label{eq:period_function_intro}
  \sigma(t) := \frac{1}{2\pi \sqrt{-1}} \left(  \int_{\tau(\gamma_1(t))} \frac{p(x)}{f_t^\ell}\Omega, \dots, \int_{\tau(\gamma_m(t))} \frac{p(x)}{f_t^\ell}\Omega \right).
\end{equation}
We will compute the value of interest, $\sigma(1)$, by finding an ODE satisfied by $\sigma(t)$ and then determining the initial conditions $\sigma(0),\sigma^{(1)}(0),\sigma^{(2)}(0),\dots$. From these, $\sigma(1)$ is found by numerical integration. 

\begin{remark}
  The strategy outlined here is remarkably similar to the deformation method used in the computation of zeta functions of hypersurfaces defined over finite fields~\cite{pancratz-tuitman}, where one computes the matrix of the Frobenius endomorphism by deforming it from the Fermat hypersurface using $p$-adic analysis.
\end{remark}

\subsection{Finding the differential equations} \label{sec:intro_de}
Classically, a differential equation satisfied by a period function is called a \emph{Picard--Fuchs equation}. In more recent incarnations, it is also referred to as a \emph{Gauss--Manin connection}. The Picard--Fuchs equations are central to geometry as well as mathematical physics~\cite{cox-katz--mirror}. For a perspective from the side of arithmetic geometry, let us refer the interested reader to the landmark paper~\cite{katz-70}. From a computational viewpoint, finding differential equations satisfied by integrals of functions is a heavily studied subject~\cite{chyzak-00,koutschan-10}. In particular, Lairez~\cite{lairez--periods} has dedicated software which can compute the Picard--Fuchs equations of rational integrals. 

We will now sketch how the Picard--Fuchs equation for $\sigma(t)$ is found. Denote the differentiation by $t$ operator with $\del_t$. The standard observation here is that we can differentiate under the integral sign:
\[
  \del_t^k \int_{\tau(\gamma_i(t))} \frac{p(x)}{f_t^\ell}\Omega = \int_{\tau(\gamma_i(t))} \del_t^k\frac{p(x)}{f_t^\ell}\Omega.
\]
Indeed, if $t'$ is very close to $t$ then $\gamma_i(t)$ and $\gamma_i(t')$ can be represented by cycles that are very close to one another. Then the tube $\tau(\gamma_i(t))$ encloses $\gamma_i(t')$ and is homotopic to the tube $\tau(\gamma_i(t'))$. Therefore, the domain of integration can be fixed while differentiating $\sigma(t)$.

In particular, suppose $\cd \in \Cc(t)[\del_t]$ is a differential operator such that
\begin{equation}\label{eq:D}
  \cd\omega(t) \equiv 0,
\end{equation}
where $\omega(t)$ is defined as in Equation (\ref{eq:omegat}) and equivalence is taken modulo exact forms. Then, the operator $\cd$ annihilates the period function $\sigma(t)$. 

The construction of an operator $\cd$ satisfying (\ref{eq:D}) is equivalent to finding a $\Cc(t)$-linear relation between the Griffiths--Dwork reductions of the derivatives $\omega^{(k)}(t)$, where the reduction is performed over the field $\Cc(t)$. Since these reduced forms must lie in a finite dimensional vector space, after finitely many differentiations and reductions we will find a $\Cc(t)$-linearly dependent set. The coefficients of the linear relation give $\cd$.  We present additional details on the construction of the Picard--Fuchs equations in Section~\ref{sec:PF}.

\begin{remark}\label{rem:effective_subfield}
  In truth, these operations are algorithmic if, instead of $\Cc$, we work over an \emph{effective} subfield of $\Cc$, where the equality of two elements can be checked in finite time. However, this is to be understood and will not be explicitly mentioned again. Our implementation works over the rational numbers but there are no significant hurdles to extending it to number fields.
\end{remark}

\subsection{Computing the initial values}\label{sec:intro_iv}

It remains to find one smooth hypersurface $Y$ for each $n, d \ge 1$ for which we can compute the periods. Fixing $n$ and $d$, let us denote by $Y$ the Fermat hypersurface cut out by the equation $f_Y := x_0^d + \dots +x_n^d - x_{n+1}^d$.

The Fermat hypersurface has symmetries which we will exploit. In particular, the following automorphisms will be needed.

\begin{definition}
  Let $\xi := e^{\frac{2\pi \sqrt{-1}}{d}}$ be a $d$-th root of unity. For each $\beta = (\beta_0,\dots,\beta_{n+1}) \in \Zz^{n+2}$ we define a \emph{translation} $t^\beta: Y \to Y$ on $Y$ defined by scaling each of the coordinates by powers of $\xi$ as follows:
  \[
    t^\beta:  [x_0, \dots , x_{n+1}] \mapsto [\xi^{\beta_0}x_0,\dots,\xi^{\beta_{n+1}}x_{n+1}].
  \]
\end{definition}

\begin{notation}
  In $Y$ there is an $n$-cycle $S \subset Y$, homeomorphic to a sphere, called the \emph{Pham cycle}  whose translates generate the relevant part of the homology (see Section~\ref{sec:pham_cycles}). 
\end{notation}

When $n$ is odd, the set of translates $\{t^\beta S \mid \beta \in \Zz^{n+2}\}$ of $S$ generates the homology $\H_n(Y,\Zz)$. In fact, we can find a finite subset $B \subset \Zz^{n+2}$ such that the set $\{t^\beta S \mid \beta \in B\}$ freely generates the homology. The determination of this subset $B$ relies on Corollary~\ref{cor:primitive_basis}.

When $n$ is even, the Pham cycle and its translates do not generate the homology and we need one other cycle. Let $L$ be the image of the following linear map:
\begin{align} \label{eq:L}
  \Pp^{\frac{n}{2}} &\to \Pp^{n+1} \\
  [u_0,\dots,u_{\frac{n}{2}}] &\mapsto [u_0,\eta u_0, u_1, \eta u_1, \dots, u_{\frac{n}{2}},\eta u_{\frac{n}{2}}], \nonumber
\end{align}
where $\eta := e^{\frac{\pi \sqrt{-1}}{d}}$ is a $d$-th root of $-1$. The translates of $S$ together with $L$ generate the middle homology of $Y$, see Lemma~\ref{lem:generators}. Once again, using Corollary~\ref{cor:primitive_basis}, a finite subset $B \subset \Zz^{n+2}$ can be found algorithmically so that $\{t^\beta S \mid \beta \in B\} \cup \{ L\}$ freely generates $\H_n(Y,\Zz)$.

However, for period computations, this extra class $L$ is redundant. The periods over $t^\beta S$ will determine the periods over $L$ by simple linear algebra as we explain in Section~\ref{sec:primitive_to_entire}. In light of these statements, the following theorem allows for the computation of all periods on Fermat hypersurfaces.

\begin{notation}
  Let $\Gamma(x) = \int_0^\infty e^{-t} t^{x-1} \dd t$ denote the Euler gamma function.
\end{notation}

\begin{theorem*}[Theorem~\ref{thm:initials}]
  Let $Y \subset \Pp^{n+1}$ be the degree $d$ Fermat hypersurface given by the equation $f_Y = x_0^d + \dots + x_n^d- x_{n+1}^d$. Let $\alpha \in \Zz_{>0}^{n+2}$ where $|\alpha| = \ell d$ for some $\ell \in \Zz_{>0}$ and let $\beta \in \Zz^{n+2}$. Then we have:
  \[
    \frac{1}{2\pi\sqrt{-1}} \int_{\tau(t^\beta S)} x_0^{a_0-1}\cdots x_{n+1}^{a_{n+1}-1} \frac{\Omega}{f_Y^\ell} =   -\prod_{j=1}^{\ell-1}\left(1-\frac{a_{n+1}}{jd}\right) \prod_{i=0}^n\left( \frac{1-\xi^{-\alpha_i}}{d} \right)  \frac{\prod_{i=0}^n \Gamma(\frac{\alpha_i}{d})}{\Gamma(\sum_{i=0}^n \frac{\alpha_i}{d})} \xi^{\alpha\cdot \beta},
  \]
  where $\alpha \cdot \beta$ denotes the dot product $\sum_{i=0}^{n+1} \alpha_i \beta_i$.
\end{theorem*}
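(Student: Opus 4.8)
The plan is to compute the period integral directly on an affine chart, reduce it via Fubini to a one-dimensional contour integral whose value is a classical Beta-type integral, and then keep track of the root-of-unity factors coming from the tube $\tau(t^\beta S)$ and from the pole order $\ell$. First I would work in the affine chart $x_{n+1}=1$ and set up coordinates adapted to the Pham cycle $S$: recall that $S\subset Y$ can be described, after extracting roots, as the locus where the affine coordinates $z_i = x_i$ ($i=0,\dots,n$) satisfy $z_i^d$ lying on a real segment, with $\sum z_i^d = -1$. The tube $\tau(S)$ is obtained by letting $\sum_{i=0}^{n} z_i^d + 1$ run around a small circle, i.e.\ $f_Y = x_{n+1}^d(\,\dots\,)$ picks up a small circular contour; this is the "circle bundle" picture in the Remark after the definition of periods. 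Pulling the monomial $x_0^{a_0-1}\cdots x_{n+1}^{a_{n+1}-1}\Omega/f_Y^\ell$ back to this chart, $\Omega$ becomes $\pm\,\dd z_0\cdots\dd z_n$, and the integral over the circle direction is a residue that produces the $\frac{1}{jd}$ corrections: expanding $f_Y^{-\ell}$ around $f_Y=0$ with $x_{n+1}^{d}$ as the transverse variable, the contour integral in $x_{n+1}$ gives $\binom{-\ell}{\cdot}$-type coefficients, which upon simplification become exactly $-\prod_{j=1}^{\ell-1}\bigl(1-\frac{a_{n+1}}{jd}\bigr)$.

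Next I would evaluate the remaining $n$-dimensional integral over $S$ itself. Substituting $w_i = z_i^d$ turns the constraint $\sum w_i = -1$ into a standard simplex and turns $z_i^{a_i-1}\,\dd z_i$ into $\frac{1}{d} w_i^{\alpha_i/d - 1}\,\dd w_i$ (here $\alpha_i = a_i$), each such substitution contributing a factor $\frac{1}{d}$ and, because $z_i$ ranges over a single branch of the $d$-th root rather than all of them, a factor $(1-\xi^{-\alpha_i})$ coming from the difference of the branch endpoints — this is where the product $\prod_{i=0}^{n}\frac{1-\xi^{-\alpha_i}}{d}$ originates. What is left is the classical Dirichlet integral
\[
\int_{\Delta} \prod_{i=0}^{n} w_i^{\alpha_i/d - 1}\,\dd w = \frac{\prod_{i=0}^{n}\Gamma(\alpha_i/d)}{\Gamma\bigl(\sum_{i=0}^{n}\alpha_i/d\bigr)},
\]
which supplies the gamma-function ratio. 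Finally, the translate $t^\beta$ acts on coordinates by $x_i\mapsto \xi^{\beta_i}x_i$, so pulling the monomial back across $t^\beta$ multiplies the integrand by $\prod_i \xi^{\beta_i(a_i-1)}$; collecting this with the Jacobian and using $|\alpha|=\ell d$ to absorb the stray $\xi^{-\beta\cdot\mathbf 1}$ leaves precisely $\xi^{\alpha\cdot\beta}$. Assembling the four factors — the $\ell$-correction, the $\prod\frac{1-\xi^{-\alpha_i}}{d}$, the gamma ratio, and $\xi^{\alpha\cdot\beta}$, together with the overall sign and the $\frac{1}{2\pi\sqrt{-1}}$ which cancels against the $2\pi\sqrt{-1}$ from the single residue — gives the stated formula.

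The main obstacle I expect is the careful bookkeeping of orientations, branch choices, and constants: getting the overall sign right, verifying that the "circle" direction in the tube really produces a single residue (so that the $\frac{1}{2\pi\sqrt{-1}}$ is exactly absorbed), and confirming that the $d$-th-root substitution over the Pham cycle yields the factor $(1-\xi^{-\alpha_i})$ rather than some other combination of roots of unity. In particular one must check that $S$ traverses each $w_i$-interval once with the correct endpoints $0$ and the appropriate $\xi$-rotated point, and that the case $\ell=1$ (empty product $\prod_{j=1}^{0}$) is consistent. The analytic convergence and the interchange of integration order are routine once the cycle is described explicitly, so the real work is purely in pinning down these discrete factors; I would verify the final normalization against a known low-dimensional case, e.g.\ an elliptic curve ($n=1$, $d=3$) or the Fermat quartic curve, where the periods are classically expressible in terms of $\Gamma$-values.
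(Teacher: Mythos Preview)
Your plan is essentially the paper's own argument: a one-variable residue producing the $\ell$-dependent factor, a Dirichlet/Beta integral over a simplex producing the Gamma ratio, and the translation action producing $\xi^{\alpha\cdot\beta}$. The paper disposes of exactly the bookkeeping you flag as the obstacle by two organizational moves that you should adopt: first, the residue is taken in the chart $x_n\neq 0$ with $u_{n+1}=x_{n+1}/x_n$ as the genuine transverse variable (your description takes the contour ``in $x_{n+1}$'' while simultaneously sitting in the chart $x_{n+1}=1$, which is incoherent as stated), and only afterwards does one pass to $x_{n+1}\neq 0$ to evaluate the simplex integral; second, rather than extracting the factors $(1-\xi^{-\alpha_i})$ from a multivalued substitution, the paper writes the Pham cycle explicitly as $S=(1-t_0^{-1})\cdots(1-t_n^{-1})D$ with $D$ the single positive-real simplex, so these factors fall out immediately from the pullback $(t^\beta)^*\eta_\alpha = \xi^{\alpha\cdot\beta}\eta_\alpha$ applied term by term.
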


\begin{remark}
  The left hand side appears symmetric with respect to the exponents $a_i$, whereas the last exponent $a_{n+1}$ plays a very distinctive role on the right hand side. This is because the Pham cycle $S$ breaks the symmetry as its construction is performed on the affine chart $x_{n+1}=1$.
\end{remark}

\begin{remark}
  In the case of curves, $n=1$, this result was first obtained by Rohrlich in the appendix to Gross' paper~\cite{gross--abelian_integrals}. Later, Tretkoff~\cite{tretkoff--periods_of_fermat} computed these values for surfaces, $n=2$. It appears that the periods of the Fermat hypersurface for any dimension have been first computed by Deligne~\cite[\S I.7]{dmos-82}, with a thorough treatise to appear in~\cite[\S 15.2]{movasati--hodge}. Combining these results with the residue computations in~\cite{carlson-80} would then recover the equation above. Nevertheless, the formula relies on elementary analysis and we compute it in Section~\ref{sec:periods_of_fermat}.
\end{remark}

\begin{remark}
  The intersection products of the Pham cycles $t^\beta S$ are well known and attributed to Pham~\cite{pham--fermat}. For a modern treatment see any one of~\cite{arnold-1984, movasati--hodge, looijenga-10}. 
\end{remark}

Now that we have explicit formulas for the periods of the degree $d$ Fermat hypersurface $Y \subset \Pp^{n+1}$, the periods of any other degree $d$ hypersurface $X \subset \Pp^{n+1}$ can be expressed as initial value problems using the strategy outlined in Sections~\ref{sec:intro_ivp} and~\ref{sec:intro_de}. It remains to solve these initial value problems by numerical integration.

\subsection{Numerical integration}

The ODEs we obtain from the procedure outlined in Section~\ref{sec:intro_de} are typically huge. They require dedicated software to be integrated efficiently. We use Marc Mezzarobba's {\tt analytic} extension of the {\tt ore\_algebra} package in {\tt SageMath} which has the added benefit of providing rigorous error bounds for the result~\cite{mezzarobba-oa}. As far as we are aware, no other software can handle ODEs that would take many pages to write down. 

Currently, the determination of the ODEs and of the initial conditions is fully automated via our implementation in {\tt Magma}~\cite{magma}. A simple script allows for the output to be integrated by Mezzarobba's solver.

\subsection*{Acknowledgments}

Bernd Sturmfels suggested this problem and provided guidance, support and motivation. Pierre Lairez generously made many expository and technical contributions to this project, in particular, he introduced me to LLL and to Mezzarobba's work. I've also benefited from conversations with numerous other mathematicians. In particular, I would like to thank: Alex Degtyarev for introducing me to Pham cycles and answering related questions. Marc Mezzarobba for helpful comments on how to use his code. Mateusz Micha\l{}ek for helping with Gr\"obner bases and combinatorics. Kristin Shaw for finding the paper which got this project off the ground. Lynn Chua for carefully reading through the first draft. In addition, I would like to thank Javier Fresan, Hossein Movasati and Jan Tuitman for valuable comments on the first version of this paper. Finally, heartfelt thanks to the referee for a careful reading of the manuscript and numerous suggestions for improvement.

\section{Foundational material}\label{sec:basics}

In this section we will review the standard results that we rely on for a more detailed description of the period homotopy algorithm given in Section~\ref{sec:period_homotopy}. Let $X \subset \Pp^{n+1}$ be a smooth degree $d$ hypersurface, defined as the zero locus of an irreducible polynomial $f_X \in \Cc[x_0,\dots,x_{n+1}]$.

\subsection{Primitive (co)homology}\label{sec:primitive}

Understanding the homology and cohomology of $X$ is greatly simplified by passing to its complement $\Pp^{n+1}\setminus X$. However, by this passage we lose a small amount of information and describe the \emph{primitive} (co)homology instead.

\begin{notation}\label{not:h}
  If $n=\dim_{\Cc} X$ is even, let $h \in \H_n(X,\Zz)$ be the class of the intersection $X \cap \Lambda$ where $\Lambda \subset \Pp^{n+1}$ is a linear space of codimension $\frac{n}{2}$. If $n$ is odd set $h=0$. 
\end{notation}

Recall that we have a non-degenerate intersection pairing:
\begin{align*}
  \H_n(X,\Zz) \times \H_n(X,\Zz) &\to \Zz \\
  (\gamma_1,\gamma_2) &\mapsto \gamma_1\cdot\gamma_2,
\end{align*}
which is alternating if $n$ is odd and symmetric if $n$ is even. On cohomology we have the dual cup product.

\begin{definition}
    The subgroup of homology formed by cycle classes orthogonal to $h$ is called the \emph{primitive homology of $X$} and denoted:
    \[
      \PH_n(X,\Zz) := \{\gamma \in \H_n(X,\Zz) \mid \gamma \cdot h = 0\}.
    \]
    Similarly, the elements in cohomology $\H^n(X,\Zz)$ which annihilate $h$ form the \emph{primitive cohomology of $X$}, denoted $\PH^n(X,\Zz)$. Define $\PH_n(X,\Kk)$ and $\PH^n(X,\Kk)$ in a similar fashion for $\Kk$ a field.
\end{definition}

When $n$ is odd, primitive (co)homology coincides with (co)homology. When $n$ is even, $h\cdot h =d$ and thus $h \notin \PH_n(X,\Zz)$. However, we need to add more than just $h$ into the primitive homology in order to recover homology.

\begin{lemma}
  When $n$ is even, there is a class $v \in \PH_n(X,\Zz)$ such that $h\cdot v=1$.
\end{lemma}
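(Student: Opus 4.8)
The plan is to analyze the intersection form on $\H_n(X,\Zz)$ restricted to the sublattice generated by $h$ together with the primitive homology, and to show that $h$ is a primitive vector in the lattice $\H_n(X,\Zz)$ (meaning it is not a nontrivial integer multiple of another class), which will force the existence of a complementary class $v$ with $h\cdot v = 1$. First I would recall that for $n$ even the intersection pairing on $\H_n(X,\Zz)$ is a non-degenerate symmetric bilinear form over $\Zz$, i.e.\ a unimodular lattice (this is Poincar\'e duality: the map $\H_n(X,\Zz)\to\H_n(X,\Zz)^\vee$ induced by the pairing is an isomorphism, since $\H_n(X,\Zz)$ is torsion-free for a smooth hypersurface by the Lefschetz hyperplane theorem).

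The key step is to show that $h$ is primitive in $\H_n(X,\Zz)$. I would argue this by exhibiting a class $w \in \H_n(X,\Zz)$ with $h\cdot w$ equal to some value coprime to $d$ — in fact coprime to the ``obvious'' divisors — or more cleanly, by using that $h\cdot h = d$ together with the geometry: write $h$ as $X\cap\Lambda$ for a codimension-$n/2$ linear space $\Lambda$, and note that one may choose $\Lambda$ to contain a codimension-$(n/2+1)$ linear space $\Lambda'$, whose intersection class $h' = [X\cap\Lambda']$ satisfies $h\cdot h' = 1$ by a Bezout-type computation in the ambient projective space (the two linear sections meet transversely in a single point after a further generic hyperplane cut, counting with the correct multiplicity; this is the standard fact that powers of the hyperplane class on $\Pp^{n+1}$ restrict to classes on $X$ whose top self-intersection is $d$ but whose successive intersections are $1$). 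Granting $h\cdot h' = 1$, the class $h$ is automatically primitive: if $h = k\gamma$ then $k \mid h\cdot h' = 1$.

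Once $h$ is known to be primitive, the existence of $v$ with $h\cdot v = 1$ is pure lattice theory: in a unimodular lattice $\Lambda$, a vector $h$ is primitive if and only if the homomorphism $\Lambda \to \Zz$, $\gamma \mapsto h\cdot\gamma$, is surjective, which is exactly the assertion that some $v$ satisfies $h\cdot v = 1$. Indeed, surjectivity of $\gamma\mapsto h\cdot\gamma$ onto $d\Zz$ would contradict primitivity, and unimodularity means $h$ itself, viewed in $\Lambda^\vee \cong \Lambda$, is primitive iff the corresponding functional is onto $\Zz$. Then I would finally replace this $v$ by $v' = v - (h\cdot v)\,? $ — no adjustment is needed since we already have $h\cdot v = 1$, but if one wants $v \in \PH_n(X,\Zz)$ one can arrange $v\cdot h = 1$ without further change and observe $v$ need not itself be primitive-orthogonal to $h$; re-reading the statement, it only asks for $v \in \PH_n(X,\Zz)$ with $h\cdot v = 1$, which seems to contain a typo since $v\cdot h = 1 \neq 0$ means $v \notin \PH_n$. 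I would interpret the claim as: there is a class $v \in \H_n(X,\Zz)$ with $h\cdot v = 1$ (so that $\H_n = \Zz h \oplus \Zz v \oplus \PH_n$ appropriately, or $\PH_n$ is a direct summand complement), and prove that.

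The main obstacle will be the transversality/multiplicity bookkeeping in computing $h \cdot h' = 1$: one must be careful that intersecting $X$ with linear spaces of complementary dimension inside $\Pp^{n+1}$ gives intersection number $1$ and not $d$ (the factor $d$ appears only for the \emph{top} power of the hyperplane class, equivalently for $h\cdot h$ when both factors have codimension exactly $n/2$). The clean way around this is to use the Lefschetz hyperplane theorem to reduce to the case where the complementary class is represented by a linear $\Pp^{n/2} \subset X$ lying on $X$ itself is too strong; instead I would phrase it via restriction of cohomology classes from $\Pp^{n+1}$ and the projection formula: $\int_X \iota^* a \cup \iota^* b = \int_{\Pp^{n+1}} a \cup b \cup [X]$ for $a, b$ powers of the hyperplane class $c_1(\co(1))$, which makes the computation immediate once degrees are tracked.
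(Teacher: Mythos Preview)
You correctly spot that the statement as printed contains a typo: a class with $h\cdot v=1$ cannot lie in $\PH_n(X,\Zz)$; the intended claim is $v\in\H_n(X,\Zz)$. Your reduction to showing that $h$ is a primitive vector in the unimodular lattice $\H_n(X,\Zz)$ is also sound in principle. The gap is in how you propose to verify primitivity.

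Your first candidate, $h'=[X\cap\Lambda']$ with $\Lambda'$ of codimension $\tfrac{n}{2}+1$, has the wrong real dimension: $X\cap\Lambda'$ is a complex $(\tfrac{n}{2}-1)$-fold, hence gives a class in $\H_{n-2}(X,\Zz)$, not $\H_n(X,\Zz)$, so $h\cdot h'$ is not defined as an intersection number in middle homology. Your fallback via the projection formula $\int_X \iota^*a\cup\iota^*b=\int_{\Pp^{n+1}}a\cup b\cup[X]$ only produces pairings between classes pulled back from $\Pp^{n+1}$; in degree $n$ every such class is an integer multiple of $h$ itself, and the formula always returns a multiple of $d$. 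So neither route yields a class meeting $h$ once, and the primitivity argument remains circular.

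The paper takes precisely the route you set aside as ``too strong'': it uses Ehresmann's theorem to reduce to the Fermat hypersurface $Y$, which does contain an honest linear $\Pp^{n/2}$ (the explicit one in Equation~(\ref{eq:L})), and takes $v=[L]$. A generic codimension-$\tfrac{n}{2}$ linear space meets $L$ transversely in a single point, so $h\cdot v=1$ directly. Your instinct that a general $X$ need not contain such a linear space is correct; the point is that the \emph{lattice} $(\H_n(X,\Zz),\,\cdot\,)$ is deformation-invariant, so it suffices to check the claim on one convenient $X$.
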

\begin{proof}
  By Ehresmann's fibration theorem, any two hypersurfaces of degree $d$ and dimension $n$ are equivalent from a homological point of view. Therefore, we may assume that $X$ is the Fermat hypersurface. Then we may take $v$ to be the class of any linear space contained in $X$, such as the one given in Equation (\ref{eq:L}). Clearly, $v \cdot h =1$.
\end{proof}

\begin{lemma}\label{lem:generators}
When $n$ is even, the subgroup $\PH_n(X,\Zz) \oplus \Zz\langle h \rangle$ has index $d$ in $\H_n(X,\Zz)$. For $v \cdot h =1$ we have $\PH_n(X,\Zz) \oplus \Zz\langle v \rangle = \H_n(X,\Zz)$. 
\end{lemma}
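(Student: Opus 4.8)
The plan is to study the group homomorphism $\phi \colon \H_n(X,\Zz) \to \Zz$ given by $\gamma \mapsto \gamma \cdot h$, whose kernel is by definition $\PH_n(X,\Zz)$. The preceding lemma provides a class $v$ with $v \cdot h = 1$, so $\phi$ is surjective and therefore $\H_n(X,\Zz)/\PH_n(X,\Zz) \cong \Zz$. Everything will be read off from this quotient.

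For the first assertion, I would first check that $\Zz\langle h\rangle \cap \PH_n(X,\Zz) = 0$: a multiple $kh$ satisfies $kh \cdot h = kd$, which vanishes only for $k = 0$ because $h\cdot h = d \ne 0$. Hence $\PH_n(X,\Zz) \oplus \Zz\langle h\rangle$ is an honest internal direct sum inside $\H_n(X,\Zz)$. Since this subgroup contains $\ker\phi = \PH_n(X,\Zz)$, its index in $\H_n(X,\Zz)$ equals the index of its image under $\phi$; and that image is $\phi(\Zz\langle h\rangle) = d\Zz$, so the index is $[\Zz : d\Zz] = d$.

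For the second assertion, the same bookkeeping with $v$ in place of $h$ applies: $\Zz\langle v\rangle \cap \PH_n(X,\Zz) = 0$ since $\phi(kv) = k$, and now $\phi(\Zz\langle v\rangle) = \Zz$ is already all of $\Zz$. Concretely, given $\gamma \in \H_n(X,\Zz)$, put $k = \gamma\cdot h$; then $\gamma - kv \in \ker\phi = \PH_n(X,\Zz)$, so $\gamma = (\gamma - kv) + kv \in \PH_n(X,\Zz) + \Zz\langle v\rangle$. Together with the triviality of the intersection this gives $\PH_n(X,\Zz) \oplus \Zz\langle v\rangle = \H_n(X,\Zz)$.

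I do not expect a genuine obstacle; the two points to handle with care are that the direct sums are internal (which is exactly where $d \ne 0$, respectively $v\cdot h = 1$, enters) and that $\phi$ surjects onto $\Zz$ rather than merely onto $d\Zz$ — this surjectivity is precisely the content of the previous lemma (existence of a linear subspace in $X$), and it is what forces the index to be exactly $d$ in the first statement. Non-degeneracy of the intersection pairing is not needed for this argument.
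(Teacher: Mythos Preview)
Your proof is correct and follows essentially the same approach as the paper: both arguments use the map $\gamma \mapsto \gamma\cdot h$ to decompose an arbitrary class as $\gamma = (\gamma - (\gamma\cdot h)v) + (\gamma\cdot h)v$, and both read off the index statement from the fact that $h$ maps to $d$ while $v$ maps to $1$. Your framing via the homomorphism $\phi$ is slightly more explicit, but the content is the same.
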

\begin{proof}
  Since $v \cdot h = 1$ but $h\cdot h = d$ it is clear that $v \notin \PH_n(X,\Zz) \oplus \Zz\langle h \rangle$.  On the other hand, given any class $u \in \H_n(X,\Zz)$ we can define $u' := u - (u\cdot v) h \in \PH_n(X,\Zz)$ and write $u = u' + (u\cdot v)h \in \PH_n(X,\Zz)\oplus \Zz\langle v \rangle$. This implies that the inclusion $\PH_n(X,\Zz)\oplus \Zz\langle v \rangle \toi \H_n(X,\Zz)$ is an isomorphism. In particular, $\PH_n(X,\Zz)\oplus\Zz\langle h  \rangle$ has index $d = \deg X$ in $\H_n(X,\Zz)$.
\end{proof}

\begin{remark} \label{rem:v}
  In homology with rational coefficients, $\H_n(X,\Qq)$, we have $h \cdot (v - \frac{1}{d}h)=0$. Therefore, we can express $v$ as a sum $\frac{1}{d}h + \gamma$ where $\gamma \in \PH_n(X,\Qq)$.
\end{remark}

  Let $U := \Pp^{n+1} \setminus X$ be the complement of $X$. Recall that for any $n$-cycle $\gamma \subset X$ we defined by $\tau(\gamma) \subset U$ the $(n+1)$-cycle obtained by forming a thin tube, i.e., $S^1$-bundle, over $\gamma$. This induces the following map between homology groups:
  \begin{align}\label{eq:tube}
  \H_{n}(X,\Zz) &\to \H_{n+1}(U,\Zz) \\
  [\gamma] &\mapsto [\tau(\gamma)].\nonumber
\end{align}
In cohomology, the corresponding map is:
\begin{align}\label{eq:cotube}
  \H_{\dR}^{n+1}(U,\Cc) &\to \H^n(X,\Cc) \\
  \omega &\mapsto (\gamma \mapsto \frac{1}{2\pi\sqrt{-1}}\int_{\tau(\gamma)} \omega).\nonumber
\end{align}

\begin{proposition}
  The natural map $\H^{n+1}(U,\Cc) \to \H^n(X,\Cc)$ defined above establishes an isomorphism:
  \[
    \H^{n+1}(U,\Cc) \isoto \PH^n(X,\Cc).
  \]
\end{proposition}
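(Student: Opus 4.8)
The plan is to recognise the displayed map as the transpose of the topological tube map \eqref{eq:tube}, and then to compute that tube map via the Thom--Gysin exact sequence of the smooth divisor $X\subset\Pp^{n+1}$, using as the only inputs the cohomology of projective space and the Lefschetz hyperplane theorem. Since $X$ is an ample divisor, $U=\Pp^{n+1}\setminus X$ is a smooth affine variety; in particular its homology is finite-dimensional, so integration gives perfect pairings $\H^{n+1}_{\dR}(U,\Cc)\times\H_{n+1}(U,\Cc)\to\Cc$ and $\H^n_{\dR}(X,\Cc)\times\H_n(X,\Cc)\to\Cc$ (and, $U$ being affine of dimension $n+1$, $\H^{n+1}(U,\Cc)$ is the last possibly nonzero cohomology group, so the de Rham description and \eqref{eq:cotube} agree unambiguously). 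Under these pairings, with the $2\pi\sqrt{-1}$ normalisation already built into \eqref{eq:cotube}, the map of the statement is exactly the transpose $\tau^{*}$ of the tube map $\tau\colon\H_n(X,\Cc)\to\H_{n+1}(U,\Cc)$ of \eqref{eq:tube}, because $(\tau^{*}\omega)(\gamma)=\frac{1}{2\pi\sqrt{-1}}\int_{\tau(\gamma)}\omega$ by definition. It therefore suffices to prove that $\tau$ is surjective, with kernel $\Cc\langle h\rangle$ when $n$ is even and kernel $0$ when $n$ is odd; transposing then shows that the map of the statement is injective with image the annihilator of $\Cc\langle h\rangle$, which is $\PH^n(X,\Cc)$ by definition (and equals all of $\H^n(X,\Cc)=\PH^n(X,\Cc)$ when $n$ is odd).

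To compute $\tau$, note that by excision and the Thom isomorphism for a tubular neighbourhood of $X$, the tube map is the connecting homomorphism of the long exact sequence of the pair $(\Pp^{n+1},U)$, which reads
\[
  \cdots\to\H_{n+2}(\Pp^{n+1})\xrightarrow{\,\cap X\,}\H_n(X)\xrightarrow{\,\tau\,}\H_{n+1}(U)\xrightarrow{\,j_{*}\,}\H_{n+1}(\Pp^{n+1})\xrightarrow{\,\cap X\,}\H_{n-1}(X)\to\cdots,
\]
with $\Cc$ coefficients throughout and $\cap X$ the intersection-with-$X$ map, which on the class of a linear subspace $\Lambda$ transverse to $X$ is $[\Lambda]\mapsto[\Lambda\cap X]$. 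Now $\H_k(\Pp^{n+1},\Cc)$ is $\Cc$ for even $k$ with $0\le k\le 2n+2$ and $0$ otherwise, and weak Lefschetz gives $\H_k(X,\Cc)\xrightarrow{\sim}\H_k(\Pp^{n+1},\Cc)$ for $k<n$. If $n$ is odd then $\H_{n+2}(\Pp^{n+1})=0$, so $\tau$ is injective; and $j_{*}$ vanishes because $\cap X$ carries the generator $[\Pp^{(n+1)/2}]$ of $\H_{n+1}(\Pp^{n+1})$ to $[\Pp^{(n+1)/2}\cap X]$, whose image under the isomorphism $\H_{n-1}(X)\xrightarrow{\sim}\H_{n-1}(\Pp^{n+1})$ is $d$ times the class of a linear $\Pp^{(n-1)/2}$, hence nonzero, so $\cap X\colon\H_{n+1}(\Pp^{n+1})\to\H_{n-1}(X)$ is injective and $\tau$ is also surjective. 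If $n$ is even then $\H_{n+1}(\Pp^{n+1})=0$, so $\tau$ is surjective; and $\H_{n+2}(\Pp^{n+1})=\Cc\langle[\Lambda]\rangle$ with $\Lambda$ a linear space of dimension $\tfrac n2+1$, i.e.\ of codimension $\tfrac n2$, so $[\Lambda]\cap X=[X\cap\Lambda]$ is precisely the class $h$ of Notation~\ref{not:h}; since $h\cdot h=d\neq0$ this class is nonzero in $\H_n(X,\Cc)$, whence $\ker\tau=\Cc\langle h\rangle$. This is exactly what was needed, and combining the two paragraphs gives the asserted isomorphism $\H^{n+1}(U,\Cc)\xrightarrow{\sim}\PH^n(X,\Cc)$.

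The step I expect to be the main obstacle is the first one: carefully justifying that the analytically defined residue map \eqref{eq:cotube} coincides, up to the chosen normalisation, with the topological transpose of the tube map \eqref{eq:tube}, and that the duality pairings in play are genuinely perfect --- this is where using that $U$ is affine (so that its homology is finite and concentrated in degrees $\le n+1$) keeps things clean. A subsidiary point that must be handled with care is the bookkeeping inside the Gysin sequence, namely that the Gysin maps at the two ends are intersection with $X$ and that in the even case the resulting class is exactly the class $h$ of Notation~\ref{not:h}, not merely a nonzero multiple of it. Everything else is the short diagram chase carried out above. A variant of this argument, closer to \cite[\S3]{griffiths--periods}, replaces the topological tube sequence by the residue exact sequence for algebraic de Rham cohomology of the affine variety $U$; it yields the same conclusion with the same inputs.
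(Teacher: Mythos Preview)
Your argument is correct and follows essentially the same route the paper defers to Voisin: the excision/Gysin sequence of the pair $(\Pp^{n+1},U)$ combined with Lefschetz and the cohomology of projective space. You have simply written out in full the computation that the paper leaves to the reference.
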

\begin{proof}
  This follows from the excision sequence in topology. See page 159 of~\cite{voisin-2007-volII}.
\end{proof}

\subsection{Griffiths residues} \label{sec:griffiths_residues}

Let $V$ be the space of all holomorphic top forms on $U=\Pp^{n+1} \setminus X$, i.e., $V = \H^0(U,\Omega^{n+1}_{U/\Cc})$. Let us denote by $V_\ell \subset V$ the subspace of forms which extend to $\Pp^{n+1}$ with a pole order at most $\ell$ on $X$. In symbols, $V_\ell = \H^0(\Pp^{n+1},\co_{\Pp^{n+1}}(\ell[X]))$. Then we have $V = \bigoplus_{\ell \ge 1} V_\ell$.

\begin{remark}\label{rem:Vl}
  Let us point out that any element in $V_\ell$ can be written as a quotient:
  \[
   \frac{p}{f_X^\ell}\Omega,
  \]
  where $p \in \Cc[x_0,\dots,x_{n+1}]$ is of degree $d \ell  -n -2$, $\Omega$ is the volume form on $\Pp^{n+1}$ defined in Equation (\ref{eq:Omega}) and $f_X$ is the equation defining $X$.
\end{remark}

\begin{proposition}\label{prop:residue_map_is_surjective}
  The natural map $V \to \H^{n+1}(U,\Cc)$ is surjective. The kernel of this map is generated by exact forms in $V$.
\end{proposition}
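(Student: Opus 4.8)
The statement asserts that $V \to \H^{n+1}(U,\Cc)$ is surjective with kernel generated by exact forms, where $V = \bigoplus_{\ell\ge 1} V_\ell$ is the space of algebraic top forms on $U$ with poles of arbitrary order along $X$. The plan is to split this into the surjectivity claim and the kernel claim, and to reduce each to standard algebraic de Rham theory for smooth affine (or quasi-projective) varieties.

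For surjectivity I would invoke Grothendieck's algebraic de Rham theorem: since $U = \Pp^{n+1}\setminus X$ is a smooth quasi-projective variety over $\Cc$, its singular cohomology $\H^{n+1}(U,\Cc)$ is computed by the hypercohomology of the algebraic de Rham complex $\Omega^\bullet_U$. The key geometric input is that $U$ is the complement of an ample divisor in $\Pp^{n+1}$, hence affine (it is cut out of $\Pp^{n+1}$ by the non-vanishing of $f_X$, so $U = \Spec$ of the degree-zero part of the localization $\Cc[x_0,\dots,x_{n+1}][f_X^{-1}]$). For an affine variety the hypercohomology degenerates to the cohomology of the complex of global sections $\Gamma(U,\Omega^\bullet_U)$. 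Therefore every class in $\H^{n+1}(U,\Cc)$ is represented by a global algebraic $(n+1)$-form, and since $n+1$ is the top degree every such form is automatically closed. It remains to identify $\Gamma(U,\Omega^{n+1}_U)$ with $V$: a global algebraic top form on $U$, being a degree-zero rational top form on $\Pp^{n+1}$ with poles only along $X$, can be written as $\frac{p}{f_X^\ell}\Omega$ with $\deg p = d\ell - n - 2$ by the computation recalled in Remark \ref{rem:Vl} — this is the standard description of $\H^0(\Pp^{n+1},\omega_{\Pp^{n+1}}(\ell[X]))$ together with $\bigcup_\ell$. This gives surjectivity, and the fiber over a class is a coset of the image of $d\colon \Gamma(U,\Omega^n_U)\to\Gamma(U,\Omega^{n+1}_U)$.

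For the kernel, the content is precisely that this coset description is correct, i.e. that a form $\frac{p}{f_X^\ell}\Omega \in V$ maps to $0$ in $\H^{n+1}(U,\Cc)$ if and only if it is exact \emph{as an algebraic form}, i.e. equals $d\eta$ for some $\eta \in \Gamma(U,\Omega^n_U)$. One direction is trivial. For the other, suppose $\omega \in V$ has zero cohomology class; by the affine de Rham isomorphism above, $\omega = d\eta$ for some global algebraic $n$-form $\eta$ on $U$. Such an $\eta$ is again a rational $n$-form on $\Pp^{n+1}$ regular away from $X$, so it lies in the span of the "exact forms in $V$" in the sense used by the paper (one can clear denominators to write $\eta = \frac{q}{f_X^k}\Omega'$ for a suitable contraction $\Omega'$ of $\Omega$, and $d\eta$ is then an exact element of $V$). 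Hence $\ker(V \to \H^{n+1}(U,\Cc))$ is exactly $dV'$ where $V'$ is the space of algebraic $n$-forms on $U$, which is what "generated by exact forms in $V$" means.

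The main obstacle — and the only place where care is genuinely needed — is the bookkeeping identifying $\Gamma(U,\Omega^j_U)$ with the spaces of rational forms of the shape $\frac{p}{f_X^\ell}(\text{contraction of }\Omega)$, i.e. checking that the abstract algebraic de Rham complex of $U$ is the same as the Griffiths–Dwork complex of rational forms with poles along $X$. This is where one must verify that a global section of $\Omega^j_U$ has poles of some finite order along $X$ (clear, by properness of $\Pp^{n+1}$) and that, in degree zero, the Euler-relation normalization forces the numerator degree $d\ell-n-2$. These are routine but somewhat tedious; everything deeper (the comparison of algebraic and topological cohomology, affineness of $U$) is cited from \cite{voisin-2007-volII} or Grothendieck's theorem and used as a black box.
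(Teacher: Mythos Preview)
Your argument is correct and is precisely the standard proof: $U$ is affine (complement of an ample divisor), so by Grothendieck's comparison theorem $\H^{n+1}(U,\Cc)$ is computed by the global algebraic de~Rham complex, and $\Gamma(U,\Omega^{n+1}_U)=V$ while the kernel is $d\,\Gamma(U,\Omega^n_U)$. The paper does not give an independent argument at all---its entire proof is the sentence ``This is a particular case of Theorem~6.4 \cite{voisin-2007-volII}''---so you have simply unpacked what that citation contains; there is no genuine difference in approach, only in level of detail.
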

\begin{proof}
  This is a particular case of Theorem 6.4~\cite{voisin-2007-volII}.
\end{proof}

\begin{definition}
  The composition of the maps $V \tos \H^{n+1}(U,\Cc) \isoto \PH^n(X,\Cc)$ is called the \emph{residue map}, and will be denoted by $\res: V \tos \PH^n(X,\Cc)$. The restriction of the residue map to $V_\ell$ will be denoted by $\res_\ell$. Equations (\ref{eq:tube}) and (\ref{eq:cotube}) imply the following identity:
  \begin{equation}\label{eq:the_identity}
    \int_{\gamma} \res \omega = \frac{1}{2\pi\sqrt{-1}}\int_{\tau(\gamma)} \omega,
  \end{equation}
  for any $\gamma \in \H_n(X,\Zz)$ and $\omega \in V$.
\end{definition}

\begin{definition}\label{def:residue_basis}
  A set of forms $\omega_1,\dots,\omega_N \in V$ will be said to form a \emph{residue basis for $X$} if their residues $\res \omega_1, \dots, \res \omega_N$ form a basis in $\PH^n(X,\Cc)$.
\end{definition}

The cohomology of $X$ admits the Hodge decomposition $\H^n(X,\Cc) = \bigoplus_{k=0}^n \H^{n-k,k}(X,\Cc)$. The corresponding Hodge filtration is denoted by:
\[
  F^\ell(\H^n(X,\Cc)) := \bigoplus_{k=0}^{\ell} \H^{n-k,k}(X,\Cc).
\]
This filtration on cohomology induces a filtration on the primitive cohomology by restriction:
\[
  F^\ell(\PH^n(X,\Cc)) := \PH^n(X,\Cc) \cap F^\ell(\H^n(X,\Cc)).
\]

\begin{theorem} \label{thm:filtration}
  The residue map $\res_\ell: V_\ell \to \PH^n(X,\Cc)$ surjects onto $F^{\ell-1}(\PH^n(X,\Cc))$.
\end{theorem}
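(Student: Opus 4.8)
The plan is to identify $\PH^n(X,\Cc)$ with $\H^{n+1}(U,\Cc)$ via the residue isomorphism of the preceding proposition, and then realize the pole-order filtration on $V = \bigoplus_\ell V_\ell$ as computing the Hodge filtration under this identification. Concretely, I would argue that $\res_\ell(V_\ell) \subseteq F^{\ell-1}(\PH^n(X,\Cc))$ and, separately, that this inclusion is surjective onto $F^{\ell-1}$.

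For the containment $\res_\ell(V_\ell) \subseteq F^{\ell-1}$, the key input is the fact, also due to Griffiths, that the image of $\H^{n+1}(U,\Cc)$ under the residue map lands in a prescribed step of the Hodge filtration according to the pole order; in the language of the mixed Hodge structure on $\H^{n+1}(U,\Cc)$, the subspace of classes represented by forms with pole order at most $\ell$ corresponds to the $(n+2-\ell)$-st step of the Hodge filtration of $\H^{n+1}(U)$, which the residue map carries into $F^{\ell-1}\PH^n(X,\Cc)$. I would quote this from Voisin's volume (the pole-order filtration equals the Hodge filtration on $\H^{n+1}(U)$) rather than reprove it. First I would recall that $V_\ell$ surjects onto the pole-order-$\ell$ piece of $\H^{n+1}(U,\Cc)$ by Proposition~\ref{prop:residue_map_is_surjective}, and then transport along $\H^{n+1}(U,\Cc)\isoto\PH^n(X,\Cc)$.

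For surjectivity onto $F^{\ell-1}$, I would proceed by induction on $\ell$, or equivalently by a dimension count. The graded pieces $\res_\ell(V_\ell)/\res_{\ell-1}(V_{\ell-1})$ should be identified, via the Griffiths residue description, with $\H^{n-(\ell-1),\ell-1}(X)_{\mathrm{prim}}$; this is governed by the surjection from degree-$(d\ell-n-2)$ polynomials modulo the Jacobian ideal of $f_X$ onto the primitive Hodge pieces. The base case $\ell=1$ says $\res_1(V_1) = F^0\PH^n = \PH^n(X,\Cc)$, which follows since $\res_1$ surjects onto $\H^{n,0}_{\mathrm{prim}}$ and... no: I should instead run the induction upward, starting from $\ell=1$ giving $\res_1(V_1)=\H^{n,0}(X)=F^{0}\cap\ker$ — more precisely $\res$ sends $V_1$ onto the holomorphic part, so $\res_1(V_1)=F^0(\PH^n)\cap(\text{image})$, and then at each step the new pole order contributes exactly the next Hodge summand. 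Combining the containment with the matching of graded dimensions forces $\res_\ell(V_\ell) = F^{\ell-1}(\PH^n(X,\Cc))$.

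The main obstacle I anticipate is the surjectivity half: establishing that the pole-order filtration does not ``skip'' any Hodge level, i.e. that $\res_\ell(V_\ell)$ is not strictly smaller than $F^{\ell-1}$. This is exactly the content of Griffiths' theorem that the Jacobian-ring description computes the primitive Hodge numbers, and the cleanest route is to invoke it wholesale: the surjection $\Cc[x_0,\dots,x_{n+1}]_{d\ell-n-2} \to \H^{n-\ell+1,\ell-1}(X)_{\mathrm{prim}}$ (whose kernel is the degree-$(d\ell-n-2)$ part of the Jacobian ideal) factors through $\res_\ell$ composed with projection to the graded piece, and this gives both the identification of graded pieces and the surjectivity. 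I would cite Voisin (Thm.~6.5 or the surrounding discussion in volume II) for this and keep the argument here to the two-line assembly: containment from the pole-order-is-Hodge statement on $U$, surjectivity from the Jacobian-ring computation, done.
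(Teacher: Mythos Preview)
The paper does not prove this statement at all: its entire proof is ``This is Theorem~8.1 of \cite{griffiths--periods}.'' Your proposal, after the intermediate sketching, also reduces to a citation (to Voisin rather than Griffiths), so in substance your approach matches the paper's.

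A word of caution about your sketch, though: the paper uses an increasing Hodge filtration convention, $F^\ell = \bigoplus_{k=0}^\ell \H^{n-k,k}$, so $F^0 = \H^{n,0}$ is the \emph{smallest} piece, not all of $\PH^n$. Your momentary claim that ``$F^0\PH^n = \PH^n(X,\Cc)$'' is therefore wrong in this paper's notation; you noticed something was off and backtracked, but the fix is simply to read the convention. With that correction your containment-plus-dimension-count outline is the standard one and would work if you wanted to expand it, but since the paper is content with a bare citation there is no need.
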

\begin{proof}
  This is Theorem 8.1 of~\cite{griffiths--periods}.
\end{proof}

\begin{remark}
Notice that there are no exact forms in $V_1$ as the derivative of a rational form can not produce pole order one. Combined with the theorem above we get an \emph{isomorphism}:
  \[
    \res : V_1 \isoto \H^{n,0}(\cx_t, \Cc) \toi \PH^n(\cx_t,\Cc).
  \]
\end{remark}

\subsection{Kernel of the residue map}

There is a finite procedure to maximally reduce an element in $V$ modulo the kernel of the residue map. This reduction procedure is based on the following two theorems. The first theorem states that we do not need infinitely many pole orders to describe the primitive cohomology of $X$.

\begin{theorem}[Theorem 4.2~\cite{griffiths--periods}] \label{thm:bdd_pole_order}
  The restricted residue map $\res_{n+1} : V_{n+1} \to \PH^n(X,\Cc)$ is surjective where $n=\dim_\Cc X$.
\end{theorem}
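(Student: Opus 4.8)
The quickest proof is to read this off Theorem~\ref{thm:filtration}, which is already available to us here. That theorem says $\res_{\ell}$ maps $V_{\ell}$ onto $F^{\ell-1}(\PH^{n}(X,\Cc))$; taking $\ell=n+1$ the image is $F^{n}(\PH^{n}(X,\Cc))=\PH^{n}(X,\Cc)\cap F^{n}(\H^{n}(X,\Cc))$, and since the Hodge filtration on middle cohomology has only $n+1$ nonzero steps, $F^{n}(\H^{n}(X,\Cc))=\bigoplus_{k=0}^{n}\H^{n-k,k}(X,\Cc)=\H^{n}(X,\Cc)$, so this is all of $\PH^{n}(X,\Cc)$. (Indeed $\res_{\ell}$ is already surjective for every $\ell\ge n+1$.) Under the conventions of this section the statement is thus a one-line corollary of the preceding theorem.

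If one instead wants a proof that does not presuppose Theorem~\ref{thm:filtration} --- the order in which Griffiths in fact proceeds --- the plan is a pole-order descent. By Proposition~\ref{prop:residue_map_is_surjective}, composed with the isomorphism $\H^{n+1}(U,\Cc)\isoto\PH^{n}(X,\Cc)$, the full residue map $\res\colon V=\bigoplus_{\ell\ge1}V_{\ell}\to\PH^{n}(X,\Cc)$ is surjective; since its kernel is the subspace of exact forms, it is enough to show that every $\omega=\tfrac{p}{f_{X}^{\ell}}\Omega\in V_{\ell}$ with $\ell\ge n+2$ is congruent modulo exact forms to a form of pole order $\ell-1$, and then iterate down to pole order $n+1$. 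By Remark~\ref{rem:Vl}, $p$ is homogeneous of degree $d\ell-n-2$. Smoothness of $X$ means $\partial_{0}f_{X},\dots,\partial_{n+1}f_{X}$ have no common projective zero, so they form a regular sequence and the Jacobian ring $R:=\Cc[x_{0},\dots,x_{n+1}]/(\partial_{0}f_{X},\dots,\partial_{n+1}f_{X})$ is a graded complete intersection, Artinian Gorenstein, with Hilbert series $(1+t+\dots+t^{d-2})^{n+2}$; hence $R_{m}=0$ for $m>(d-2)(n+2)$. For $\ell\ge n+2$ one has $\deg p=d\ell-n-2\ge(d-1)(n+2)>(d-2)(n+2)$, so $p$ lies in the Jacobian ideal: $p=\sum_{i}A_{i}\,\partial_{i}f_{X}$ for homogeneous $A_{i}$. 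The Griffiths--Dwork integration-by-parts identity then yields
\[
\frac{p}{f_{X}^{\ell}}\,\Omega\ \equiv\ \frac{1}{\ell-1}\cdot\frac{\sum_{i}\partial_{i}A_{i}}{f_{X}^{\ell-1}}\,\Omega\pmod{\text{exact forms}},
\]
a form of pole order $\ell-1$, which completes the inductive step.

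In the self-contained version the two ingredients carrying the weight, and which I would want to state cleanly or cite, are: first, that smoothness of $X$ is precisely what makes the Jacobian ideal a complete intersection, forcing $R$ to vanish in degrees $>(d-2)(n+2)$ --- this is the only place the hypothesis is used, and it is exactly what pins the bound at $n+1$ rather than something larger; and second, the exactness claim in the displayed reduction, namely that $\tfrac{p}{f_{X}^{\ell}}\Omega-\tfrac{1}{\ell-1}\tfrac{\sum_{i}\partial_{i}A_{i}}{f_{X}^{\ell-1}}\Omega=d\eta$ for an explicit rational $n$-form $\eta$ with poles of order $\ell-1$ along $X$. Neither is deep --- the first is standard commutative algebra, the second an explicit exterior-derivative computation --- but getting the constant $\tfrac{1}{\ell-1}$ and the pole orders right in the second is the one place that requires care.
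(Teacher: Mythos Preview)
The paper does not prove this statement at all: it simply records it as Theorem~4.2 of Griffiths and moves on. So there is no ``paper's own proof'' to compare against; your proposal supplies what the paper omits.

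Both of your arguments are correct. The first is entirely legitimate within the logical structure of this paper, since Theorem~\ref{thm:filtration} is stated (also without proof, as Theorem~8.1 of Griffiths) in the subsection \emph{preceding} Theorem~\ref{thm:bdd_pole_order}; with the paper's increasing filtration convention $F^{\ell}=\bigoplus_{k\le\ell}\H^{n-k,k}$, setting $\ell=n+1$ indeed gives $F^{n}=\H^{n}$ and hence surjectivity onto all of $\PH^{n}$. Your caveat that this would be circular in Griffiths' own development (where 4.2 precedes 8.1) is well placed.

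Your second argument is essentially Griffiths' original proof, and the ingredients are exactly as you identify: smoothness makes the partials a regular sequence, so the Jacobian ring is Artinian Gorenstein with top degree $(d-2)(n+2)$; for $\ell\ge n+2$ one has $\deg p=d\ell-n-2\ge(d-1)(n+2)>(d-2)(n+2)$, so $p$ lies in the Jacobian ideal and the pole order drops by the reduction identity. The reduction step you display is precisely the one spelled out in Section~\ref{sec:GD} (compare the definition of $q_{r-1}$ there), so within this paper you could simply cite that computation rather than redo it. Nothing is missing.
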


The second theorem makes the kernel of the residue map more explicit and is a strengthening of Proposition~\ref{prop:residue_map_is_surjective} above. Let $W_\ell = \H^0(\Pp^{n+1},\Omega^n_{\Pp^{n}/\Cc}(\ell[X]))$ be the space of $n$-forms on $\Pp^{n+1}$ with pole order at most $\ell$ on $X$. Once again, these can be viewed as holomorphic $n$-forms on $U$. We have the algebraic derivation map $\dd: W \to V: \eta \mapsto \dd \eta$ which allows us to write:
\[
  \H^{n+1}(U,\Cc) \simeq V/\dd(W).
\]

\begin{theorem}[Theorem 4.3~\cite{griffiths--periods}]\label{thm:griffiths2}
  For each $\ell \ge 1$, the kernel of the restricted residue map $\res_\ell : V_\ell \to \PH^n(X,\Cc)$ is exactly the image of the derivation map $\dd : W_{\ell-1} \to V_\ell$. 
\end{theorem}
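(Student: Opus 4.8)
The plan is to strip off the easy inclusion, reduce the remaining content to a statement about rational forms with no topology left in it, and prove that statement by a descending induction on pole order whose engine is the regular-sequence property of the partial derivatives of $f$ — the one place where smoothness of $X$ is used.

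The inclusion $\dd(W_{\ell-1}) \subseteq \ker(\res_\ell)$ is immediate: differentiation raises pole order by at most one, so $\dd(W_{\ell-1}) \subseteq V_\ell$, while Proposition \ref{prop:residue_map_is_surjective} together with the isomorphism $\H^{n+1}(U,\Cc) \isoto \PH^n(X,\Cc)$ identifies $\ker(\res)$ with $\dd(W)$, the space of all exact $(n+1)$-forms on $U$ with poles along $X$. The same two facts give $\ker(\res_\ell) = \ker(\res) \cap V_\ell = \dd(W) \cap V_\ell$, so Theorem \ref{thm:griffiths2} is equivalent to the formal assertion $\dd(W)\cap V_\ell = \dd(W_{\ell-1})$: \emph{if $\eta$ is a rational $n$-form on $U$ with a pole of order $m$ along $X$ such that $\dd\eta$ has pole order $\le \ell \le m$, then $\dd\eta = \dd\eta'$ for some $n$-form $\eta'$ of pole order $\le \ell - 1$.} I would prove this by descending induction on $m$; the base case $m \le \ell - 1$ is vacuous, and the case $\ell = 1$ is settled separately, since a rational form with a pole of order exactly one is never exact (so $\ker(\res_1)=0$) and $W_0 = \H^0(\Pp^{n+1},\Omega^n_{\Pp^{n+1}}) = 0$.

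For the inductive step, write $\eta = N/f^m$ with $N$ a polynomial $n$-form that descends to $\Pp^{n+1}$ — concretely, in Griffiths' parametrization, $N = \iota_A\Omega$ for a polynomial vector field $A = \sum_i A_i \del_{x_i}$ — and apply the product rule:
\[
  \dd\eta = \frac{\dd N}{f^m} - m\,\frac{\dd f \wedge N}{f^{m+1}}.
\]
Since $\dd N/f^m$ contributes nothing in pole order $m+1$, the hypothesis that $\dd\eta$ has pole order $\le \ell \le m$ forces $f \mid \dd f \wedge N$, i.e.\ $\dd f \wedge N \equiv 0 \pmod{f}$; in Griffiths' model this reads $\sum_i A_i\,\tfrac{\partial f}{\partial x_i} \equiv 0 \pmod{f}$. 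Here smoothness enters: $\tfrac{\partial f}{\partial x_0},\dots,\tfrac{\partial f}{\partial x_{n+1}}$ have no common zero off the origin of $\Aa^{n+2}$, hence form a regular sequence, so by Euler's identity $\sum_i x_i\tfrac{\partial f}{\partial x_i} = (\deg f)f$ and exactness of the Koszul complex the resulting syzygy $\sum_i (A_i - c\,x_i)\tfrac{\partial f}{\partial x_i} = 0$ is a combination of Koszul syzygies, producing an antisymmetric polynomial matrix $(B_{ij})$ with $A_i = c\,x_i + \sum_j B_{ij}\tfrac{\partial f}{\partial x_j}$. Substituting this into the Griffiths--Dwork reduction identity rewrites $\eta$, modulo $\dd$ of an explicit $n$-form of pole order $\le m-1$, as a form of pole order $\le m-1$; thus $\dd\eta = \dd\eta'$ with $\eta' \in W_{m-1}$. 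Since $\dd\eta' = \dd\eta = \omega$ still lies in $V_\ell$, the hypothesis is preserved and the induction runs down to pole order $\ell-1$.

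The step I expect to be the real work is exactly this reduction: making the leading-term cancellation and the Koszul syzygy manipulation rigorous within the space of rational $n$-forms on $\Pp^{n+1}$, and in particular checking that the auxiliary forms manufactured from $c$ and $(B_{ij})$ genuinely descend from the affine cone and land in $W_{m-1}$ with the correct pole order. The conceptual content, though, is transparent and is where the hypotheses pull their weight: it is the vanishing of the higher Koszul homology of $\tfrac{\partial f}{\partial x_0},\dots,\tfrac{\partial f}{\partial x_{n+1}}$ — equivalent to $X$ being smooth — that both makes the reduction go through and pins down the sharp pole bound $\ell-1$; for a singular hypersurface the statement is false.
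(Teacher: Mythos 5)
The paper offers no proof of this statement---it is quoted from Griffiths with a citation to \cite{griffiths--periods}---and your proposal is, in outline, a correct reconstruction of Griffiths' own argument for his Theorem 4.3: the reduction to the purely algebraic claim $\dd(W)\cap V_\ell=\dd(W_{\ell-1})$, the leading-term analysis at pole order $m+1$ forcing $\sum_i A_i\,\partial f/\partial x_i\equiv 0\pmod f$, and the regular-sequence/Koszul step that trades the syzygy for a drop in pole order modulo an exact form are exactly the ingredients of the original proof, and smoothness enters precisely where you say it does. The only point to flag is that the inductive step divides by $m-1$ and so runs only for $m\ge 2$; you correctly quarantine $\ell=1$, but the fact you invoke there (a form of pole order exactly one is never exact) is asserted rather than proved---the paper does the same in the remark following Theorem \ref{thm:filtration}, and it follows from the injectivity of the Poincar\'e residue on $V_1$.
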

\begin{remark}
  It is clear that $\dd(W_{\ell-1})$ must belong to the kernel of $\res_\ell$. What is non-trivial here is that, if for some $k \ge \ell$ a form $\eta \in W_k$ has derivative $\dd \eta$ in $V_\ell$, then we can find $\tilde \eta \in V_{\ell-1}$ such that $\dd \eta = \dd \tilde \eta$. 
\end{remark}

\subsection{Griffiths--Dwork reduction}\label{sec:GD}

Given a form $\omega \in V$ we will describe the \emph{Griffiths--Dwork reduction} on $\omega$ which modifies $\omega$ modulo the kernel of the residue map and puts it into a \emph{reduced form}. The primary motivation for this reduction is Lemma~\ref{lem:linear_dependence}. 

Recall the nature of the elements in $V$ from Remark~\ref{rem:Vl}. We will now describe the elements in $W$, the holomorphic $n$-forms on $\Pp^{n+1} \setminus X$. Let us use $\dd x^{i,j}$ to refer to the $n$-form $\dd x_0 \dots \widehat{\dd x_i} \dots \widehat{\dd x_j} \dots \dd x_{n+1}$, where hat denotes omission. A form in $\varphi \in W_\ell$ can be written as:
\begin{align}\label{eq:dphi}
  \varphi &= \sum_{i < j}(-1)^{i+j}(x_i s_j(x) - x_js_i(x)) \frac{\dd x^{i,j}}{f_X^\ell},
\end{align}
where $s_i \in \Cc[x_0,\dots,x_{n+1}]$ is homogeneous of degree $(\ell d  - n -1)$ (see~\cite{griffiths--periods} for a derivation of this fact).

Let us say that a form $\omega \in V$ has pole order $\ell$ if $\omega \in V_{\ell}$ but $ \omega \notin V_{\ell -1}$. Pole order induces a natural grading on $V$. If $\omega \in V$ we will write $\omega = \sum_{i=1}^{r} \omega_j$ where $\omega_j$ is zero or has pole order $j$, and the highest term $\omega_r$ is non-zero. Let us refer to $r$ as the pole order of $\omega$. Similarly, we can grade $W$ by pole order.

\begin{notation}\label{not:jacobian}
  Let $J(f_X)=(\frac{\del f_X}{\del x_0}, \dots,\frac{\del f_X}{\del x_{n+1}})$ in $\Cc[x_0,\dots,x_{n+1}]$ be the Jacobian ideal of $f_X$. We will assume a Gr\"obner basis for $J(f_X)$ is fixed once and for all, so that all remainder computations modulo $J(f_X)$ are well defined.
\end{notation}

Given $\omega = \sum_{i=1}^r \omega_i \in V$, for each $i=1,\dots,r$ write $\omega_i = p_i(x) \frac{\Omega}{f_X^i}$ where $p_i$ is a polynomial of degree $id - n- 2$ not divisible by $f_X$. Let $q_r$ be the remainder of $p_r$ modulo $J(f_X)$ and find polynomials $s_0,\dots,s_{n+1}$ satisfying:
\[
  p_r - q_r = \sum_{j=0}^{n+1}s_j \frac{\del f_X}{\del x_j}.  
\]
Define the polynomial $q_{r-1} = \frac{1}{r-1} \sum_{j=0}^{n+1} \frac{\del s_j}{\del x_j}$. Then by Equation~\ref{eq:dphi} we may conclude that $\omega$ is equivalent modulo exact forms to
\[
  \tilde \omega := \frac{q_r \Omega}{f_X^r} + \frac{(p_{r-1}+q_{r-1})\Omega}{f_X^{r-1}} + \sum_{i=1}^{r-2} \omega_i.
\]
The choice of the ``coordinates'' $s_i$ for the difference $p_r-q_r$ is not canonical and therefore the partially reduced form $\tilde \omega$ is not canonical. However, an application of Theorem~\ref{thm:griffiths2} implies that any other choice of coordinates would yield a reduction that is equivalent to $\tilde \omega$ modulo exact forms of pole order $r-1$. This difference will vanish in the next step of the reduction.

Writing $\tilde \omega = \sum_{i=1}^r \tilde \omega_i$, and $\tilde \omega_i = \frac{\tilde p_i \Omega}{f_X^i}$, we can apply the same reduction method to the form $\tilde \omega_{r-1}$. After repeated application, all terms in the pole decomposition have been put into a normal form with regards to $J(f_X)$. 

\begin{definition}
  The resulting form is denoted by $[\omega]_{\GD}$ and will be called a \emph{reduced form}.  
\end{definition}

\begin{lemma}\label{lem:linear_dependence}
  Suppose $\vphantom{\omega}^{1}\omega,\dots,\vphantom{\omega}^{s}\omega$ are reduced forms in $V$. Then the $\vphantom{\omega}^{i}\omega$ are linearly independent over $\Cc$ if and only if the cohomology classes $\res \vphantom{\omega}^{i}\omega$ are linearly independent over $\Cc$.
\end{lemma}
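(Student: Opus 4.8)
The plan is to deduce both implications from the structure of the residue map, treating the two directions separately. The implication ``$\res\,{}^{1}\omega,\dots,\res\,{}^{s}\omega$ linearly independent $\implies$ ${}^{1}\omega,\dots,{}^{s}\omega$ linearly independent'' is the formal one: since $\res$ is $\Cc$-linear, any relation $\sum_i a_i\,{}^{i}\omega = 0$ among the forms yields the relation $\sum_i a_i \res\,{}^{i}\omega = 0$ with the same coefficients, so I would simply take the contrapositive. The content is entirely in the reverse implication.

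For that direction I would reduce to a single statement: \emph{if $\eta \in V$ is a reduced form with $\res\eta = 0$, then $\eta = 0$}. This suffices, because for any relation $\sum_i a_i \res\,{}^{i}\omega = 0$ the form $\eta := \sum_i a_i\,{}^{i}\omega$ satisfies $\res\eta = 0$ and is again reduced --- the polynomials in normal form with respect to the fixed Gr\"obner basis of $J(f_X)$ form a $\Cc$-subspace in each degree, and the reduced-form condition is imposed degree-by-degree on the pole-order decomposition --- so the statement forces $\sum_i a_i\,{}^{i}\omega = 0$. To prove the statement, suppose $\eta \ne 0$ and let $r$ be its pole order, so the top term is $\eta_r = q_r\,\Omega/f_X^r \ne 0$ with $q_r$ in normal form modulo $J(f_X)$. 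By Theorem~\ref{thm:griffiths2} with $\ell = r$, write $\eta = \dd\varphi$ for some $\varphi \in W_{r-1}$. Decompose $\varphi$ into its pole-order-homogeneous parts $\varphi = \sum_{i=1}^{r-1}\varphi_i$ and write each $\varphi_i$ in the form~(\ref{eq:dphi}) with coefficients $s^{(i)}_0,\dots,s^{(i)}_{n+1}$ homogeneous of degree $id-n-1$. Since $\dd\varphi_i$ has pole order at most $i+1$, only $\varphi_{r-1}$ affects pole order $r$, and the computation recalled in Section~\ref{sec:GD} shows that the pole-order-$r$ part of $\dd\varphi$ is a nonzero scalar multiple of $\big(\sum_j s^{(r-1)}_j\,\del f_X/\del x_j\big)\,\Omega/f_X^r$. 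Comparing with $\eta_r$, the polynomial $q_r$ is a scalar multiple of $\sum_j s^{(r-1)}_j\,\del f_X/\del x_j$, which lies in $J(f_X)$; but a polynomial that is both in $J(f_X)$ and in normal form modulo $J(f_X)$ is $0$, contradicting $\eta_r \ne 0$. Hence $\eta = 0$.

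The step I expect to be the main obstacle is the geometric input in the last paragraph: recognizing that the highest-pole-order part of any exact form $\dd\varphi$ has numerator in the Jacobian ideal $J(f_X)$. This is exactly where Griffiths' description~(\ref{eq:dphi}) of the holomorphic $n$-forms on $U = \Pp^{n+1}\setminus X$ is used, together with Theorem~\ref{thm:griffiths2}, and it is the precise reason the Griffiths--Dwork normal form is transverse to the kernel of the residue map. The remaining ingredients --- linearity of $\res$, the pole-order grading on $V$ and $W$, and closure of normal-form polynomials under $\Cc$-linear combinations --- are routine bookkeeping.
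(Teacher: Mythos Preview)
Your proof is correct and follows essentially the same approach as the paper: both set $\eta=\sum_i a_i\,{}^{i}\omega$, observe that the numerator of its top pole-order term lies simultaneously in $J(f_X)$ (because $\res\eta=0$) and in normal form (because the ${}^{i}\omega$ are reduced and normal forms span a subspace), and conclude it vanishes. The only cosmetic difference is that the paper runs a descending induction on pole order to kill each $\eta_j$ in turn, whereas you package the same step as a single contradiction on the definition of pole order; you are also more explicit than the paper in invoking Theorem~\ref{thm:griffiths2} and the shape of $\dd\varphi$ to justify why the top numerator lands in $J(f_X)$.
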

\begin{proof} The ``if'' direction is trivial. We prove the other implication by proving its contrapositive. In fact, we will prove that any linear relation amongst the residues lifts to a linear relation on the reduced forms. Suppose $\sum_{i=1}^s a_i \res \vphantom{\omega}^{i}\omega = 0$ for some $a_i \in \Cc$ and define $\eta=\sum_{i=1}^s a_i \vphantom{\omega}^{i}\omega = \sum_{j=1}^r \eta_j$ where $r$ is the maximal pole order amongst all $\vphantom{\omega}^{i}\omega$. We will show by descending induction on pole order that $\eta_j =0$ for all $j$. Since $\res \eta = 0$, we can reduce the pole order of $\eta$ modulo the kernel of the residue map. This means that the highest order term $\eta_r = \frac{p_r \Omega}{f^r}$ must satisfy $p_r \in J(f_X)$. On the other hand, the highest order terms of $\vphantom{\omega}^{i}\omega$ are all in normal form with respect to $J(f_X)$ and thus a linear combination belongs to $J(f_X)$ if and only if the linear combination is 0, implying $p_r =0$. We may now proceed to pole order $r-1$, and since $\eta_r=0$, $\res \eta =0$ implies $\eta_{r-1} \in J(f_X)$. We may thus repeat the argument for all terms $\eta_j$.
\end{proof}

Let $R$ be the polynomial ring $\Cc[x_0,\dots,x_{n+1}]$. We will denote by $R_s \subset R$ the subspace of homogeneous degree $s$ polynomials in $R$. For a homogeneous ideal $I \subset R$, we denote by $I_s$ the intersection $I \cap R_s$. Define the following vector space:
\[
  S = \bigoplus_{\ell \ge 1} R_{d\ell-n-2}/J(f_X)_{d\ell-n-2}.
\]
Observe that $S \subset R/J(f_X)$ is finite dimensional since $Z(f_X)$ is smooth.

  Let $p_1,\dots,p_N \in \Cc[x_0,\dots,x_{n+1}]$ be homogeneous polynomials which descend to a basis on $S$. For each $i$, let $\ell_i$ be the positive integer satisfying $\deg p_i = d\ell_i -n -2$. Define the forms
\[
  \omega_i := \frac{p_i}{f^{\ell_i}}\Omega,\quad i=1,\dots,N.
\]

\begin{proposition}\label{prop:GD-basis_residues}
  The forms $\omega_1,\dots,\omega_N$ form a residue basis for $X$, see Definition~\ref{def:residue_basis}. 
\end{proposition}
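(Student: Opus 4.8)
The plan is to show that the $\omega_i$ are linearly independent in primitive cohomology and that there are the right number of them, so that they automatically form a basis. The key inputs are Theorem \ref{thm:bdd_pole_order}, which tells us that $V_{n+1}$ already surjects onto $\PH^n(X,\Cc)$, and Theorem \ref{thm:griffiths2}, which identifies the kernel of $\res_\ell$ with $\dd(W_{\ell-1})$; together with Lemma \ref{lem:linear_dependence} these let us translate statements about cohomology classes into statements about reduced forms, which live in the concrete graded vector space built from $R/J(f_X)$.

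First I would observe that the chosen $p_i$ are, by construction, in normal form with respect to the fixed Gröbner basis of $J(f_X)$, so each $\omega_i = p_i \Omega/f^{\ell_i}$ is already a reduced form (each has a single pole-order term which is a $J(f_X)$-normal form). Hence by Lemma \ref{lem:linear_dependence}, the classes $\res\omega_1,\dots,\res\omega_N$ are linearly independent in $\PH^n(X,\Cc)$ precisely because $p_1,\dots,p_N$ are linearly independent modulo $J(f_X)$, which is how they were selected. So injectivity of $\mathrm{span}(\omega_i) \to \PH^n(X,\Cc)$ is immediate.

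Next I would prove surjectivity, i.e.\ that $N = \dim_\Cc \PH^n(X,\Cc)$. By Theorem \ref{thm:bdd_pole_order} the residue map restricted to $V_{\le n+1}$ is surjective, and $V_{\le n+1} = \bigoplus_{\ell=1}^{n+1} V_\ell$ where $V_\ell \cong R_{d\ell - n - 2}$ via $p \mapsto p\Omega/f^\ell$. Running the Griffiths--Dwork reduction on every element of $V_{\le n+1}$ shows that $\PH^n(X,\Cc)$ is spanned by the residues of reduced forms whose pole-order components are $J(f_X)$-normal forms of degree $d\ell - n - 2$ for $1 \le \ell \le n+1$; these are exactly spanned by the images of the $p_i$ with $\ell_i \le n+1$. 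One must also check that no $p_i$ of degree $d\ell - n - 2$ with $\ell > n+1$ is needed — but in fact $S$ as defined is finite-dimensional and, by Macaulay's theorem / the smoothness of $Z(f_X)$, the graded pieces $R_{d\ell - n - 2}/J(f_X)_{d\ell - n - 2}$ vanish for $\ell$ large, and more precisely the relevant nonzero pieces occur only for $1 \le \ell \le n+1$, matching the Hodge numbers $h^{n-k,k}$ of primitive cohomology via Theorem \ref{thm:filtration}. Thus $\dim S = N$ and the residues of the $\omega_i$ span a space of dimension $N = \dim \PH^n(X,\Cc)$; combined with linear independence this proves they form a basis.

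The main obstacle I anticipate is the dimension count: one has to argue cleanly that the finite-dimensional space $S$ has dimension exactly equal to $\dim \PH^n(X,\Cc)$, and that its nonzero graded pieces are supported in the pole-order range $1 \le \ell \le n+1$ so that Theorem \ref{thm:bdd_pole_order} applies to all of them. This is the place where the smoothness of $X$ enters essentially (finiteness of the Jacobian ring in the relevant degrees) and where one should invoke the standard identification of the graded pieces of the Jacobian ring with the Hodge pieces $\PH^{n-\ell+1,\ell-1}(X)$ coming from Theorems \ref{thm:filtration} and \ref{thm:griffiths2}; everything else is a direct application of the reduction algorithm and Lemma \ref{lem:linear_dependence}.
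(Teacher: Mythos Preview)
Your proposal is correct and follows essentially the same approach as the paper: observe that the $\omega_i$ are already in reduced form and invoke Lemma \ref{lem:linear_dependence} for linear independence, then establish that $N$ equals $\dim \PH^n(X,\Cc)$. The paper's own proof is a one-liner that cites Lemma \ref{lem:linear_dependence} together with the remark that ``none of the $\omega_i$ can be reduced any further'' and then defers the dimension count to Proposition~6.2 of Voisin; you have simply unpacked that reference by sketching the Griffiths--Dwork/Jacobian-ring argument yourself. One small caveat: the paper only asks that the $p_i$ \emph{descend to a basis} of $S$, not that they be chosen in normal form with respect to the fixed Gr\"obner basis, so your phrase ``by construction, in normal form'' is a tacit (harmless) extra choice; the paper's ``cannot be reduced further'' makes the same tacit choice.
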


\begin{proof}
  This follows from Lemma~\ref{lem:linear_dependence} above and the observation that none of the $\omega_i$ can be reduced any further. See also Proposition 6.2 in~\cite{voisin-2007-volII}.
\end{proof}

\begin{remark}
  For convenience, we explained the Griffiths--Dwork reduction over $\Cc$. However, the reduction algorithm works over any field of characteristic 0. We need the restriction on the characteristic because of the division required during the reduction process.
\end{remark}

\subsection{Picard--Fuchs equations}\label{sec:PF}

Fix an integer $n > 0$ and let $f \in \Cc[s][x_0,\dots,x_{n+1}]$ be a homogeneous polynomial of degree $d$ with coefficients which are polynomial in the variable $s$, further assume that the coefficients of $f$ are relatively prime. For $t \in \Cc$ we will write $f_t:=f|_{s=t}$, $\cx_t := Z(f_t) \subset \Pp^{n+1}$ and $U_t := \Pp^{n+1} \setminus \cx_t$. Let us assume that $\cx_t$ is smooth for generic $t$.

Fix a non-zero polynomial $p \in \Cc[x_0,\dots,x_{n+1}]$ of degree $d \ell - n -2$ where $\ell \ge 1$ is an integer. Define the following holomorphic form on $U_t$:
\[
  \omega(t) := \frac{p(x)}{f_t^\ell}\Omega.
\]
We will now describe how to find the Picard--Fuchs equation corresponding to $\omega(t)$.

Recall that we denote by $\del_t$ the differentiation by $t$ operator. Let $\omega^{(k)}(t) := \del^k_t \omega(t)$ be the $k$-th derivative of $\omega(t)$ with respect to $t$. For any $k=0,1,\dots$ the Griffiths--Dwork reduction $[\omega^{(k)}(t)]_{\GD}$ of the $k$-th derivative has pole order bounded by $n+1$ due to Theorem~\ref{thm:bdd_pole_order}. In particular, the reductions of these infinitely many derivatives live in a finite dimensional $\Cc(t)$-vector space. 

Let $\delta \ge 1$ be the smallest integer for which there is a linear linear relation:
\begin{equation}\label{eq:linear_relation}
  [\omega(t)^{(\delta)}]_{\GD} +a_{\delta-1}(t)[\omega(t)^{(\delta-1)}]_{\GD} +\dots+ a_0(t) [\omega(t)]_{\GD} = 0,
\end{equation}
where $a_i(t)\in \Cc(t)$ are rational functions in $t$.

\begin{notation}\label{not:pf}
  Let $\cd = \del_t^\delta + \sum_{i=0}^{\delta-1} a_i(t) \del_t^i \in \Cc(t)[\del_t]$ be the corresponding differential operator. 
\end{notation}

\begin{lemma}
  Modulo exact forms, we have the following equivalence:
  \[
    \cd \omega(t) \equiv 0.
  \]
\end{lemma}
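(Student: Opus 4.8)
The plan is to derive the lemma directly from the defining relation (\ref{eq:linear_relation}) together with the basic feature of the Griffiths--Dwork reduction established in Section \ref{sec:GD}, namely that passing from a form to its reduced form changes it only by an exact form. So the first step I would record is the congruence
\[
  \omega^{(k)}(t) \;\equiv\; [\omega^{(k)}(t)]_{\GD} \qquad (k \ge 0),
\]
valid modulo exact forms. This is exactly what the reduction procedure of Section \ref{sec:GD} produces: at each stage the current form is replaced by one differing from it by an element of $\dd(W)$ (via the identity (\ref{eq:dphi})), and Theorem \ref{thm:griffiths2} guarantees that the ambiguity in the choice of the $s_j$ is itself absorbed by exact forms of lower pole order. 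Since the derivation $\dd\colon W \to V$ is the algebraic exterior derivative in the variables $x_0,\dots,x_{n+1}$ alone, it is $\Cc(t)$-linear, so the reduction is legitimate over the field $\Cc(t)$ (of characteristic $0$, as the divisions in the reduction require).

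Next I would assemble these congruences with the coefficients $a_0(t),\dots,a_{\delta-1}(t) \in \Cc(t)$ of the operator $\cd$ from Notation \ref{not:pf}. Unwinding the definition of $\cd$,
\[
  \cd\,\omega(t) \;=\; \omega^{(\delta)}(t) + \sum_{i=0}^{\delta-1} a_i(t)\,\omega^{(i)}(t).
\]
Because the exact forms $\dd(W)$ form a $\Cc(t)$-subspace, congruence modulo exact forms is a $\Cc(t)$-linear equivalence; applying the first step term by term therefore gives
\[
  \cd\,\omega(t) \;\equiv\; [\omega^{(\delta)}(t)]_{\GD} + \sum_{i=0}^{\delta-1} a_i(t)\,[\omega^{(i)}(t)]_{\GD},
\]
and the right-hand side is the zero form precisely by the relation (\ref{eq:linear_relation}) that defined $\delta$ and the $a_i(t)$. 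Hence $\cd\,\omega(t) \equiv 0$ modulo exact forms.

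The argument is short, and I do not expect a genuine obstacle, but the point I would be careful to state explicitly is that everything here takes place over the function field $\Cc(t)$ rather than over $\Cc$: the ``exact forms'' are elements of $\dd(W)$ whose polynomial coefficients are allowed to be rational in $t$, and one must know that the Griffiths--Dwork reduction and the relation ``$\equiv$'' are compatible with the base change from $\Cc$ to $\Cc(t)$. This is exactly what makes the construction of $\delta$ and the $a_i(t)$ in (\ref{eq:linear_relation}) meaningful in the first place, and it is harmless because Theorem \ref{thm:griffiths2} is a statement about the algebraic de Rham complex of $U_t$ relative to the $t$-line and the operative derivation $\dd$ ignores $t$.
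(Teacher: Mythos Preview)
Your proof is correct and follows the same approach as the paper: substitute the congruences $\omega^{(k)}(t)\equiv[\omega^{(k)}(t)]_{\GD}$ into the defining linear relation (\ref{eq:linear_relation}). You add a useful explicit remark about working over $\Cc(t)$ rather than $\Cc$, which the paper leaves implicit.
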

\begin{proof}
  Indeed, in Equation (\ref{eq:linear_relation}) we may simply substitute $[\omega^{(k)}(t)]_{\GD} \equiv \omega^{(k)}(t)$ for each $k=0,\dots,\delta$.
\end{proof}

 \begin{definition}
   The ODE constructed above, $\cd=0$, is called the \emph{Picard--Fuchs equation} corresponding to the form $\omega$.
 \end{definition}

 From now on assume that $\cx_0$ is a smooth hypersurface. Furthermore, we will assume that a basis for the primitive homology is fixed around $t=0$, which we will denote by $\{\gamma_i(t)\}_{i=1}^m \subset \PH_n(\cx_t,\Zz)$. As in the introduction (\ref{eq:period_function_intro}), define the \emph{period function} associated to $\omega(t)$:
 \begin{equation}\label{eq:period_function}
   \sigma(t) := \left(\int_{\gamma_1} \res \omega(t), \dots, \int_{\gamma_m} \res \omega(t)\right),
 \end{equation}
 and the individual periods $\sigma_i(t) = \int_{\gamma_i}\res \omega(t)$.

 \begin{lemma}
   The operator $\cd$ defined above is the minimal annihilating differential operator for the period function $\sigma(t)$.
 \end{lemma}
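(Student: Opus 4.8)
The plan is to show two things: first, that $\cd$ annihilates $\sigma(t)$, and second, that no differential operator of lower order (or of the same order with non-trivial leading coefficient reduced away) can annihilate $\sigma(t)$. The first assertion is essentially already in hand: by the preceding lemma $\cd\omega(t)\equiv 0$ modulo exact forms, and since the period function is obtained by integrating $\res\omega(t)$ over the locally constant family of cycles $\gamma_i(t)$, one differentiates under the integral sign (as justified in Section \ref{sec:intro_de}: nearby tubes $\tau(\gamma_i(t))$ are homotopic, so the domain of integration is effectively fixed). Hence $\cd\sigma_i(t)=\int_{\gamma_i}\res(\cd\omega(t))=\int_{\gamma_i}\res(\text{exact})=0$ for each $i$, so $\cd\sigma(t)=0$.

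For minimality, suppose $\cd'=\del_t^{\delta'}+\sum_{i=0}^{\delta'-1}b_i(t)\del_t^i\in\Cc(t)[\del_t]$ is a monic operator with $\delta'\le\delta$ and $\cd'\sigma(t)=0$. I would translate this back into a statement about forms. The key point is a converse to differentiating under the integral sign: if a $\Cc(t)$-linear combination $\eta(t):=\sum_{k}c_k(t)\,\omega^{(k)}(t)$ has the property that $\int_{\gamma_i}\res\eta(t)=0$ for all $i$, i.e. all its periods vanish, then $\res\eta(t)=0$ in $\PH^n(\cx_t,\Cc)$, and therefore $[\eta(t)]_{\GD}=0$ by Lemma \ref{lem:linear_dependence} (the reduced form of $\eta(t)$ is zero iff its residue class is). Applying this to $\eta(t)=\cd'\omega(t)$, whose periods are exactly the entries of $\cd'\sigma(t)=0$, we conclude $[\cd'\omega(t)]_{\GD}=0$, i.e. $[\omega^{(\delta')}(t)]_{\GD}+\sum_{i=0}^{\delta'-1}b_i(t)[\omega^{(i)}(t)]_{\GD}=0$. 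This is precisely a $\Cc(t)$-linear relation of the type (\ref{eq:linear_relation}) of order $\delta'\le\delta$, contradicting the minimality of $\delta$ unless $\delta'=\delta$ and the relation coincides with the one defining $\cd$. Hence $\cd'=\cd$, so $\cd$ is the minimal annihilating operator.

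The step I expect to be the main obstacle is the converse direction just used: deducing $\res\eta(t)=0$ from the vanishing of all periods $\int_{\gamma_i}\res\eta(t)$. This requires that the classes $\gamma_1(t),\dots,\gamma_m(t)$ span enough of $\H_n(\cx_t,\Cc)$ to detect the primitive cohomology class $\res\eta(t)$ under the perfect pairing between $\H_n$ and $\H^n$ — that is, one needs that a primitive cohomology class pairing to zero against every $\gamma_i(t)$ is zero. Since $\{\gamma_i(t)\}$ is a basis for $\PH_n(\cx_t,\Zz)$ and the intersection pairing restricted to primitive (co)homology is non-degenerate (as recorded in Section \ref{sec:primitive}), this holds; the only subtlety is that the $\gamma_i(t)$ are defined only locally around $t=0$, but this suffices because the identity of meromorphic functions on a connected domain is determined by their germ at a point. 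One further minor point to address: $\delta\ge 1$, which is automatic since $\sigma(t)$ is non-constant as long as $\omega(t)$ is not exact; if $\omega(t)$ were exact the statement is vacuous or trivial, and in any case the construction of $\cd$ in Notation \ref{not:pf} presupposes $\delta\ge 1$.
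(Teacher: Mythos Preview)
Your proposal is correct and follows essentially the same approach as the paper: assume a monic annihilator $\cd'$ of order at most $\delta$, use non-degeneracy of the pairing between primitive homology and primitive cohomology to deduce $\res(\cd'\omega)=0$, conclude $[\cd'\omega]_{\GD}=0$, and invoke the minimality of $\delta$ to force $\cd'=\cd$. Your write-up is more careful about the subtleties (local definition of $\gamma_i(t)$, the case $\delta\ge 1$), but the argument is the same.
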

 \begin{proof}
   Suppose $\cd'\in \Cc(t)[\del_t]$ is another monic operator annihilating $\sigma(t)$ with order no greater than the order of $\cd$. Then the form $\res (\cd'\omega)$ integrates to 0 over all $\gamma_i(t)$. But $\gamma_i(t)$ span the primitive homology, which is dual to the primitive cohomology to which $\res \omega$ belongs. This forces $\res (\cd' \omega)= 0$ and thus $[\cd'(\del_t)\omega]_{\GD}=0$. By definition of $\cd$ we must have $\cd=\cd'$.
 \end{proof}

 The coefficients of $\cd$ may have poles at $t=0$. However, the poles at $t=0$ are always mild, which for us means that the space of solutions of $\cd=0$ is generated by holomorphic functions around $t=0$. Before we start this proof, define the following space of functions generated by periods associated to $\omega$:
\[
 \cl=\Cc\langle \sigma_{i}(t) \mid i=1,\dots,m \rangle.
\]

\begin{proposition}\label{prop:sols_are_holo}
   The solution space of $\cd=0$ near $t=0$ is $\cl$. In particular, all solutions of this ODE are holomorphic at $t=0$.
 \end{proposition}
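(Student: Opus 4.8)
The plan is to identify the germs of solutions of $\cd=0$ at $t=0$ with $\cl$, and to read off holomorphy at the end. First, since $\cx_0$ is smooth there are no singular fibres over a small disc $\Delta\ni 0$, so the flat cycles $\gamma_i(t)$ and the form $\res\omega(t)$ are defined over all of $\Delta$ and each period $\sigma_i(t)=\int_{\gamma_i(t)}\res\omega(t)$ is holomorphic on $\Delta$; hence so is every element of $\cl$, and the ``in particular'' clause will be automatic once $\cl$ is shown to be the full solution space. One inclusion is the preceding Lemma: $\cd$ annihilates $\sigma(t)$, i.e.\ $\cd\sigma_i=0$ for all $i$, so $\cl\subseteq\mathrm{Sol}(\cd)$. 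Since the solution space of the order-$\delta$ operator $\cd$ is $\delta$-dimensional over $\Cc$, it remains to prove $\dim_\Cc\cl\ge\delta$, after which the two spaces agree.

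Suppose for contradiction $\dim_\Cc\cl=e<\delta$. Fix a $\Cc$-basis $g_1,\dots,g_e$ of $\cl$; since these are holomorphic on the connected set $\Delta$ and $\Cc$-linearly independent, their Wronskian is not identically zero, so $\cd' y:=\mathrm{Wr}(g_1,\dots,g_e,y)/\mathrm{Wr}(g_1,\dots,g_e)$ is a monic differential operator of order $e$ with meromorphic coefficients whose solution space is precisely $\cl$; in particular $\cd'\sigma_i=0$ for every $i$. Now I would run the proof of the preceding Lemma essentially verbatim. Write $\cd'=\sum_{j=0}^{e}h_j(t)\del_t^j$ with $h_j$ meromorphic near $0$. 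Differentiation under the integral sign gives $\int_{\gamma_i(t)}\res(\omega^{(j)}(t))=\del_t^j\sigma_i(t)$, hence $\int_{\gamma_i(t)}\res(\cd'\omega(t))=\cd'\sigma_i(t)=0$ for all $i$; since the $\gamma_i(t)$ span $\PH_n(\cx_t,\Zz)$ and the intersection pairing restricts to a perfect pairing of primitive homology with primitive cohomology, in which $\res(\cd'\omega(t))$ lies, this forces $\res(\cd'\omega(t))=0$, and therefore $[\cd'\omega(t)]_{\GD}=0$. Crucially, nothing in this step used that the coefficients of $\cd'$ are rational. Thus $\sum_{j=0}^{e}h_j(t)\,[\omega^{(j)}(t)]_{\GD}=0$, a meromorphic linear relation among the reduced forms $[\omega^{(0)}(t)]_{\GD},\dots,[\omega^{(e)}(t)]_{\GD}$.

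The crux is to promote this to a relation with coefficients in $\Cc(t)$. The reduced forms $[\omega^{(j)}(t)]_{\GD}$ lie in the finite-dimensional space $S$ with coordinates in $\Cc(t)$ relative to a fixed $\Cc$-basis, so they are columns of a matrix over $\Cc(t)$, and the rank of such a matrix is unchanged when $\Cc(t)$ is extended to the field of meromorphic germs at $0$, being cut out by the vanishing of minors, which already lie in $\Cc(t)$. Hence a nonzero meromorphic relation among these columns produces a nonzero $\Cc(t)$-linear relation among $[\omega^{(0)}(t)]_{\GD},\dots,[\omega^{(e)}(t)]_{\GD}$, i.e.\ a relation of the shape (\ref{eq:linear_relation}) of order at most $e<\delta$, contradicting the minimality of $\delta$. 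Therefore $\dim_\Cc\cl\ge\delta$, so $\cl=\mathrm{Sol}(\cd)$, and since each $\sigma_i$ is holomorphic at $0$ every solution of $\cd=0$ near $0$ is holomorphic there. I expect this last meromorphic-to-rational descent to be the real obstacle: it is exactly what upgrades the preceding Lemma, which only compares $\cd$ with other \emph{rational} operators, into a statement controlling \emph{every} solution germ, and it works only because the Griffiths--Dwork reductions live in a finite-dimensional $\Cc$-vector space, where linear algebra over $\Cc(t)$ detects everything.
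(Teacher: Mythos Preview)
Your proof is correct, and it takes a genuinely different route from the paper's. After the common inclusion $\cl\subseteq\mathrm{Sol}(\cd)$, the paper argues as follows: any $\Cc$-linear relation among the entries $\sigma_i$ persists under differentiation, so every derivative vector $\sigma^{(k)}(t)$ lies, for each $t$, in a fixed subspace of $\Cc^m$ of dimension $e=\dim_\Cc\cl$; since minimality of $\cd$ makes $\sigma,\sigma',\dots,\sigma^{(\delta-1)}$ linearly independent over $\Cc(t)$, one concludes $e\ge\delta$. You instead build the order-$e$ Wronskian operator $\cd'$ with meromorphic coefficients, rerun the pairing argument of the preceding Lemma to obtain $[\cd'\omega]_{\GD}=0$, and then descend this meromorphic relation to one over $\Cc(t)$ via the rank-of-a-matrix-over-a-subfield argument. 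The paper's route is shorter and sidesteps the Wronskian construction entirely. Your route has the compensating virtue of making explicit the point at which the $\Cc(t)$-rationality of the Griffiths--Dwork reductions is used: this is exactly what powers your descent step, and it is also what underlies the paper's terse final implication ``this forces $\dim_\Cc\cl\ge\delta$'', since for \emph{arbitrary} holomorphic vector-valued functions taking values in an $e$-dimensional subspace, $\Cc(t)$-linear independence alone would not bound their number by $e$.
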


 \begin{proof}
   By construction, $\cd$ annihilates the vector $\sigma(t)=(\sigma_1(t),\dots,\sigma_m(t))$. Therefore, $\cl$ is contained in the solution space of $\cd=0$. This implies $\dim_{\Cc} \cl\le \delta$.

   On the other hand, minimality of $\cd$ implies that the derivatives $\sigma(t),\sigma(t)^{(1)},\dots,\sigma(t)^{(\delta-1)}$ are \emph{not} linearly dependent over $\Cc(t)$. Any linear relation satisfied by the entries of $\sigma(t)$ would continue to be satisfied by the entries of the derivatives of $\sigma(t)$. In particular, all the derivatives $\sigma^{(k)}(t)$ must lie in a fixed sub-space of $\Cc^{m}$ having dimension $\dim \cl$. This forces $\dim_\Cc \cl\ge \delta$ completing the proof.
 \end{proof}

 \begin{remark}
   Numerous properties of Picard--Fuchs equations are explored in depth in the book~\cite{arnold-1984}. The previous proof is adapted from that of Theorem 12.2.1 in \emph{loc.\ cit.}
 \end{remark}

 \subsection{Computing the initial conditions}\label{sec:initial_conditions}

Since the Picard--Fuchs equation $\cd=0$ of degree $\delta$ defined in Notation~\ref{not:pf} may have a singularity at $t=0$, the first $\delta$ derivatives of $\sigma(t)$ at $t=0$ may not be sufficient to start integration. Nevertheless, Proposition~\ref{prop:sols_are_holo} implies that we can find a basis for the solution space whose power series expansions near $t=0$ will have the following leading terms:
\[
  t^{u_1}, t^{u_2},\dots,t^{u_{\delta}},
\]
where $u_i$'s are non-negative integers satisfying $0 \le u_1 < u_2 < \dots < u_{\delta}$. These integers are readily computed, as we will now show.

Evaluating a differential operator $\cd' \in \Cc(t)[\del_t]$ on $t^a$ for some indeterminate $a$ and expanding the result in power series around $t=0$ gives an expression of the form
\begin{equation*} 
 \cd(t^a) = \iota_\cd(a)t^k + \text{higher order terms.}
\end{equation*}

\begin{definition}
  The coefficient of the lowest order term appearing in $\cd'(t^a)$ is a polynomial $\iota_\cd(a)$ which is called the \emph{indicial polynomial of $\cd'$ at $t=0$}.
\end{definition}

\begin{lemma}
  The integers $u_1,\dots,u_{\delta}$ are precisely the roots of the indicial equation of $\cd$.
\end{lemma}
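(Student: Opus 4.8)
The plan is to reduce the statement to the elementary computation of how $\cd$ acts on a power series, leaning on Proposition \ref{prop:sols_are_holo}, which already hands us $\delta$ linearly independent holomorphic solutions $\phi_1,\dots,\phi_\delta$ whose leading terms are $t^{u_1},\dots,t^{u_\delta}$ with $0\le u_1<\dots<u_\delta$. First I would pin down the shape of $\iota_\cd$. Expanding each coefficient as a Laurent series $a_i(t)=\sum_{k\ge -m_i}c_{i,k}t^k$ with $c_{i,-m_i}\neq 0$ (so $m_i\le 0$ exactly when $a_i$ is regular at $0$), and using $\del_t^i(t^a)=a(a-1)\cdots(a-i+1)\,t^{a-i}$, one obtains
\[
  \cd(t^a)= a(a-1)\cdots(a-\delta+1)\,t^{a-\delta}+\sum_{i=0}^{\delta-1}a_i(t)\,a(a-1)\cdots(a-i+1)\,t^{a-i}.
\]
Put $\nu:=\max\bigl(\delta,\ \max_{0\le i\le \delta-1}(i+m_i)\bigr)$. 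Then the lowest power of $t$ that can occur in $\cd(t^a)$ is $t^{a-\nu}$, its coefficient $\iota_\cd(a)$ is a polynomial in $a$ of degree at most $\delta$, and the top monomial $a(a-1)\cdots(a-\delta+1)$ contributes to $\iota_\cd$ if and only if $\nu=\delta$.

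Next I would show each $u_j$ is a root of $\iota_\cd$. Write $\phi_j(t)=\sum_{k\ge 0}c_kt^{u_j+k}$ with $c_0\neq 0$. Applying $\cd$ term by term, $\cd(t^{u_j+k})$ has order at least $u_j+k-\nu$, so in $\cd\phi_j=\sum_k c_k\cd(t^{u_j+k})$ the coefficient of the lowest power $t^{u_j-\nu}$ receives a contribution only from $k=0$, and that contribution equals $c_0\,\iota_\cd(u_j)$. Since $\cd\phi_j=0$ and $c_0\neq 0$, this forces $\iota_\cd(u_j)=0$. Hence $\iota_\cd$ has at least the $\delta$ distinct roots $u_1,\dots,u_\delta$.

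It remains to rule out further roots, and this is the only point that needs a little care, since a priori $t=0$ could be a wild singularity of $\cd$ (some $m_i>0$, hence $\nu>\delta$, and then the degree bound above only gives $\deg\iota_\cd\le\delta-1$). But $\iota_\cd$ is, by its very definition, the coefficient of a term that genuinely occurs, so $\iota_\cd$ is not the zero polynomial; a nonzero polynomial possessing $\delta$ distinct roots has degree at least $\delta$, so $\deg\iota_\cd=\delta$, which by the equivalence noted in the first step forces $\nu=\delta$. Thus $\iota_\cd$ is a nonzero polynomial of degree exactly $\delta$ whose multiset of roots contains the $\delta$ distinct values $u_1,\dots,u_\delta$, and therefore consists of exactly those values, each a simple root. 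This proves the lemma; in effect the argument shows simultaneously that $t=0$ is at worst a regular singular point of $\cd$, so no separate appeal to Fuchs's criterion is needed.
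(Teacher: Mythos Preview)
Your argument is correct and follows the same skeleton as the paper's proof: show that each $u_j$ is a root of $\iota_\cd$, then argue that there are no further roots because $\deg\iota_\cd=\delta$. The difference is in the level of detail. The paper simply asserts that ``the indicial equation has the same degree as $\cd$'' and stops there; this is not automatic from the definition of $\iota_\cd$ given just above the lemma, since at an irregular singular point the lowest-order coefficient of $\cd(t^a)$ would have degree strictly less than $\delta$. You supply exactly the missing justification: $\iota_\cd$ is nonzero by definition, and you have already produced $\delta$ distinct roots, so $\deg\iota_\cd\ge\delta$; combined with the a~priori bound $\deg\iota_\cd\le\delta$ this forces equality and, as you correctly note, simultaneously shows that $t=0$ is at worst an apparent (regular) singularity. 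One small point worth making explicit in your write-up: in Step~1 you identify $\iota_\cd(a)$ with the coefficient of $t^{a-\nu}$, which presupposes that this coefficient is not the zero polynomial. This is immediate since the falling factorials $a(a-1)\cdots(a-i+1)$ for different $i$ are linearly independent, but stating it closes the circle between Step~1 and the appeal to the definition in Step~3.
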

\begin{proof}
  It is clear that each $u_i$ must be a root of the indicial equation. There can be no other root since the indicial equation of $p$ has the same degree as $\cd$.
\end{proof}

To start integrating our Picard--Fuchs equation $\cd=0$, we need only compute the following integrals:
\[
  \sigma_{i}^{(k)}(0) = \int_{\gamma_i} \res \omega^{(k)}(0),\, \quad k \in \{u_1,\dots,u_{\delta}\},\, i=1,\dots,m.
\]

\section{The algorithm} \label{sec:algorithm}

Let $X = Z(f_X) \subset \Pp^{n+1}$ be a smooth hypersurface of degree $d$. We want to compute a period matrix for $X$. The algorithm outlined in the introduction gives this period matrix in theory by deforming $X$ to the Fermat hypersurface $Z(\sum_{i=0}^{n+1} x_i^d)$. However, we can get better performance by making full use of the inductive nature of the algorithm. In this section, we will give a more effective strategy and clarify some of the steps that were only sketched in the introduction. 

Roughly, what works well is to approach $X$ by a sequence of hypersurfaces $Z(g_0),\dots,Z(g_{n+1})$ where $g_{r+1}:=f_X$, each of the hypersurfaces $Z(g_i)$ are smooth, each consecutive pair of hypersurfaces $(Z(g_i),Z(g_{i+1}))$ are ``close to each other'' and we start with $Z(g_0)$ of \emph{Fermat type}.

\begin{definition}
  A hypersurface is called of \emph{Fermat type} if its defining equation is a sum of powers with arbitrary coefficients, 
 \[
   c_0 x_0^{d} + \dots + c_{n+1}x_{n+1}^d,
 \]
 where $c_i \in \Cc \setminus 0$ for all $i$.
\end{definition}

The notion of being ``close together'' is measured by the size of the \emph{support} of $g_{i+1}-g_i$ in the following sense.

\begin{definition}
  The \emph{support of a polynomial $p$} is the set of monomials which appear with non-zero coefficient in $p$.
\end{definition}

Once the hypersurfaces $Z(g_i)$ are constructed, the periods of $Z(g_1)$ can be deduced from the periods of the Fermat type hypersurface $Z(g_0)$, and the periods of $Z(g_2)$ deduced from the periods of $Z(g_1)$, etc. We will give heuristics on how to construct a sequence $(g_0,\dots,g_{r+1})$ which improves performance in Section~\ref{sec:good_sequence}. 

We mean by \emph{period homotopy} the process of determining a period matrix for one hypersurface from the period matrix of another hypersurface. We give details on how to perform period homotopy in Section~\ref{sec:period_homotopy}. 

\subsection{Constructing a good sequence}\label{sec:good_sequence} 

In theory, one may apply period homotopy to any sequence of smooth hypersurfaces. However, in practice, some sequences give vastly superior performance over others. We describe our method of constructing a sequence which tends to perform well.

We start with $g_0 = \sum_{i=0}^{n+1}c_i x_i^d$ whose coefficients $c_i \in \Cc \setminus \{0\}$ are designed to match the coefficients of $f_X$ as closely as possible: If the monomial $x_i^d$ appears with non-zero coefficient in $f_X$ then we will take $c_i$ to be this coefficient. Otherwise, we will randomly generate $c_i$ such that its order of magnitude matches the size of the coefficients of $f_X$. The rest of the sequence is constructed inductively, by following the set of heuristics below. 

\begin{namedNum}{Heuristics}
  Guiding heuristics while constructing the polynomial $g_i$ from $g_{i-1}$:
  \begin{enumerate}
    \item The support of $f_X - g_i$ must be strictly contained in the support of $f_X - g_{i-1}$. 
    \item The support of $g_i - g_{i-1}$ should be small. 
    \item The support of $g_i$ should be small. 
  \end{enumerate}
\end{namedNum}

Note that the first heuristic guarantees that the process terminates. The second heuristic makes sure that the derivatives of forms defined over the family $(1-t)g_{i-1}+tg_i$ do not get unnecessarily complicated. The third heuristic is intended to make the partial derivatives of $g_i$ simple, which plays a role in the Griffiths--Dwork reduction. 

These heuristics are crude. There may be different paths from $g_0$ to $f_X$ with the size of the relevant supports equal at each step, but giving very different results in terms of the complexity of the Picard--Fuchs equations (see Definition~\ref{def:order-degree}). As a compromise, we facilitate discovery by randomizing the construction process. 

We implemented a randomized greedy algorithm, whose details will not be given here, but the code is available in the source where the function is called {\tt BreakingPath}\footnote{\url{https://github.com/emresertoz/PeriodSuite/suite.mag}}.

\begin{remark}
  A theoretical investigation of what makes a good sequence should be interesting in its own right and, in particular, would be of immense value for this algorithm. See Section~\ref{sec:examples} for examples demonstrating the impact of making a good choice for a path of deformation.
\end{remark}

\subsection{Period homotopy}\label{sec:period_homotopy}

Let $X=Z(f_X)$ and $Y=Z(f_Y)$ be smooth hypersurfaces in $\Pp^{n+1}$ of degree $d$, with $f_X$ and $f_Y$ reduced. Suppose that $\cp_Y$ is a period matrix of $Y$ computed using an ordered residue basis $\omega_Y:=(\tilde \omega_1,\dots,\tilde \omega_N)$, see Definition~\ref{def:residue_basis} for this notion. The particular homology basis used in the computation of $\cp_Y$ is irrelevant. 

We will now give an algorithm which takes as input the tuple $(f_X,f_Y,\cp_Y,\omega_Y)$ and gives as output a tuple $(\cp_X,\omega_X)$ where $\cp_X$ is a period matrix of $X$ computed using an ordered residue basis $\omega_X$.

It will be convenient to use the following notation throughout, so as to avoid computing pole orders explicitly within the text.

\begin{notation}
  Let $p,f \in \Cc[x_0,\dots,x_{n+1}]$ be homogeneous polynomials such that $\deg p = d \ell  f -n -2$ where $\ell$ is a positive integer. Then we will define the following form:
  \[
    \varphi(p,f) := \frac{p}{f^\ell}\Omega.
  \]
  We linearly extend this map to allow for non-homogeneous $p$.
\end{notation}

\subsection*{Step 1: Finding a simultaneous residue basis} 

Let us recall some notation from Section~\ref{sec:basics}. We define $R$ to be the polynomial ring $\Cc[x_0,\dots,x_{n+1}]$, $R_s \subset R$ the subspace of homogeneous degree $s$ polynomials in $R$. For a homogeneous ideal $I$ in $R$, $I_s$ is the intersection $I \cap R_s$. As in Notation~\ref{not:jacobian} we will use $J(f)$ to denote the Jacobian ideal of a polynomial $f$.

\begin{definition}
  Let $p_1,\dots,p_N \in R$ be polynomials such that the sets of forms $\{\varphi(p_i,f_X) \mid i=1,\dots,N\}$  and  $\{\varphi(p_i,f_Y) \mid i=1,\dots,N\}$ are residue bases for $X$ and $Y$ respectively. Then we will call $\{p_1,\dots,p_N\}$ a \emph{simultaneous residue basis for $X$ and $Y$}.
\end{definition}

Using Proposition~\ref{prop:GD-basis_residues} we can find a simultaneous residue basis for $X$ and $Y$. This requires that we find, for each $\ell = 1, \dots, n+1$, a set of polynomials in $R_{d \ell -n-2}$ which descend to bases in both of the following vector spaces:
\begin{itemize}
  \item $R_{d \ell - n- 2}/J(f_X)_{d\ell -n -2}$,
  \item $R_{d \ell - n- 2}/J(f_Y)_{d\ell -n -2}$.
\end{itemize}
This is a problem in linear algebra and is readily solved. 

\begin{remark}
  Heuristically, a basis consisting of polynomials supported on as few monomials as possible tend to give faster performance during the computation of the differential equations. In general, there is no simultaneous residue basis for $X$ and $Y$ consisting only of monomials. However, a basis consisting of a mixture of monomial and binomial terms can always be found and this is what the function {\tt CompatibleCohomologyBasis} in {\tt PeriodSuite} finds.
\end{remark}

\subsection*{Step 2: Computing the Picard--Fuchs equations}

Consider the family of hypersurfaces $\cx_t \subset \Pp^{n+1}$ defined by $f_t = (1-t) f_Y + tf_X$ with parameter $t \in \Cc$. For each $i=1,\dots,N$ let $\omega_i(t)=\varphi(p_i,f_t)$, where $p_1,\dots,p_N$ form a simultaneous residue basis for $X$ and $Y$.

For each $i=1,\dots,N$, use the construction in Section~\ref{sec:PF} to obtain the Picard--Fuchs equation $\cd_i \in \Cc(t)[\del_t]$ of $\omega_i(t)$.

\subsection*{Step 3: Initial conditions}

With $\cd_i \in \Cc(t)[\delta_t]$ the Picard--Fuchs equation of $\omega_i(t)$, we will now compute the initial conditions for the associated period function ${}^i\sigma(t)$, defined as in Equation (\ref{eq:period_function}).

Let $u_1,\dots,u_k$ be the roots of the indicial equation of $\cd_i$. As explained in Section~\ref{sec:initial_conditions} the initial conditions we need are ${}^i\sigma^{(u_j)}(0)$ for $j=1,\dots,k$. These are the period vectors associated to $\omega_i^{(u_j)}(0)$ on $Y$.

Since we are given a residue basis $\tilde \omega_1,\dots,\tilde \omega_N$ as input, we may express $\omega_i^{(u_j)}(0)$ in terms of these basis elements by working with reduced forms. Fixing $i$ and $j$, suppose now that $a_1,\dots,a_N \in \Cc$ are computed such that: 
\[
  \omega_i^{(u_j)}(0) \equiv a_1 \tilde \omega_1 + \dots + a_N \tilde \omega_N,
\]
with equivalence being modulo exact forms.

We know the associated period vector for each of the forms in the residue basis; they are the rows of $\cp_Y$. Therefore, the linear expression of $\omega_i^{(u_j)}(0)$ computed above allows us to compute the corresponding period vector: 
\[
  {}^{i}\sigma^{(u_j)}(0) = 
  \cp_Y \begin{bmatrix}
    a_1 \\ \vdots \\ a_N
  \end{bmatrix}.
\]
Performing this operation for all $j$ gives us all of the initial conditions required to integrate $\cd_i$.

\begin{remark}
  Of course, when $Y$ is a Fermat type hypersurface, we do not need the period matrix for $Y$. It is faster to use the formula given in Theorem~\ref{thm:initials}. 
\end{remark}

\subsection*{Step 4: Numerical integration}

Construct a path $h:[0,1] \to \Cc$ such that $h(0)=0$ and $h(1)=1$, with the additional constraint that the $\cx_{h(u)}$ is smooth for every $u \in [0,1]$. 

\begin{remark}
  To satisfy the last requirement, we need to determine the values of $t$ for which $\cx_t$ is singular (or, determine a non-disconnecting set containing these singular values). A direct attack would be to compute the singular values of $f_t$ by elimination theory. This methods is \emph{potentially} expensive, but works well in practice. The second method would be to compute the poles of the coefficients of $\cd_i$ for each $i$. This second method is simpler to execute, but gives many more points than just the singular fibers of the family $\cx_t$, making it harder to construct $h$. A Voronoi cell decomposition can be used to find a path that stays as far away from the singularities as possible, improving the time of integration.\footnote{We implemented both path finding schemes in {\tt PeriodSuite}.} 
\end{remark}

We now need to integrate each $\cd_i$ over the path $h$ using the initial conditions computed in Step 3. The result of this integration will form the $i$-th row of the matrix $\cp_X$.

\begin{remark}
  The integration of $\cd_i$ for each $i$ should be performed using the same path $h$. Otherwise, one risks computing each row of $\cp_X$ using a different \emph{homology} basis!
\end{remark}

\subsection*{Output} Return $\cp_X$ and the residue basis $\{\varphi(p_i,f_X)\mid i=1,\dots,N\}$.

\subsection{Classical periods}\label{sec:classical_periods}

We need the period matrix for the intermediate hypersurfaces used to approach $X$ in order to perform period homotopy. However, since we are primarily interested in the Hodge structure on a variety, we do not need all of the period matrix on the target hypersurface $X$. 

\begin{definition}
  The period of a rational form with pole order $\ell \le \ceil{\frac{n}{2}}$ over $X$ will be called a \emph{classical period of $X$}. The portion of the period matrix formed by elements of the residue basis whose reductions have pole order $\ell \le \ceil{\frac{n}{2}}$ will be called a \emph{classical period matrix of $X$}.
\end{definition}

The notation is justified by allusion to curves and surfaces. In the case of curves, classically, the periods that were studied are only those corresponding to pole order $\ell =1$, which is sufficient for geometric applications. In the case of surfaces, again, one only considers pole order $\ell =1$: for instance one studies \emph{the} period vector of a K3 surface rather than its $22\times 21$ period matrix.

When we are computing the periods of a sequence of hypersurfaces with target $X$, it can be much faster to compute only the classical periods of $X$. The modification to the period homotopy need only be made in the following sense: the period matrices of intermediate hypersurfaces have to be computed as usual, but at the step when the periods of $X$ are computed, we determine and integrate only the Picard--Fuchs equations of residue basis elements with pole order $\ell \le \ceil{\frac{n}{2}}$.

\section{Periods of the Fermat hypersurface}  \label{sec:periods_of_fermat}

Let $f_Y = x_0^d + \dots + x_{n}^d - x_{n+1}^d$ and $Y := Z(f_Y) \in \Pp^{n+1}$ be the corresponding \emph{Fermat hypersurface}. The hypersurface $Y$ is simple enough to be studied in depth and, therefore, serves as our base point for period homotopy. We can scale each $x_i^d$ by a non-zero constant and the results of this section will carry through, see Section~\ref{sec:change_type}. 

\subsection{Pham cycles} \label{sec:pham_cycles}

Fix the following root of unity $\xi = e^{\frac{2\pi \sqrt{-1}}{d}}$. Let $G \subset \aut(Y)$ be the sub-group of automorphisms of $Y$ generated by scaling the coordinates by $\xi$. To be more explicit, let $R=\Zz[G]$ be the corresponding group ring and $\Zz[t_0,\dots,t_{n+1}]$ a polynomial ring. We will associate to $t_i$ the following action:
\[
  [x_0,\dots,x_i,\dots,x_{n+1}] \mapsto [x_0,\dots,\xi x_i,\dots,x_{n+1}].
\]
This defines a map
\[
  \Zz[t_0,\dots,t_{n+1}] \to R,
\]
where the kernel is given by the ideal $K$ generated by the following $n+3$ elements:
\[
  t_0\cdots t_{n+1} - 1,\quad t_i^d - 1 ,\, i=0,\dots,n+1.
\]
We will thus make the following identification without further mention:
\[
  \Zz[t_0,\dots,t_{n+1}]/K \simeq R.
\]

Note that we can write $t_{n+1}$ as $(t_0\cdots t_n)\inv = t_0^{d-1} \cdots t_n^{d-1}$ so we can eliminate $t_{n+1}$ when necessary; for instance, when we pass to the affine chart $x_{n+1} \neq 0$ of $\Pp^{n+1}$.

Let $U \subset \Pp^{n+1}$ be the affine chart $x_{n+1} = 1$ and $Y^o:=Y\cap U \subset Y$ be the corresponding locus, i.e., $Y^o = \set{(x_0,\dots,x_n) \mid x_0^d+\dots+x_n^d=1}$.

\begin{definition}
  Let $D:= \set{(s_0,\dots,s_n) \mid s_i \in [0,1] \subset \Rr,\, s_0^d + \dots + s_n^d = 1} \subset Y^o$. 
\end{definition}

\begin{remark}
  The space $D$ is homeomorphic to the $n$-simplex $s_0 + \dots + s_n = 1$. 
\end{remark}

Let $S:= (1-t_0\inv)\cdots(1-t_n\inv)D$, which is to be viewed as an $n$-chain in $Y^o$. By a straightforward computation, one can check that the boundary $\del S$ is zero, thus $[S] \in \H_{n}(Y^o,\Zz)$. Since $G$ acts on $Y$ as well as $Y^o$, we may view $\H_n(Y^o,\Zz)$ and $\H_n(Y,\Zz)$ as $R = \Zz[G]$ modules via the induced covariant action on homology.

\begin{notation}
  Define the $R$-module homomorphism $\varphi: R \to \H_{n}(Y^o,\Zz)$ given by $1 \mapsto [S]$.
\end{notation}

\begin{theorem}[Theorem 1~\cite{pham--fermat}]\label{thm:pham}
  The morphism $\varphi$ is surjective, with kernel given by:
  \[
    \ker \varphi = (1+t_i+\dots+t_i^{d-1} \mid i=0,\dots,n).
  \]
\end{theorem}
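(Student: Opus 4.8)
The plan is to prove this by recognizing $Y^o$ as (homotopy equivalent to) an iterated topological join and then reading off the $\Zz[G]$-module structure of its middle homology, following Pham. Note that $g := x_0^d + \dots + x_n^d$ is a Brieskorn polynomial --- weighted homogeneous with an isolated singularity at the origin --- and $Y^o = g^{-1}(1)$ is its Milnor fiber. By the Thom--Sebastiani principle, applied inductively in the number of variables (each summand $x_i^d$ having Milnor fiber the $d$ points $\mu_d \subset \Cc$, on which $t_i$ acts by multiplication by $\xi$), $Y^o$ is $G$-equivariantly homotopy equivalent to the join $J := \mu_d * \cdots * \mu_d$ of $n+1$ copies of $\mu_d$, with $t_i$ acting on the $i$-th factor. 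Concretely one realizes $J$ inside $Y^o$ as the set of points $x_i = s_i^{1/d}\zeta_i$ with $s_i \in [0,1]$, $\sum_i s_i = 1$, $\zeta_i^d = 1$ (using the nonnegative real $d$-th root, and noting that when $s_i = 0$ the value of $\zeta_i$ is irrelevant since $x_i = 0$), and constructs a $G$-equivariant deformation retraction of $Y^o$ onto it. The Pham chain $S = (1 - t_0^{-1})\cdots(1 - t_n^{-1})D$ is already supported in $J$: indeed $D$, homeomorphic to an $n$-simplex, is exactly the fundamental simplex $\{\zeta_i = 1 \text{ for all } i\}$ of the join.

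Next I would compute $\H_*(J)$ as a $\Zz[G]$-module. The reduced homology of a join satisfies $\widetilde{\H}_k(A * B) \cong \bigoplus_{p+q=k-1}\widetilde{\H}_p(A) \otimes \widetilde{\H}_q(B)$ here (the $\mathrm{Tor}$ terms vanish because all homology in sight is free), so iterating shows $\widetilde{\H}_*(J)$ is concentrated in degree $n$, with $\widetilde{\H}_n(J) \cong \bigotimes_{i=0}^n \widetilde{\H}_0(\mu_d)$ and $G = \prod_i \langle t_i\rangle$ acting factor-wise. As a module over $\Zz[t_i]/(t_i^d - 1)$, the group $\widetilde{\H}_0(\mu_d)$ is cyclic, generated by the class of $[\mathrm{pt}] - [t_i^{-1}\mathrm{pt}]$ (this is precisely the $n=0$ case of the theorem, checked by hand), with annihilator $(1 + t_i + \dots + t_i^{d-1})$; hence $\widetilde{\H}_0(\mu_d) \cong \Zz[t_i]/(1 + t_i + \dots + t_i^{d-1})$, free of rank $d-1$. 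Taking the tensor product over $i = 0,\dots,n$,
\[
  \widetilde{\H}_n(J) \;\cong\; \bigotimes_{i=0}^n \Zz[t_i]/(1 + t_i + \dots + t_i^{d-1}) \;\cong\; R/\mathfrak{a}, \qquad \mathfrak{a} := (1 + t_i + \dots + t_i^{d-1} : i = 0,\dots,n),
\]
a free $\Zz$-module of rank $(d-1)^{n+1}$, with cyclic generator the image of $\bigotimes_i\big([\mathrm{pt}] - [t_i^{-1}\mathrm{pt}]\big)$. Under the isomorphism $\H_n(Y^o,\Zz) \cong \widetilde{\H}_n(J)$ coming from the retraction, this generator corresponds to $[S] = \varphi(1)$ --- the product structure of $S$ matches the tensor-product description --- so $\varphi$ is the composite $R \twoheadrightarrow R/\mathfrak{a} \overset{\sim}{\to} \H_n(Y^o,\Zz)$, which yields both surjectivity of $\varphi$ and $\ker\varphi = \mathfrak{a}$.

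If one wishes to minimize what the retraction must deliver, the identification of $\ker\varphi$ can be finished by a rank count. The inclusion $\mathfrak{a} \subseteq \ker\varphi$ is immediate at the chain level: since $t_i^{d-1} = t_i^{-1}$ in $R$ one has $(1 + t_i + \dots + t_i^{d-1})(1 - t_i^{-1}) = t_i^{d-1} - t_i^{-1} = 0$, and $S$ carries the factor $(1 - t_i^{-1})$, so $(1 + t_i + \dots + t_i^{d-1})\,S = 0$ as a chain. On the other hand $R/\mathfrak{a}$ is visibly $\Zz$-free of rank $(d-1)^{n+1}$, while $\H_n(Y^o,\Zz)$ is $\Zz$-free of the same rank --- the Milnor number of the Brieskorn singularity, equivalently the rank read off from $J$ --- and torsion-free. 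Granting that $\varphi$ is surjective (the translates $t^\beta S$, for $\beta$ running through representatives in $\{1,\dots,d-1\}^{n+1}$, map onto the evident $\Zz$-basis of $\widetilde{\H}_n(J)$), the induced surjection $R/\mathfrak{a} \twoheadrightarrow R/\ker\varphi = \H_n(Y^o,\Zz)$ is a surjection of free $\Zz$-modules of equal finite rank, hence an isomorphism; therefore $\ker\varphi = \mathfrak{a}$.

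The one genuinely topological step, and the main obstacle, is producing the $G$-equivariant deformation retraction of $Y^o$ onto the join model $J$ (equivalently, making Thom--Sebastiani equivariant and compatible with the explicit Pham chain). One must flow a general point of $Y^o$ into $J$ while staying inside $Y^o$, commuting with the coordinate-scaling action of $G$, and correctly handling the strata where some $x_i \to 0$; those strata are exactly the faces of the join and are where the construction is most delicate. Everything after that --- the join homology formula, the identification of $R/\mathfrak{a}$, and the chain identity $(1 + t_i + \dots + t_i^{d-1})S = 0$ --- is routine.
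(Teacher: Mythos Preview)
Your proposal is correct and is essentially Pham's original argument, which is exactly what the paper invokes: the statement is quoted as Theorem~1 of \cite{pham--fermat} and no independent proof is given in the paper. Your identification of the equivariant deformation retraction of $Y^o$ onto the join $\mu_d * \cdots * \mu_d$ as the substantive topological step, followed by the K\"unneth-for-joins computation of $\widetilde{\H}_n$ as $\bigotimes_i \Zz[t_i]/(1+t_i+\dots+t_i^{d-1})$, matches Pham's method precisely.
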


\begin{remark}\label{rem:middle_dimension}
  By standard arguments, one can show that the middle homology $\H_n(Y,\Zz)$ of a degree $d$ hypersurface in $\Pp^{n+1}$ is torsion-free and of rank  
  \[
    \sum_{i=0}^{n} (-1)^i {n+2 \choose i} d^{n+1-i} + (-1)^{n+1} 2 \left\lceil \frac{n}{2} \right\rceil.
  \]
\end{remark}

Any cycle in $\gamma \in \H_{n}(Y^o,\Zz)$ can be viewed as a cycle in $\H_{n}(Y,\Zz)$, but more is true: $\gamma$ will not intersect the hyperplane class. Therefore, the natural inclusion map $Y^o \to Y$ defines a map $\xj:\H_{n}(Y^o,\Zz) \to \PH_{n}(Y,\Zz)$. Furthermore, $\xj$ is surjective since any primitive cycle in $Y$ can be arranged so as not to intersect the hyperplane section $x_0 =0$.

We will need the following explicit description of the primitive cohomology. We learned this statement from an earlier draft of~\cite{degtyarev-16} but the final version no longer contains it. We take the liberty to repeat it here and give a different proof.

\begin{notation}
  Let $J$ be the ideal of $R$ generated by $(1+t_i+\dots+t_i^{d-1} \mid i=0,\dots,n+1)$. Notice that $J$ has one more generator than the kernel of $\varphi$ given in Theorem~\ref{thm:pham}.
\end{notation}

\begin{definition}
  Let $M$ be a $\Zz$-module. A submodule $N$ in a module $M$ is called \emph{primitive} if $M/N$ is torsion-free. The \emph{primitive closure} (or \emph{saturation}) of $N$, denoted $\pcl(N)$, is the smallest primitive submodule containing $N$. 
\end{definition}

\begin{corollary}\label{cor:primitive_basis}
  Composing the map $\varphi$ with the surjection $\xj:\H_{n}(Y^o,\Zz) \tos \PH_{n}(Y,\Zz)$ defines a surjective map $R \to \PH_{n}(Y,\Zz): 1 \mapsto [S]$. The kernel of this map is 
  \[
    \ker (\xj \circ \varphi) = \pcl(J),
  \]
  where $\pcl$ denotes the primitive closure of the ideal in $R$ viewed as a $\Zz$-submodule. 
\end{corollary}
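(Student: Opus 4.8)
The plan is to combine Pham's theorem (Theorem \ref{thm:pham}) with the description of $\PH_n(Y,\Zz)$ as a quotient of $\H_n(Y^o,\Zz)$ supplied by the surjection $\xj$, and then track what the composite kernel must be. First I would observe that $\xj \circ \varphi \colon R \to \PH_n(Y,\Zz)$ is surjective, since $\varphi$ is surjective by Theorem \ref{thm:pham} and $\xj$ is surjective by the remark preceding the statement (any primitive cycle can be pushed off the hyperplane section $x_0 = 0$). So the only content is the identification of the kernel. By the snake/exact-sequence bookkeeping, $\ker(\xj \circ \varphi) = \varphi^{-1}(\ker \xj)$, so I need to understand $\ker \xj$ inside $\H_n(Y^o,\Zz)$ and pull it back.

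The key structural point is that $\H_n(Y^o,\Zz) \cong R / \ker\varphi = R/(1 + t_i + \dots + t_i^{d-1} \mid i = 0,\dots,n)$ is \emph{torsion-free} of the correct rank (this follows from Theorem \ref{thm:pham} together with the standard fact, cf.\ Remark \ref{rem:middle_dimension}, that the middle homology of a smooth hypersurface is free), and that $\PH_n(Y,\Zz)$ is likewise torsion-free. Hence $\xj$ is a surjection of finitely generated free $\Zz$-modules, and its kernel is a primitive submodule of $\H_n(Y^o,\Zz)$ (the quotient being torsion-free). The next step is to pin down $\ker \xj$ precisely: I claim it is generated, as a submodule, by the single extra element $1 + t_{n+1} + \dots + t_{n+1}^{d-1}$, i.e.\ the image in $\H_n(Y^o,\Zz)$ of the cycle $(1 + t_{n+1}^{-1} + \dots + t_{n+1}^{-(d-1)})[S]$, \emph{up to saturation}. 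Geometrically this is because the kernel of $\xj$ measures exactly the difference between $\H_n$ of the affine piece $Y^o$ and the primitive homology of the projectivization $Y$; the classes that die are those supported near the hyperplane at infinity, and a rank count (comparing the rank of $R/\ker\varphi$ with the rank of $\PH_n(Y,\Zz)$ from Remark \ref{rem:middle_dimension}) shows the kernel has rank one and the natural candidate is the symmetrization over the cyclic group generated by $t_{n+1}$. I would make this precise by exhibiting the relation $1 + t_{n+1} + \dots + t_{n+1}^{d-1} \in \ker(\xj\circ\varphi)$ directly — this is a homological identity on the Fermat hypersurface expressing that the full orbit sum of $S$ under the extra coordinate becomes a boundary once one fills in at infinity — and then arguing by ranks and primitivity that nothing more is killed except what saturation forces.

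Putting it together: $\ker(\xj\circ\varphi)$ is a primitive submodule of $R$ (qua $\Zz$-module, since $\PH_n(Y,\Zz)$ is torsion-free) containing both $\ker\varphi = (1 + t_i + \dots + t_i^{d-1} \mid i = 0,\dots,n)$ and the extra generator $1 + t_{n+1} + \dots + t_{n+1}^{d-1}$; these together generate the ideal $J$. Since $\ker(\xj \circ \varphi)$ is primitive and contains $J$, it contains $\pcl(J)$. For the reverse inclusion, a rank computation — the rank of $R/\pcl(J)$ must equal $\rk R / \rk\ker\varphi$ minus one, which by Remark \ref{rem:middle_dimension} is exactly $\rk \PH_n(Y,\Zz)$ — forces $\ker(\xj\circ\varphi) = \pcl(J)$, because a primitive submodule is determined by its rank once we know it sits between $J$ and a given primitive module. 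The main obstacle I anticipate is the explicit verification that $1 + t_{n+1} + \dots + t_{n+1}^{d-1}$ lies in the kernel, i.e.\ that this particular combination of Pham cycles bounds in $Y$ but not in $Y^o$; this is the one genuinely geometric input and everything else is homological algebra plus the rank formula of Remark \ref{rem:middle_dimension}. One should also double-check that taking primitive closure is really necessary — i.e.\ that $J$ itself need not be primitive in $R$ as a $\Zz$-module — which is why the statement is phrased with $\pcl$ rather than $J$.
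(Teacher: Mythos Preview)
Your overall strategy matches the paper's: show $J \subset \ker(\xj\circ\varphi)$, upgrade to $\pcl(J) \subset \ker(\xj\circ\varphi)$ using that $\PH_n(Y,\Zz)$ is torsion-free, and then conclude equality by a rank comparison. However, your rank computation contains a genuine error.

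You assert that $\ker\xj$ has $\Zz$-rank one, and correspondingly that $\rk(R/\pcl(J))$ equals $\rk(R/\ker\varphi)$ minus one. This is false already for $n=1$, $d=3$: there $\rk(R/\ker\varphi) = (d-1)^{n+1} = 4$ while $\rk\PH_1(Y,\Zz) = 2$ (the Fermat cubic is an elliptic curve), so $\ker\xj$ has rank $2$, not $1$. You appear to be conflating two different statements: ``$J/\ker\varphi$ is generated by one element as an $R$-module'' (true --- the extra generator $1+t_{n+1}+\dots+t_{n+1}^{d-1}$ suffices, as an ideal) and ``$J/\ker\varphi$ has $\Zz$-rank one'' (false --- the $R$-ideal it generates in $R/\ker\varphi$ is typically large). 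The fix is to drop the ``minus one'' heuristic and instead compute $\dim_\Qq\bigl((R/J)\otimes\Qq\bigr)$ directly and compare it with $\dim\PH_n(Y,\Qq)$ from Remark~\ref{rem:middle_dimension}; this is exactly what the paper does. Once those ranks agree, your final step --- a primitive inclusion of equal-rank submodules is an equality --- is correct.

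On the step you flag as the ``main obstacle'', namely that $1+t_{n+1}+\dots+t_{n+1}^{d-1}$ lies in $\ker(\xj\circ\varphi)$: the paper handles this in one line via $G$-equivariance rather than by exhibiting an explicit bounding chain. The composite $R\to\PH_n(Y,\Zz)$ is a map of $R$-modules, so its kernel is an ideal, and one then argues that the element $1+t_{n+1}+\dots+t_{n+1}^{d-1}$ must annihilate $\PH_n(Y,\Zz)$. Your proposed direct geometric verification would also work but is more laborious than what the paper needs.
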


\begin{proof}
  The morphism $R \to \PH_n(Y,\Zz)$ is $G$-equivariant by design. Therefore its kernel must be a $G$-invariant ideal. Combining this observation with Theorem~\ref{thm:pham} we conclude that the kernel contains $J$. On the other hand, $\PH_n(Y,\Zz)$ is torsion-free which means that the kernel must contain the primitive closure of $J$.

  To show that the kernel cannot contain anything else we simply compute the dimension of $\PH_n(Y,\Qq)$ using the formula given in Remark~\ref{rem:middle_dimension} and see that it equals the dimension of $(R/\pcl(J)) \otimes_{\Zz} \Qq$.
\end{proof}

\subsection{Making the residue map explicit} \label{sec:compute_residues}

Pick $\alpha := (a_0,\dots,a_{n+1}) \in \Zz_{> 0}^{n+2}$ such that $|\alpha| = \ell d$ for an integer $\ell$. Recall $f_Y = x_0^d + \dots + x_n^d - x_{n+1}^d$ and $Y = Z(f_Y) \subset \Pp^{n+1}$. We define:
\begin{equation}\label{eq:monomial}
  \omega_\alpha := x_0^{a_0-1}\dots x_{n+1}^{a_{n+1}-1} \frac{\Omega}{f_Y^\ell}.
\end{equation}
Recall Griffiths' residue map from Section~\ref{sec:griffiths_residues}:
\[
  \res: \bigoplus_{\ell \ge 1} \H^0(\omega_{\Pp^{n+1}}(\ell [Y])) \to \PH^n(Y,\Cc).
\]
In this section we will make this residue map explicit, by associating to $\omega_\alpha$ a meromorphic form $\eta_\alpha$ on $Y$. The poles of $\eta_\alpha$ will be positioned along a hyperplane section of $Y$, so that $\eta_\alpha$ represents a primitive cohomology class on $Y$. 

\begin{remark}
  A general treatment of the residue map appears in~\cite{carlson-80} from which explicit formulas like the ones below may be derived algebraically.
\end{remark}

\begin{lemma}\label{lem:residue}
  Let $\lambda \in \Cc \setminus \{0\}$ be a non-zero number, $a \in \Zz_{>0}$ and $0 < \varepsilon \ll 1$. Then, we have the following residue formula:
  \begin{equation*}
    \frac{1}{2\pi i} \int_{|z-\lambda| = \varepsilon} \frac{z^{a-1}\dd z}{(\lambda^d-z^d)^\ell} = \frac{(-1)^\ell}{d^\ell (\ell-1)!} \prod_{j=1}^{\ell-1} (a-jd)\lambda^{a-\ell d}.
  \end{equation*}
\end{lemma}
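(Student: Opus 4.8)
The plan is to compute the contour integral
\[
  I := \frac{1}{2\pi i} \int_{|z-\lambda|=\varepsilon} \frac{z^{a-1}\,\dd z}{(\lambda^d-z^d)^\ell}
\]
by residues. For $\varepsilon$ small enough the only singularity of the integrand inside the contour is $z=\lambda$, and it is a pole of order $\ell$ coming from the factor $(\lambda^d-z^d)^\ell$, since $z=\lambda$ is a simple zero of $\lambda^d-z^d$ (its derivative there is $-d\lambda^{d-1}\neq 0$). So $I$ equals the residue of the integrand at $z=\lambda$.

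First I would make the order of the pole manifest by writing $\lambda^d-z^d=(\lambda-z)\,g(z)$ with $g(z)=\sum_{k=0}^{d-1}\lambda^{d-1-k}z^{k}$, so that $g(\lambda)=d\lambda^{d-1}$. Then
\[
  \frac{z^{a-1}}{(\lambda^d-z^d)^\ell}=\frac{1}{(\lambda-z)^\ell}\cdot\frac{z^{a-1}}{g(z)^\ell},
\]
and the residue at $z=\lambda$ is $\tfrac{(-1)^{\ell-1}}{(\ell-1)!}\,h^{(\ell-1)}(\lambda)$ where $h(z):=z^{a-1}g(z)^{-\ell}$; the sign $(-1)^{\ell-1}$ is because $(\lambda-z)^\ell=(-1)^\ell(z-\lambda)^\ell$. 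The cleanest route to $h^{(\ell-1)}(\lambda)$ is to observe that near $z=\lambda$ one has $g(z)^{-\ell}=\bigl(\tfrac{\lambda-z}{\lambda^d-z^d}\bigr)^\ell$, but it is simpler to substitute and expand: set $z=\lambda(1+w)$ with $w$ a small parameter. Then $z^d=\lambda^d(1+w)^d$, so $\lambda^d-z^d=-\lambda^d\bigl((1+w)^d-1\bigr)=-\lambda^d\bigl(dw+O(w^2)\bigr)$, hence
\[
  (\lambda^d-z^d)^\ell=(-1)^\ell\lambda^{d\ell}(dw)^\ell\bigl(1+O(w)\bigr)^\ell .
\]
Likewise $z^{a-1}\,\dd z=\lambda^{a-1}(1+w)^{a-1}\cdot\lambda\,\dd w=\lambda^{a}(1+w)^{a-1}\dd w$. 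Therefore
\[
  I=\frac{(-1)^\ell}{d^\ell}\,\lambda^{a-d\ell}\cdot\frac{1}{2\pi i}\oint \frac{(1+w)^{a-1}}{w^{\ell}}\cdot\frac{\dd w}{\bigl(\tfrac{(1+w)^d-1}{dw}\bigr)^{\ell}},
\]
and the integral is the coefficient of $w^{\ell-1}$ in the power series of $(1+w)^{a-1}\bigl(\tfrac{dw}{(1+w)^d-1}\bigr)^{\ell}$ around $w=0$.

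It therefore remains to extract that coefficient and check it equals $\tfrac{1}{(\ell-1)!}\prod_{j=1}^{\ell-1}(a-jd)$. For this I would avoid dealing with the awkward series $\tfrac{dw}{(1+w)^d-1}$ directly and instead go back to the substitution $u=z^d$ or, equivalently, recognize that the whole computation is governed by the binomial series: writing $z^{a-1}=(z^d)^{(a-1)/d}$ is not integral, so better is to note $(\lambda^d-z^d)^{-\ell}=\lambda^{-d\ell}(1-(z/\lambda)^d)^{-\ell}$ is holomorphic away from $|z|=|\lambda|$ and, on $|z-\lambda|=\varepsilon$, one may expand in the \emph{other} variable: set $v=1-(z/\lambda)^d$, a local coordinate vanishing simply at $z=\lambda$, with $z=\lambda(1-v)^{1/d}$ and $\dd z=-\tfrac{\lambda}{d}(1-v)^{1/d-1}\dd v$. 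Then
\[
  I=\frac{1}{2\pi i}\oint \frac{\lambda^{a-1}(1-v)^{(a-1)/d}}{\lambda^{d\ell}v^{\ell}}\cdot\Bigl(-\frac{\lambda}{d}\Bigr)(1-v)^{1/d-1}\dd v
   =-\frac{\lambda^{a-d\ell}}{d}\cdot\frac{1}{2\pi i}\oint\frac{(1-v)^{a/d-1}}{v^\ell}\,\dd v,
\]
so $I=-\tfrac{\lambda^{a-d\ell}}{d}\cdot[\,v^{\ell-1}\,](1-v)^{a/d-1}=-\tfrac{\lambda^{a-d\ell}}{d}\binom{a/d-1}{\ell-1}(-1)^{\ell-1}$. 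Finally I would simplify $(-1)^{\ell-1}\binom{a/d-1}{\ell-1}=\tfrac{1}{(\ell-1)!}\prod_{j=1}^{\ell-1}\bigl(j-a/d\bigr)=\tfrac{(-1)^{\ell-1}}{d^{\ell-1}(\ell-1)!}\prod_{j=1}^{\ell-1}(a-jd)$, which combines with the leading $-\tfrac{1}{d}$ and the overall $(-1)^{\ell-1}$ into the claimed $\tfrac{(-1)^\ell}{d^\ell(\ell-1)!}\prod_{j=1}^{\ell-1}(a-jd)\,\lambda^{a-\ell d}$. The only real subtlety — the step I expect to need the most care — is justifying that $v=1-(z/\lambda)^d$ is a legitimate local coordinate and that the fractional powers $(1-v)^{a/d-1}$ are harmless single-valued holomorphic functions near $v=0$ (they are, since we only need the germ at $v=0$, where we may pick the branch with $(1-v)^{1/d}\to 1$); after that the residue extraction is just the generalized binomial theorem.
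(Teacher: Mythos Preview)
Your proof is correct and, once you settle on the substitution $v=1-(z/\lambda)^d$, it is essentially the same argument as the paper's: the paper substitutes $z=\lambda u^{1/d}$ (your $u=1-v$) and then applies Cauchy's differentiation formula to $u^{a/d-1}/(1-u)^\ell$, which amounts to your extraction of $[v^{\ell-1}](1-v)^{a/d-1}$ via the binomial series. The only difference is presentational: the paper goes straight to the substitution, whereas you take a detour through $z=\lambda(1+w)$ before finding the clean change of variable.
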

\begin{proof}
  Since $\lambda$ is non-zero and $\varepsilon$ is small, we can choose a branch of the $d$-th root function such that the substitution $z=\lambda u ^{1/d}$ is well defined, where $u$ is restricted to a neighbourhood of 1. After performing this substitution, the integrand obtains a form suitable for applying Cauchy's differentiation formula and the result follows\footnote{We thank the \emph{mathoverflow} user \emph{GH from MO} for spotting this simple proof.}.
\end{proof}

\begin{notation}
  Let $c_{d,\ell,a}$ be the value of the integral given in Lemma~\ref{lem:residue} with $\lambda=1$.
\end{notation}

\begin{definition}\label{def:eta}
  A tuple $\alpha = (a_0,\dots,a_{n+1}) \in \Zz^{n+2}_{>0}$ satisfying $\sum_{i=0}^{n+1} a_i \equiv 0 \imod d$ will be called an \emph{admissible index}. For an admissible index $\alpha$ we define the form:
  \[
    \eta_{\alpha} := \left.\left( x_0^{a_0-1}\cdots x_{n-1}^{a_{n-1}-1} x_{n}^{a_n-d} \dd x_0 \cdots \dd x_{n-1} \right) \right|_Y.
  \]
\end{definition}

\begin{proposition}\label{prop:explicit_residue}
  For an admissible index $\alpha$, we have an equivalence modulo exact forms:
  \begin{equation*}
    \res \omega_\alpha \equiv c_{d,\ell,a_{n+1}}\eta_\alpha.
  \end{equation*}
\end{proposition}
\begin{proof}
  In order to end up with poles that are favorably positioned, we will begin working in the affine chart $U_n := \{x_n \neq 0\} \subset \Pp^{n+1}$ and later pass to the affine chart $U_{n+1} := \{x_{n+1} \neq 0\} \subset \Pp^{n+1}$. Let $u_i = \frac{x_i}{x_n}$ denote the coordinate functions on $U_n$. We then have: 
  \[
    \omega_\alpha|_{U_n} = \frac{u_0^{a_0-1}\cdots u_{n-1}^{a_{n-1}-1}u_{n+1}^{a_{n+1}-1}}{(u_0^d+\dots+u_{n-1}^{d}+1-u_{n+1}^d)^\ell}\dd u_0\dots \dd u_{n-1} \dd u_{n+1}.
  \]

  On the open locus where $u_{n+1}\neq 0$, the tangent vector $\frac{\del}{\del u_{n+1}}$ does not belong to the tangent space of $Y$. Therefore we can construct a tubular neighbourhood around $\tilde Y := Y \cap \{x_n x_{n+1} \neq 0\}$ by using an $S^1$-fibration $\pi:\tau(\tilde Y) \to \tilde Y$ where $\tau(\tilde Y) \subset \Pp^{n+1} \setminus Y$ and each fiber of $\pi$ is a (small) circle in the direction of $\frac{\del}{\del u_{n+1}}$.

Integrating $\omega_\alpha$ along this $S^1$-fibration, we can push it down to a form on $\tilde Y$ which we will denote by $\pi_! \omega_\alpha$. Fix a point $p=[u_0,\dots,u_{n-1},1,u_{n+1}] \in Y$ and notice that $u_{n+1}^d = u_0^d + \dots + u_{n-1}^d +1$. The integral over the fiber of $\pi:\tau(\tilde Y) \to \tilde Y$ over $p$ will give:
\[
  \pi_! \omega_{\alpha}|_p = u_0^{a_0-1}\cdots u_{n-1}^{a_{n-1}-1} \left(\int_{|z-u_{n+1}| < \varepsilon} \frac{z^{a_{n+1}-1} \dd z}{(u_{n+1}^d - z^d)^\ell}\right) \dd u_0 \dots \dd u_{n-1},
\]
for small $\varepsilon >0$. Now we use Lemma~\ref{lem:residue} to conclude:
\[
\pi_! \omega_\alpha|_p = c_{d,\ell,a_{n+1}} u_0^{a_0-1}\cdots u_{n-1}^{a_{n-1}-1} u_{n+1}^{a_{n+1}-\ell d}\dd u_0 \dots \dd u_{n-1}.
\]

Due to the severity of the poles at $u_{n+1} =0$ we change coordinates to the affine chart $U_{n+1} := \{x_{n+1} \neq 0\} \subset \Pp^{n+1}$. Let us denote by $v_i=\frac{x_i}{x_{n+1}}$ the coordinate functions on $U_{n+1}$. Naturally, $v_{n+1} = 1$ and $v_n u_i = v_i$ for $i=0,\dots,n+1$. By mere substitution, and using the identity $a_0 + \dots + a_{n+1} = \ell d$, we have:
\[
  u_0^{a_0-1}\cdots u_{n-1}^{a_{n-1}-1}u_{n+1}^{a_{n+1}-\ell d} = v_0^{a_0-1}\cdots v_{n-1}^{a_{n-1}-1}v_{n}^{a_n + n}.  
\]
On the other hand, a straightforward computation reveals that the volume form changes in the following way:
\[
  \dd u_0 \dots \dd u_{n+1}  = v_n^{-n-d} \dd v_0 \cdots \dd v_{n-1}, 
\]
where we used both of the following identities on the hypersurface $Y$: $v_0^d + \dots + v_{n}^d = 1$ and $\dd v_n = \frac{v_0^{d-1}}{v_{n}^{d-1}}\dd v_0 + \dots + \frac{v_{n-1}^{d-1}}{v_n^{d-1}}\dd v_{n-1}$.

Putting these two together gives us the desired result. Note that the tubular neighbourhood gets infinitely thin as we approach the locus $v_n=0$. Nevertheless, we can extend our representation of $\pi_!\omega_\alpha$ to the locus $v_n =0$ simply because it has no poles there, as can be verified by substituting $\dd v_i = \frac{v_n^{d-1}}{v_i^{d-1}}\dd v_n + \dots$ for any one of the $v_i$ appearing in the volume form.
\end{proof}

Combining Section~\ref{sec:griffiths_residues} with Proposition~\ref{prop:explicit_residue} we conclude that a generating set for the primitive cohomology $\PH^n(Y,\Cc)$ is given by the set of forms $\eta_\alpha$ as $\alpha$ ranges through admissible indices.

\subsection{Residues over Pham cycles}

Recall from Section~\ref{sec:pham_cycles} that $D = \set{(s_0,\dots,s_n) \mid s_i \in [0,1] , s_0^d + \dots + s_n^d =1} \subset Y^o$ and $S = (1-t_0\inv)\cdots(1-t_n\inv) D$ is the Pham cycle generating the primitive integral homology $\PH_n(Y,\Zz)$ under the action of the group ring $R = \Zz[t_0,\dots,t_{n+1}]$.

\begin{notation}
  For a given integer sequence $\beta=(\beta_0,\dots,\beta_{n+1}) \in \Zz^{n+2}$ denote by $t^\beta$ the element $t_0^{\beta_0}\cdots t_{n+1}^{\beta_{n+1}}$ of $R$.  
\end{notation}

Then for any $\beta$ and any admissible $\alpha$ we wish to compute the integral:
\[
  \sigma_{\alpha,\beta}:=\int_{t^\beta S} \eta_\alpha,
\]
where $\eta_\alpha$ is given in Definition~\ref{def:eta}.

Let $\Gamma(x) = \int_0^\infty e^{-t} t^{x-1} \dd t$ denote the Euler gamma function. Recall the identity (\cite[pg. 19]{artin--gamma}):
\begin{align} \label{eqn:euler_beta}
  \int_0^1 t^{a-1}(1-t)^{b-1} \dd t &= \frac{\Gamma(a)\Gamma(b)}{\Gamma(a+b)},\quad a,b \in \Rr_+.
\end{align}

\begin{lemma} \label{lem:integral}
  The integral of $\eta_\alpha$ over $D$ has the following value:
  \[
    \int_D \eta_\alpha = \frac{\prod_{i=0}^n \Gamma(\frac{\alpha_i}{d})}{d^{n}\Gamma(\sum_{i=0}^n \frac{\alpha_i}{d})}.
  \]
\end{lemma}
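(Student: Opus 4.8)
The plan is to make the chain $D$ explicit in affine coordinates, pull back $\eta_\alpha$, reduce the resulting real integral to a Dirichlet-type integral over the standard simplex by the change of variables $u_i = s_i^d$, and finally evaluate that integral by iterating the Euler beta integral \eqref{eqn:euler_beta}.

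First I would coordinatize $D$. On the chart $x_{n+1}=1$ we have $Y^o = \{x_0^d+\dots+x_n^d = 1\}$, so $D$ is the set of $(s_0,\dots,s_n)$ with $s_i \in [0,1]$ and $\sum_{i=0}^n s_i^d = 1$. Projecting away the last coordinate gives a homeomorphism of $D$ onto $\Delta := \{(s_0,\dots,s_{n-1}) : s_i \ge 0,\ \sum_{i=0}^{n-1} s_i^d \le 1\}$, with $s_n = \bigl(1 - \sum_{i<n} s_i^d\bigr)^{1/d}$ the nonnegative real root; the orientation of $D$ is (by convention) the one for which this projection is compatible with the standard orientation $\dd s_0\wedge\cdots\wedge\dd s_{n-1}$ of $\Delta$. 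Substituting $x_i = s_i$ into Definition \ref{def:eta} and writing $s_n^{a_n-d} = \bigl(1-\sum_{i<n}s_i^d\bigr)^{a_n/d - 1}$ gives
\[
  \int_D \eta_\alpha = \int_{\Delta} \Bigl(\prod_{i=0}^{n-1} s_i^{a_i-1}\Bigr)\Bigl(1 - \sum_{i=0}^{n-1} s_i^d\Bigr)^{\frac{a_n}{d}-1}\, \dd s_0\cdots \dd s_{n-1}.
\]
Because each $a_i$ is a positive integer, the exponents satisfy $a_i - 1 \ge 0$ and $\tfrac{a_n}{d}-1 > -1$, so the integrand is absolutely integrable on $\Delta$ and all manipulations below are justified.

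Next I would apply the change of variables $u_i = s_i^d$ ($i=0,\dots,n-1$), a diffeomorphism on the interior of $\Delta$ under which $s_i^{a_i-1}\,\dd s_i = \tfrac1d\, u_i^{a_i/d-1}\,\dd u_i$; this transforms the integral into
\[
  \int_D \eta_\alpha = \frac{1}{d^{n}} \int_{\Delta'} \Bigl(\prod_{i=0}^{n-1} u_i^{\frac{a_i}{d}-1}\Bigr)\Bigl(1 - \sum_{i=0}^{n-1} u_i\Bigr)^{\frac{a_n}{d}-1}\, \dd u_0\cdots \dd u_{n-1},
\]
with $\Delta'$ the standard simplex $\{u_i \ge 0,\ \sum_i u_i \le 1\}$. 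It then remains to establish the Dirichlet integral identity: for positive reals $p_0,\dots,p_n$,
\[
  \int_{\Delta'}\Bigl(\prod_{i=0}^{n-1} u_i^{p_i-1}\Bigr)\Bigl(1-\sum_{i=0}^{n-1}u_i\Bigr)^{p_n-1}\,\dd u_0\cdots \dd u_{n-1} = \frac{\prod_{i=0}^n \Gamma(p_i)}{\Gamma\bigl(\sum_{i=0}^n p_i\bigr)}.
\]
I would prove this by induction on $n$: fixing $u_0,\dots,u_{n-2}$ and setting $R = 1-u_0-\dots-u_{n-2}$, the substitution $u_{n-1}=Rv$ factors the innermost integral as $R^{\,p_{n-1}+p_n-1}\int_0^1 v^{p_{n-1}-1}(1-v)^{p_n-1}\,\dd v$, which by \eqref{eqn:euler_beta} equals $R^{\,p_{n-1}+p_n-1}\,\Gamma(p_{n-1})\Gamma(p_n)/\Gamma(p_{n-1}+p_n)$; this reduces the claim to the same integral in one fewer variable with $p_{n-1}$ replaced by $p_{n-1}+p_n$, and the case $n=1$ is \eqref{eqn:euler_beta} itself. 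Taking $p_i = a_i/d = \alpha_i/d$ then yields $\int_D \eta_\alpha = \tfrac{1}{d^n}\cdot\tfrac{\prod_{i=0}^n \Gamma(\alpha_i/d)}{\Gamma(\sum_{i=0}^n \alpha_i/d)}$, as asserted.

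The argument is essentially a bookkeeping exercise once the right substitution is identified, so I do not anticipate a genuine obstacle; the only points that require a little care are confirming that the projection $D \to \Delta$ respects the chosen orientations (so that the equality holds on the nose, with no sign), and that nothing is lost on the measure-zero coordinate loci where some $s_i = 0$ or $s_n = 0$ — both harmless given the integrability noted above.
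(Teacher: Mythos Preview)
Your proof is correct and follows essentially the same route as the paper: both reduce $\int_D\eta_\alpha$ to an iterated application of the Euler beta identity \eqref{eqn:euler_beta}, yielding the same telescoping product of gamma functions. The only organizational difference is that the paper makes a single change of variables $u_i = s_i^d/\prod_{j<i}(1-u_j)$ that separates the integrand into a product of independent one-variable integrals all at once, whereas you first substitute $u_i=s_i^d$ to land on the standard Dirichlet integral and then peel off one variable at a time by induction; these are the same computation in slightly different packaging.
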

\begin{proof}
  For $s \in [0,1] \subset \Rr$ the positive $d$-th root of $s$ is well defined and will be denoted by $s^{\frac{1}{d}}$. We will make use of this fact without further mention in the following substitutions.

  With the understanding that empty product equals 1, and for real coordinates $(s_0,\dots,s_n) \in D$ we introduce the following variables inductively:
  \[
    u_i := \frac{s_i^d}{\prod_{j=0}^{i-1} (1-u_j)},\quad i=0,\dots,n-1.
  \]
  We now have:
  \begin{align*}
    s_i &= u_i^{\frac{1}{d}}\prod_{j=0}^{i-1} (1-u_i)^{\frac{1}{d}},  \quad i=0,\dots,n-1.
  \end{align*}
  By induction on $n$ one can see that:
  \[
    s_n^d=1- s_0^d - \dots - s_{n-1}^d = \prod_{i=0}^{n-1}(1-u_i). 
  \]
  The differentials $\dd s_i$ can be expressed in terms of the differentials $\dd u_j$ for $j \le i$ in the following manner:
  \[
    \dd s_i = \frac{1}{d} u_i^{\frac{1}{d}-1}\prod_{j=0}^{i-1}(1-u_j)^{\frac{1}{d}} \dd u_i + \dots,
  \]
  where we omitted the terms involving $\dd u_j$'s for $j < i$ since they will be killed when taking the volume form. Indeed, the volume form can now be written as:
  \[
    \dd s_0 \cdots \dd s_{n-1} = \frac{1}{d^n} \left(\prod_{i=0}^{n-1} u_i^{\frac{1}{d}-1} (1-u_i)^{\frac{n-i-1}{d}} \right)\dd u_0 \cdots \dd u_{n-1}. 
  \]
  Putting everything together, we get:
  \[
    \eta_\alpha|_D = \frac{1}{d^n} \prod_{i=0}^{n-1} \left( u_i^{\frac{\alpha_i}{d}-1} (1-u_i)^{\frac{\sum_{j=i+1}^{n} \alpha_j}{d}-1}\right) \dd u_0 \cdots \dd u_{n-1}.
  \]
  We have successfully separated the variables in the integrand, therefore we have the following equality:
  \[
    \int_D \eta_\alpha = \frac{1}{d^n} \prod_{i=0}^{n-1} \left(\int_0^1 u_i^{\frac{\alpha_i}{d}-1} (1-u_i)^{\frac{\sum_{j=i+1}^n \alpha_j}{d} -1 } \dd u_i\right).
  \]
  Substituting the identity in Equation~\ref{eqn:euler_beta} we get a telescoping product, giving us the claimed result.
\end{proof}

\noindent For integer sequences $\alpha=(\alpha_0,\dots,\alpha_{n+1})$, $\beta=(\beta_{0},\dots,\beta_{n+1})$ let $\alpha\cdot \beta = \sum_{i=0}^{n+1} \alpha_i \beta_i$.

\begin{lemma}\label{lem:pullback}
  For admissible $\alpha$ and any $\beta$ we have:
  \[
    \int_{t^\beta D} \eta_\alpha = \xi^{\alpha \cdot \beta} \int_D \eta_\alpha.
  \]
\end{lemma}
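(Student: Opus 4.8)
The plan is to reduce the integral over $t^\beta D$ to the integral over $D$ by changing variables via the automorphism $t^\beta$, and to track the scalar factor that the form $\eta_\alpha$ picks up under this change. Recall that on the affine chart $x_{n+1}=1$, the cycle $D$ lives in the coordinates $x_0,\dots,x_n$, while $t^\beta$ acts by $x_i \mapsto \xi^{\beta_i} x_i$ (where we use $t_{n+1} = (t_0\cdots t_n)^{-1}$, so that on this chart $x_n$ is scaled by $\xi^{\beta_n}$ and the relation $\beta \cdot (1,\dots,1) \equiv$ whatever it is only affects which representative of $t^\beta$ we pick, not the computation). The key point is that $\int_{t^\beta D} \eta_\alpha = \int_D (t^\beta)^* \eta_\alpha$, because pushing forward the chain $D$ by $t^\beta$ is the same, up to orientation, as pulling back the form.

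First I would write out $(t^\beta)^*\eta_\alpha$ explicitly. Since $\eta_\alpha = \left.\left( x_0^{a_0-1}\cdots x_{n-1}^{a_{n-1}-1} x_n^{a_n-d}\, \dd x_0 \cdots \dd x_{n-1}\right)\right|_Y$ and $(t^\beta)^* x_i = \xi^{\beta_i} x_i$, each factor $x_i^{a_i-1}$ contributes $\xi^{\beta_i(a_i-1)}$, the factor $x_n^{a_n-d}$ contributes $\xi^{\beta_n(a_n-d)}$, and each differential $\dd x_i$ for $i=0,\dots,n-1$ contributes another $\xi^{\beta_i}$. Collecting exponents, the $\xi^{\beta_i(a_i-1)}\cdot \xi^{\beta_i} = \xi^{\beta_i a_i}$ for $i=0,\dots,n-1$, and from the $x_n$ factor we get $\xi^{\beta_n(a_n-d)} = \xi^{\beta_n a_n}\xi^{-d\beta_n} = \xi^{\beta_n a_n}$ since $\xi^d = 1$. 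Thus $(t^\beta)^*\eta_\alpha = \xi^{\sum_{i=0}^n \beta_i a_i}\,\eta_\alpha$. The missing term $\xi^{\beta_{n+1}a_{n+1}}$ needed to make the exponent $\alpha\cdot\beta$ comes from the fact that $t^\beta$ as an element of $R$ is only defined modulo the relation $t_0\cdots t_{n+1} = 1$, i.e. modulo adding $(1,\dots,1)$ to $\beta$; since $\alpha$ is admissible, $\sum_{i=0}^{n+1} a_i \equiv 0 \imod d$, so $\xi^{\alpha\cdot(1,\dots,1)} = 1$ and $\xi^{\alpha\cdot\beta}$ is well-defined, equal to $\xi^{\sum_{i=0}^n a_i\beta_i}$ for the representative with $\beta_{n+1}=0$. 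Hence $\int_{t^\beta D}\eta_\alpha = \xi^{\alpha\cdot\beta}\int_D \eta_\alpha$.

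I expect the main obstacle to be bookkeeping subtleties rather than anything deep: one must be careful that the orientation of $t^\beta D$ agrees with the push-forward orientation (the automorphism $t^\beta$ is a diffeomorphism, so it is orientation-preserving or at worst one fixes conventions so that it is), and one must handle the chart-change/representative ambiguity of $t^\beta$ cleanly, which is exactly where admissibility of $\alpha$ is used. A secondary point worth stating explicitly is that $t^\beta D$ need not lie in the real locus, but $\eta_\alpha$ is a holomorphic (meromorphic) form on $Y$ and the integral is the integral of a form over a chain, so no issue of choosing branches of roots arises here — unlike in Lemma \ref{lem:integral}. Once these conventions are pinned down, the computation of the scalar is the short exponent-collecting argument above.
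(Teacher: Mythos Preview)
Your proposal is correct and follows exactly the approach the paper indicates: the paper's proof is the single sentence ``This follows at once by carrying through the change of variables $(t^\beta)^* \eta_\alpha$,'' and your argument simply makes that change of variables explicit. Your handling of the $\beta_{n+1}$ bookkeeping via a choice of representative and the admissibility of $\alpha$ is the right way to pin down the exponent, and the orientation and chart issues you flag are indeed non-issues for the reason you give.
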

\begin{proof} This follows at once by the change of variables $(t^\beta)^* \eta_\alpha$.
\end{proof}

\begin{proposition}\label{prop:periods}
  For any admissible $\alpha$ and any $\beta$ we have:
  \[
    \int_{t^\beta S} \eta_\alpha = \xi^{\alpha\cdot \beta} \left( \frac{\prod_{i=0}^n \left( (1-\xi^{-\alpha_i})\Gamma(\frac{\alpha_i}{d})\right)}{d^{n} \Gamma(\sum_{i=0}^n \frac{\alpha_i}{d})}\right).
  \]
\end{proposition}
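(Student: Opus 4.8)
The plan is to reduce the whole statement to the two lemmas just established: Lemma~\ref{lem:integral}, which evaluates $\int_D \eta_\alpha$, and Lemma~\ref{lem:pullback}, which records the effect of a translation $t^\beta$. The bridge is to expand the Pham cycle $t^\beta S$ explicitly as a $\Zz$-linear combination of translates of the simplex $D$ and then integrate term by term.

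First I would recall from Section~\ref{sec:pham_cycles} that, as an $n$-chain in $Y^o$, $S = (1-t_0\inv)\cdots(1-t_n\inv)D$. Expanding the product in the group ring $R$: for $T \subseteq \{0,\dots,n\}$ put $t_T\inv := \prod_{i\in T} t_i\inv$, so that $(1-t_0\inv)\cdots(1-t_n\inv) = \sum_{T}(-1)^{|T|}t_T\inv$. Acting by $t^\beta$ and using that $t^\beta t_T\inv = t^{\beta - e_T}$ in $R$, where $e_T = \sum_{i\in T} e_i \in \Zz^{n+2}$ is the indicator vector of $T$ (note that $T$ never meets the index $n+1$), one gets the chain-level identity $t^\beta S = \sum_{T \subseteq \{0,\dots,n\}}(-1)^{|T|}\, t^{\beta - e_T}D$.

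Next I would integrate $\eta_\alpha$ over this combination. Since integrating a fixed $n$-form is linear in the chain, $\int_{t^\beta S}\eta_\alpha = \sum_T (-1)^{|T|}\int_{t^{\beta-e_T}D}\eta_\alpha$. Applying Lemma~\ref{lem:pullback} to each term gives $\int_{t^{\beta-e_T}D}\eta_\alpha = \xi^{\alpha\cdot(\beta-e_T)}\int_D\eta_\alpha = \xi^{\alpha\cdot\beta}\,\xi^{-\alpha\cdot e_T}\int_D\eta_\alpha$, and $\alpha\cdot e_T = \sum_{i\in T}\alpha_i$, so $\xi^{-\alpha\cdot e_T} = \prod_{i\in T}\xi^{-\alpha_i}$. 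Factoring out $\xi^{\alpha\cdot\beta}\int_D\eta_\alpha$, the remaining sum is $\sum_{T\subseteq\{0,\dots,n\}}(-1)^{|T|}\prod_{i\in T}\xi^{-\alpha_i} = \prod_{i=0}^n(1-\xi^{-\alpha_i})$, the standard factorization of a product of binomials. Substituting the value of $\int_D\eta_\alpha$ from Lemma~\ref{lem:integral} then yields the asserted formula.

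There is no genuine obstacle here; the only points deserving a word of care are bookkeeping ones — verifying that $t^\beta t_T\inv$ really equals $t^{\beta-e_T}$ under the identification $R \simeq \Zz[t_0,\dots,t_{n+1}]/K$, and that each $t^{\beta-e_T}D$ is a bona fide $n$-chain to which Lemma~\ref{lem:pullback} applies (it is, since that lemma was stated for arbitrary $\beta$ with $D$ fixed). Everything else is the recognition of the binomial product and a direct appeal to the two preceding lemmas.
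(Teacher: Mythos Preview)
Your proposal is correct and is exactly the paper's approach, only spelled out in detail: the paper's proof is the one-line remark that, recalling $S=(1-t_0^{-1})\cdots(1-t_n^{-1})D$, the formula follows from Lemmas~\ref{lem:integral} and~\ref{lem:pullback}. Your expansion over subsets $T\subseteq\{0,\dots,n\}$ and the recognition of $\sum_T(-1)^{|T|}\prod_{i\in T}\xi^{-\alpha_i}=\prod_{i=0}^n(1-\xi^{-\alpha_i})$ is precisely the computation implicit in that sentence.
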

\begin{proof}
  Recalling $S = (1-t_0\inv)\cdots(1-t_n\inv) D$, this follows from Lemmas~\ref{lem:integral} and~\ref{lem:pullback}.
\end{proof}

\begin{remark}
  This formula also appears in~\cite[\S I.7]{dmos-82} and~\cite[\S 15.2]{movasati--hodge}.
\end{remark}

We recall here that for an integer sequence $\alpha = (a_0,\dots,a_{n+1}) \in \Zz^{n+2}_{>0}$ such that $|\alpha| = \ell d$ we defined the meromorphic form
\begin{equation}\label{eq:omega_alpha}
  \omega_\alpha = \frac{x_0^{a_0-1}\cdots x_{n+1}^{a_{n+1}-1}}{(x_0^d + \dots x_n^d - x_{n+1}^d)^\ell} \Omega,
\end{equation}
where $\Omega$ is the volume form on $\Pp^{n+1}$ defined in Equation (\ref{eq:Omega}). Combining Proposition~\ref{prop:periods} with Proposition~\ref{prop:explicit_residue} we now obtain the following result.

\begin{theorem}\label{thm:initials}
  Let $\alpha,\beta \in \Zz_{>0}^{n+2}$ where $|\alpha| = \ell d$ for some $\ell \in \Zz_{>0}$. Then we have:
  \[
    \int_{t^\beta S} \res \omega_\alpha =  -\prod_{j=1}^{\ell-1}\left(1-\frac{a_{n+1}}{jd}\right) \prod_{i=0}^n\left( \frac{1-\xi^{-\alpha_i}}{d} \right)  \frac{\prod_{i=0}^n \Gamma(\frac{\alpha_i}{d})}{\Gamma(\sum_{i=0}^n \frac{\alpha_i}{d})} \xi^{\alpha\cdot \beta}.
  \]
\end{theorem}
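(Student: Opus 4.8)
The plan is to derive the formula by composing the two explicit computations already in hand: Proposition \ref{prop:explicit_residue}, which identifies $\res\omega_\alpha$ with a concrete meromorphic form on $Y$ up to exact forms, and Proposition \ref{prop:periods}, which evaluates the period of that form over the translated Pham cycle $t^\beta S$. Everything else is just reconciling the accumulated constants, so I would present the argument as a short chain of substitutions.

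First I would observe that the condition $|\alpha| = \ell d$ makes $\alpha$ an admissible index in the sense of Definition \ref{def:eta}, so Proposition \ref{prop:explicit_residue} applies and yields $\res\omega_\alpha \equiv c_{d,\ell,a_{n+1}}\,\eta_\alpha$ modulo exact forms, where $c_{d,\ell,a_{n+1}} = \frac{(-1)^\ell}{d^\ell(\ell-1)!}\prod_{j=1}^{\ell-1}(a_{n+1}-jd)$ is the constant named after Lemma \ref{lem:residue}. Since $t^\beta S$ is a closed $n$-cycle lying inside the affine locus $Y^o$ (recall from Section \ref{sec:pham_cycles} that $\partial S = 0$ and $D \subset Y^o$), the integral of an exact form over it vanishes, so $\int_{t^\beta S}\res\omega_\alpha = c_{d,\ell,a_{n+1}}\int_{t^\beta S}\eta_\alpha$.

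Next I would substitute the value of $\int_{t^\beta S}\eta_\alpha$ from Proposition \ref{prop:periods}, giving
\[
  \int_{t^\beta S}\res\omega_\alpha = \frac{(-1)^\ell}{d^\ell(\ell-1)!}\prod_{j=1}^{\ell-1}(a_{n+1}-jd)\,\cdot\,\xi^{\alpha\cdot\beta}\,\frac{\prod_{i=0}^n\bigl((1-\xi^{-\alpha_i})\Gamma(\tfrac{\alpha_i}{d})\bigr)}{d^{n}\,\Gamma\bigl(\sum_{i=0}^n\tfrac{\alpha_i}{d}\bigr)}.
\]
It then remains to match the scalar prefactors, i.e.\ to verify
\[
  \frac{(-1)^\ell}{d^{\ell+n}(\ell-1)!}\prod_{j=1}^{\ell-1}(a_{n+1}-jd) \;=\; -\,\frac{1}{d^{n+1}}\prod_{j=1}^{\ell-1}\Bigl(1-\tfrac{a_{n+1}}{jd}\Bigr),
\]
which is immediate from $1-\tfrac{a_{n+1}}{jd} = \tfrac{jd-a_{n+1}}{jd}$, the sign flip $\prod_{j=1}^{\ell-1}(a_{n+1}-jd) = (-1)^{\ell-1}\prod_{j=1}^{\ell-1}(jd-a_{n+1})$, and $\prod_{j=1}^{\ell-1}(jd) = d^{\ell-1}(\ell-1)!$. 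Finally, redistributing the factor $d^{-(n+1)}$ as $\prod_{i=0}^n \tfrac{1-\xi^{-\alpha_i}}{d}$, and recalling that $\int_{t^\beta S}\res\omega_\alpha$ is exactly the left-hand side of the theorem via the identity \eqref{eq:the_identity}, produces the claimed formula.

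The computation is routine once these inputs are assembled; the one point deserving care — and hence the main (if only mild) obstacle — is justifying the passage from the chart-dependent expression of Proposition \ref{prop:explicit_residue} to a genuine period, namely that $t^\beta S$ is a bona fide $n$-cycle contained in $Y^o$ and disjoint from the poles of $\res\omega_\alpha$ (which lie on $\{x_n = 0\}\cap Y$, away from the real simplex $D$). Both facts are already established in Sections \ref{sec:pham_cycles} and \ref{sec:compute_residues}, so the theorem follows from the arithmetic above.
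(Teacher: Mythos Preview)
Your proof is correct and follows exactly the approach indicated in the paper, which simply states that the theorem is obtained by combining Proposition \ref{prop:periods} with Proposition \ref{prop:explicit_residue}. You have in fact been more careful than the paper in spelling out the constant-matching arithmetic and in noting that $t^\beta S$ lies in $Y^o$ away from the poles of $\eta_\alpha$, so that the equivalence modulo exact forms in Proposition \ref{prop:explicit_residue} is legitimate under the integral.
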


\subsection{Changing the type of the Fermat curve}  \label{sec:change_type}

For a sequence of non-zero complex numbers $\xc:=(c_0,\dots,c_{n+1})$ we may want to take our initial Fermat hypersurface to be cut out by:
\[
  f_{\xc} := c_0x_0^{d} + \dots + c_{n+1}x_{n+1}^d.
\]
Then the integrals given in Theorem~\ref{thm:initials} change slightly, which we record here for convenience.

Our standard choice of Fermat hypersurface is $f_{(1,\dots,1,-1)}$ which we will continue to denote by $f_Y$. For $i=0,\dots,n$ let $\mu_i \in \Cc$ be a $d$-th root of $c_i$ and let $\mu_{n+1} \in \Cc$ be a $d$-th root of $-c_{n+1}$. Defining $\varphi: \Pp^{n+1} \to \Pp^{n+1}: [x_0,\dots,x_{n+1}] \mapsto [\mu_0\inv x_0 , \dots, \mu_{n+1}\inv x_{n+1}]$, we get an isomorphism:
\[
  \varphi|_{Z(f_Y)} : Z(f_Y) \isoto Z(f_\xc).
\]
We will use the basis of $\PH_n(Z(f_\xc),\Zz)$ obtained as the image of the basis we just constructed for $\PH_n(Y,\Zz)$ using Pham cycles. 

Let us write $\omega_{\alpha,\xc}$ for the rational form defined as in (\ref{eq:omega_alpha}) but with the denominator $f_\xc^{\ell}$ instead of $f_Y^\ell$. Then the result of Theorem~\ref{thm:initials} need only be modified by scaling the integration via $\prod_{i=0}^{n+1} \mu_i^{-a_i}$: 
\begin{equation}\label{eq:modified_init}
  \int_{\varphi(\gamma)}\res \omega_{\alpha,\xc} = \int_\gamma \res \varphi^* \omega_{\alpha,\xc} = \left( \prod_{i=0}^{n+1} \mu_i^{-a_i}  \right) \int_\gamma \res\omega_\alpha.
\end{equation}

\subsection{From primitive to entire homology} \label{sec:primitive_to_entire}

When $n$ is odd, homology agrees with primitive homology. However, when $n$ is even we need to add an extra class to primitive homology to generate homology. For this section assume $n$ is even.

As demonstrated in Lemma~\ref{lem:generators} we may add the class of a linear space contained in $Y$. We picked $L$ introduced as the image of the map in Equation (\ref{eq:L}) for our implementation. However, the rest of this section is independent of which linear space is chosen.

Let us denote by $[L]$ the homology class of $L$. Furthermore, let us denote by $\gamma_\beta$ the homology class of $t^\beta S$ in $\PH_n(Y,\Zz)$. Using Corollary~\ref{cor:primitive_basis}, pick a subset $B \subset \Zz^{n+2}$ such that $\{\gamma_\beta \mid \beta \in B\}$ freely generates $\PH_n(Y,\Zz)$.

As mentioned in Remark~\ref{rem:v}, with $h$ defined as in Notation~\ref{not:h}, we may write $[L] = \frac{1}{d}h + \gamma_L$ where $\gamma_L \in \PH_n(Y,\Qq)$. Since the residues $\res \omega_\alpha$ are in the primitive cohomology, they annihilate the class $h$. Therefore, we have the equality:
\[
  \int_{[L]} \res \omega_\alpha = \int_{\gamma_L} \res \omega_\alpha.
\]
In particular, the period of $L$ can be computed without taking any new integrals. We need only determine the expression of $\gamma_L$ in terms of $\gamma_\beta$ for $\beta \in B$.

Write $\gamma_L = \sum_{\beta \in B} a_{\beta}\gamma_\beta$ for $a_\beta \in \Qq$ and let $a=[a_\beta]_{\beta \in B}$ be the row vector formed by these coefficients. Let $b = [\gamma_L \cdot \gamma_\beta]_{\beta \in B}$ be the row vector formed by intersecting $L$ with the basis elements, note that $\gamma_L \cdot \gamma_\beta = [L] \cdot \gamma_\beta$ for all $\beta \in B$. Finally, let $M=[\gamma_\beta \cdot \gamma_{\beta'}]_{\beta,\beta' \in B}$ be the intersection matrix on primitive cohomology. Then, evidently, we have:
\begin{equation}\label{eq:ab}
  a = b \cdot M\inv.
\end{equation}
It remains to compute the various intersection products. For the intersection product on Pham cycles, see any one of~\cite{arnold-1984, movasati--hodge, looijenga-10}. The intersection product of Pham cycles with linear spaces are computed in~\cite{degtyarev-16}.

\begin{remark}
  If we deform the Fermat hypersurface $Y$ to another hypersurface $X$ and carry the basis of homology $\{\gamma_\beta \mid \beta \in B\} \cup \{[L]\}$ during this deformation, then the linear relation computed in Equation (\ref{eq:ab}) remains constant. Therefore, the periods of deformations of $L$ can be computed simply from the periods of deformations of $\gamma_\beta$'s.
\end{remark}

\section{Examples}\label{sec:examples}

In this section we will demonstrate the performance of our algorithm and compare it with other algorithms in the cases of curves. We will also take this opportunity to show how complexity of the problem changes when the path of deformation is broken down into small, preferably monomial, steps (see also Section~\ref{sec:algorithm}).

The implementation of our algorithm is called {\tt PeriodSuite} and is available here\footnote{\url{https://github.com/emresertoz/PeriodSuite}}. All running times refer to the CPU time on a laptop (MacBook Pro 2016, 2.6 GHz Intel Core i7).

\begin{remark}
  We recall here that for our target hypersurfaces we will only compute as many periods as necessary to determine the Hodge structure. We called these \emph{classical periods} and explained the distinction in Section~\ref{sec:classical_periods}.
\end{remark}

\subsection{Comparing period matrices of curves}\label{sec:compare_periods}
We will compare the period matrices we get from {\tt PeriodSuite} against the ones we get from the {\tt periodmatrix} function in the package {\tt algcurves}~\cite{deconinck2011} of Maple. 

Both {\tt algcurves} and {\tt PeriodSuite} use the same basis of differential forms, at least up to permutation. But we do not have control over the homology bases used to compute the period matrices. Thus, when we run either algorithm we find two approximate period matrices $M$ and $N$ which appear very different but for which we expect to have an invertible $2g \times 2g$ integer matrix $B$ such that $MB=N$. A standard application of LLL allows us to find small integer matrices that approximately solve this system. 
If the entries in the matrix $B$ are very small compared to the precision available in $M,N$; and if the matrix $MB$ is very close to $N$, we may be quite sure that the period matrices approximated by both programs are the same after a change of basis in homology by the matrix $B$.

The function {\tt ChangeHomologyBasis} in {\tt PeriodSuite} implements the procedure outlined above. We use this implementation in the examples below where the computation of the change of homology basis $B$ is practically instant.

\begin{remark}
  We compare our algorithm against {\tt algcurves}~\cite{deconinck2011} because, at the time of writing, we were not aware of {\tt RiemannSurfaces}~\cite{nils-18} and {\tt abelfunctions}~\cite{christopher16} did not work in SageMath~8.1. Unfortunately, as numerical integration is slow in Maple, {\tt algcurves} is the slowest of the three giving us an unfair advantage. Nevertheless, we feel the comparison serves well to illustrate the weaknesses and strengths of our algorithm.
\end{remark}

\subsection{Elliptic curves} We will begin with smooth plane cubics so that we may display the Picard--Fuchs equations we get. For higher genera, they will be too big to print. The distinction between a straight deformation path and a deformation path broken into monomial steps is well illustrated by the display size of the corresponding ODEs.

\subsection*{Random sparse cubic} Let $f_1=-5x^3 - 2xz^2 + y^3 + 7yz^2$, which defines an elliptic curve with $j$-invariant 
\[
  -\frac{10536960}{323761}. 
\]
We deform periods from the Fermat curve $f_0 = -5x^3+y^3+z^3$. If we deform $f_0$ to $f_1$ directly, we need to integrate the following Picard--Fuchs equation:
\begingroup
\fontsize{8pt}{\baselineskip}\selectfont
\begin{multline*}
  D^2+\frac{3t^8-\frac{5154331}{323761}t^7+\frac{74810025}{5180176}t^6+\frac{721635}{647522}t^5-\frac{4510245}{5180176}t^4+ \frac{75510}{323761}t^3-\frac{2025}{5180176}t^2+\frac{2025}{647522}t-\frac{6075}{5180176}}{t^9-\frac{2564243}{647522}t^8+\frac{14301993}{5180176}t^7+ \frac{308115}{647522}t^6-\frac{2016975}{5180176}t^5+\frac{33705}{323761}t^4+
  \frac{70875}{5180176}t^3-\frac{2025}{323761}t^2+\frac{6075}{5180176}t}D+\\
  \frac{\frac{3}{4}t^7-\frac{12924595}{2590088}t^6+\frac{26678799}{5180176}t^5+\frac{1034475}{5180176}t^4- \frac{90795}{647522}t^3+\frac{80685}{2590088}t^2-\frac{6075}{5180176}t+\frac{2025}{5180176}}{t^9-\frac{2564243}{647522}t^8+\frac{14301993}{5180176}t^7+\frac{308115}{647522}t^6-\frac{2016975}{5180176}t^5+\frac{33705}{323761}t^4+\frac{70875}{5180176}t^3-\frac{2025}{323761}t^2+\frac{6075}{5180176}t},
\end{multline*}
\endgroup
which takes 0.01 seconds to find using our {\tt PeriodSuite} and 0.4 seconds to integrate with {\tt oa--analytic} to 20 digits. The resulting period vector $M_1$ is given below, truncated to 10 digits:
\begin{equation*}
M_1 \sim  [ 0.2547540432 - 0.4890903559\sqrt{-1}, 0.2547540432 + 0.4890903559\sqrt{-1} ].
\end{equation*}
The $j$-invariant of this period vector agrees to $16$ digits with the exact $j$-invariant of the curve.

Alternatively, we can define the following sequence of curves:
\begin{align*}
g_0 &= -5x^3+y^3+z^3 \\
g_1 &= -5x^3-2xz^2+y^3+z^3\\
g_2 &= -5x^3-2xz^2+y^3\\
g_3 &= -5x^3-2xz^2+y^3+7yz^2.
\end{align*}
which changes only one monomial at a time. Let us write $G_i = (1-t)g_{i-1} + t g_i$ for $i=1,2,3$. In general, breaking the path like this results in a significant improvement in speed. In this case, there is not much room for improvement but the ODEs are noticeably simpler.  Let us recall from Section~\ref{sec:classical_periods} that we need to compute the entire period matrix for each of the intermediate curves, but not for the last curve. Therefore, there are two ODEs to integrate for each of $G_1$ and $G_2$ but we need only integrate one for $G_3$: 
\begin{center}
\begin{tabular}[]{LLL}
  G_1: & D + \frac{16t^2}{32t^3 + 135} & D^2 + \frac{112t^3 - 270}{32t^4 + 135t}D \\[5pt]
G_2: & D + \frac{45t - 45}{135t^2 - 270t + 167} & D^2 + \frac{360t^2 - 720t + 328}{135t^3 - 405t^2 + 437t - 167}D \\[5pt]
G_3: & D^2 + \frac{5145t^2}{1715t^3 - 8}D + \frac{5145t}{6860t^3 - 32} &
\end{tabular}.
\end{center}
It takes a second in total to compute these five ODEs and to integrate them. This is slower than the first approach by half a second, due to general overhead, but it is clear from the nature of the ODEs that the second approach will be favorable for more complicated examples. The result of the integration gives a 20 digit period vector $M_2$, which is given below truncated to 10 digits:
\begin{equation*}
  M_2 \sim [ 0.2547540432 + 0.4890903559\sqrt{-1}, -0.2547540432 + 0.4890903559\sqrt{-1}].
\end{equation*}
Note that $M_2$ is slightly different from $M_1$ due to monodromy. Finally, {\tt algcurves} takes a little over 3 seconds to compute $M_3$ to 20 digits, which is given below truncated to 10 digits:
\begin{equation*}
  M_3\sim [ -0.5095080865,-0.2547540432-0.4890903559\sqrt{-1}].
\end{equation*}
Using {\tt ChangeHomologyBasis} we find matrices $B_{12}$ and $B_{13}$ such that $M_1 \sim M_2 B_{12}$ and $M_1 \sim  M_3 B_{13}$ with error less than $10^{-18}$, where: 
\[
  B_{12} = \begin{bmatrix} 0 &  1 \\ -1 & -1 \end{bmatrix} \qquad B_{13} = \begin{bmatrix} -1 &  0 \\ 1 & -1 \end{bmatrix}.  
\]

\subsection*{Random cubic curve} We state the computation times for a randomly generated cubic: 
\[
  f_1:= 4x^3 + 5x^2y + 4x^2z - 7xy^2 + 4xyz + 7xz^2 - 8y^3 - 4yz^2 + 3z^3.
\]
We take $f_0 = 4x^3-8y^3+3z^3$ as the initial point of our deformation and compute the periods to 20 digits of precision. 
The straight path from $f_0$ to $f_1$ took 0.15 seconds for the computation of the ODEs and 2.45 seconds for the integration. Deforming one monomial at a time took 0.70 seconds for the ODEs and 5 seconds for the integration. It takes {\tt algcurves} 6 seconds to compute the periods. In all cases, we get period vectors that are equivalent by inspection and the $j$-invariants agree with the $j$-invariant of the curve.

\subsection{Plane quartic curves}

The ODEs we get from quartic curves are far more difficult then the ones we get from cubic curves and, in particular, they cannot be displayed here. One can, however, roughly measure the complexity of an ODE using two integers: its order and degree.

\begin{definition}\label{def:order-degree}
  Let $\cd \in \Qq[t][\del_t]$ be an ODE with relatively prime coefficients. The \emph{order} of $\cd$ is the highest power of the differentiation operator $\del_t$ appearing in $\cd$. The \emph{degree} of $\cd$ is the highest degree of all the polynomials appearing as coefficients of $\cd$. For $\cd \in \Qq(t)[\del_t]$, an ODE with rational function coefficients, define its order and degree by reducing to the previous case through the clearing of denominators and common factors.
\end{definition}

In practice, the complexity of the Picard--Fuchs equations we encounter, that is, the time it takes to construct as well as to integrate them appear to be reflected well by their order and degree.

\subsection*{Favorable quartic} Consider the following randomly generated sparse quartic:
\[
  f_1=4x^4 + 5xz^3 + 5y^4 - y^3z - 6z^4.
\]
We pick the Fermat curve $f_0 = 4x^4+5y^4-6z^4$ as our initial curve. We begin with the straight path from $f_0$ to $f_1$. The three ODEs for this deformation are computed in 1.58 seconds. The three order and degree pairs are displayed below:
\[
\begin{array}[]{ccc}
 (6,85)&(6,79)&(6,84).
\end{array}
\]
The system is integrated in 50 seconds with end result having 30 digits of precision.

Let us now demonstrate how deforming one monomial at a time fares in comparison. We take the following two step deformation: 
\begin{align*}
g_0 &= 4x^4 + 5y^4 - 6z^4 \\
g_1 &= 4x^4 + 5xz^3 + 5y^4 - 6z^4 \\
g_2 &= 4x^4 + 5xz^3 + 5y^4 - y^3z - 6z^4.
\end{align*}
As explained in Section~\ref{sec:classical_periods}, we have to solve for 6 ODEs for the first family and 3 for the second one. It takes 0.40 seconds to compute these 9 ODEs, as each one of them is simpler than the three ODEs we got previously. With $G_i = (1-t)g_{i-1} + tg_i$ denoting the $i$-th family, we list the order-degree pairs of the corresponding sets of ODEs:
\[
\begin{array}[]{lcccccc}
  G_1: & (2,5)  & (2,4)  & (2,5)  & (3,5) & (2,5) & (3,6) \\
  G_2: & (6,40) & (6,39) & (6,39) &       &       & \phantom{(3,6)}.
\end{array}
\]
We can integrate the entire system in 2.46 seconds with 30 digits of precision. For the sake of space, we will round the result to 4 decimal places:
\[\mbox{\scriptsize
  $\left[\begin{smallmatrix}
    0.1388 - 0.1336\sqrt{-1}&  0.1343 + 0.1384\sqrt{-1}& -0.1433 + 0.1384\sqrt{-1}& -0.1388 + 0.1336\sqrt{-1}& -0.1388 - 0.1432\sqrt{-1}&  0.1388 - 0.1432\sqrt{-1}\\
    0.1285 - 0.1467\sqrt{-1}& -0.1168 + 0.1379\sqrt{-1}&  0.1345 + 0.1379\sqrt{-1}&  0.1256 - 0.1263\sqrt{-1}& -0.1285 - 0.1291\sqrt{-1}& -0.1256 + 0.1430\sqrt{-1}\\
    0.0285 - 0.2047\sqrt{-1}& 0.2052 + 0.0282\sqrt{-1}& 0.1481 + 0.0282\sqrt{-1}&-0.0286 + 0.2048\sqrt{-1}&-0.0285 + 0.1482\sqrt{-1}& 0.0286 + 0.1481\sqrt{-1}
\end{smallmatrix}\right]$
}.
\]

The {\tt algcurves} package cannot handle this curve by default and crashes after an hour of computation. However, if asked to project on to the $y$-axis, instead of the default $x$-axis, it can compute the period matrix in 30 seconds with 30 digits of precision. After permuting the rows so that their choice of holomorphic forms matches ours, and after rounding to 4 decimal places, we get:
\[\mbox{\scriptsize
$\left[\begin{smallmatrix}
-0.1343+0.1384\sqrt{-1}   & -0.0090  & 0.2686   & -0.6806+0.1384\sqrt{-1}    & -0.1388-0.1432\sqrt{-1}    & 0\\
0.1168+0.1379\sqrt{-1}   & 0.0177 & -0.2336 & 0.0934+0.1379\sqrt{-1}  & 0.1256+0.1439\sqrt{-1}   & -0.2541-0.2729\sqrt{-1}\\
-0.2052+0.0283\sqrt{-1} & 0.3533  & 0.4104  & -0.6726+0.0283\sqrt{-1} & -0.0286+0.1481\sqrt{-1} & 0
\end{smallmatrix}\right]$
}.
\]

Applying {\tt ChangeHomologyBasis} we find that the change of basis matrix for homology is:
\[
  B=\begin{bmatrix}
    -2&1&1&3&0&0\\
    0&0&1&1&0&-1\\
    -1&1&0&-1&0&1\\
    0&0&0&-1&0&0\\
    -1&0&0&1&1&1\\
    -1&0&0&2&1&0\end{bmatrix}.
\]
After applying this matrix, the greatest discrepancy between the period matrices is less than~$10^{-18}$. 

\begin{remark}
  To summarize, {\tt PeriodSuite} takes 3 seconds with a monomial path (it takes 52 seconds with a direct path). On the other hand, {\tt algcurves} takes 30 seconds. All computations are done to get 30 digits of final precision. The existence of a small homology change of basis matrix provides overwhelming evidence that the two period matrices agree.
\end{remark}

\subsection*{Time versus precision}
Let us now compare the time it takes for {\tt PeriodSuite} against {\tt algcurves} when we demand higher and higher precision. Figure~\ref{fig:time-precision} plots the time it takes to compute the periods of the ``Favorable quartic'' against requested precision. The blue small dots represent {\tt algcurves} and the red big dots represent {\tt PeriodSuite}. As briefly explained in the introduction, we numerically integrate ODEs which is an operation where time requirement increases linearly with number of digits of precision. On the other hand, {\tt algcurves} numerically computes Riemann integrals, for which cost of computation increases at least quadratically with number of digits of precision~\cite{bailey11}.

\begin{figure}
  \includegraphics[width=0.8\textwidth]{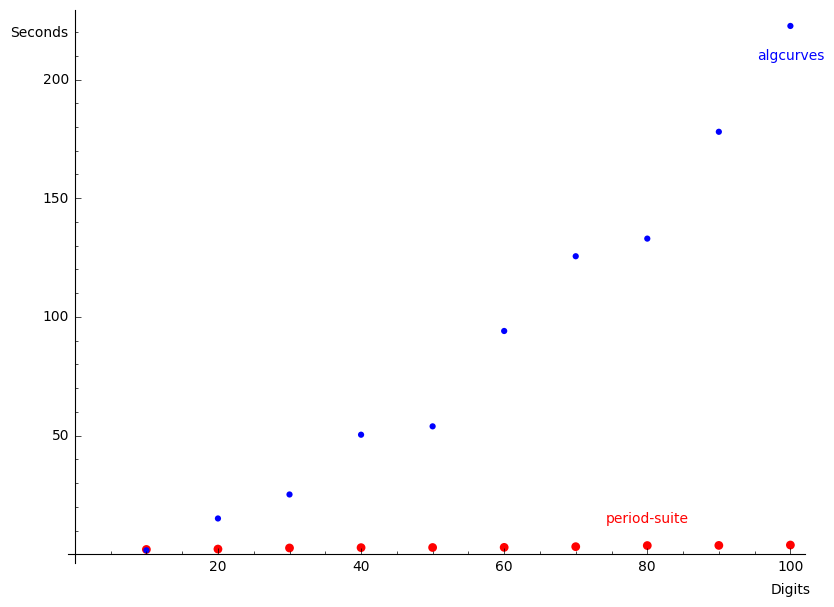}
  \caption{Time vs.\ precision plot for Favorable quartic}
  \label{fig:time-precision}
\end{figure}

\subsection*{Unfavorable quartic} For our algorithm, the time it takes to compute the periods of a curve varies wildly depending on the curve. As a rough measure, the number of monomials that need to be changed from a curve of Fermat type indicates the difficulty of computation. We now look at a random quartic that is much less favorable from this point of view:
\[
  f_1=-7x^3y + 5xy^3 + 7xyz^2 - 4yz^3 + z^4.
\]
A straight path from $f_0 = x^4+y^4+z^4$ is a bad idea. It takes 107 seconds to compute the three Picard--Fuchs equations alone and the resulting equations are of order six and have coefficients of degree 126. We record the three order-degree pairs below for later comparison:
\[
  \begin{array}[]{ccc}
    (6,126) & (6,126) & (6,120).
  \end{array}
\]
Collectively, these differential equations have 360 singularities and the ones near zero are pictured in Figure~\ref{fig:unbroken}. Note that a generically smooth pencil of quadrics can have at most 27 singular fibers. This means, in light of Proposition~\ref{prop:sols_are_holo}, that the rest of the 303 singularities are \emph{apparent} singularities, that is, solutions converge near these points and they do not inhibit computations. Nevertheless, numerical integration took 27 minutes with 100 digits of precision. High precision was required here for successful integration of this complex system. 

\begin{figure}[h]
  \centering
  \includegraphics[width=0.4\textwidth]{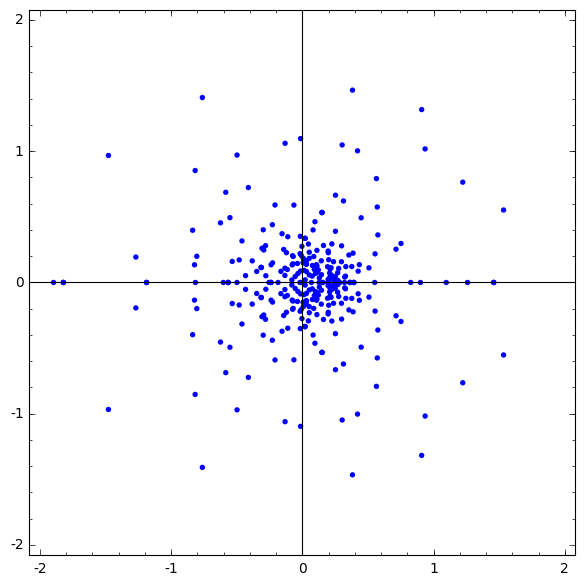}
  \caption{Unfavorable quartic: singularities of the ODEs for the straight path}
  \label{fig:unbroken}
\end{figure}

Let us now break the path of deformation and take the following six steps:
\[
  \begin{aligned}
g_0 &= x^4+y^4+z^4             ,&       g_4 &= x^4-7x^3y+5xy^3-4yz^3+z^4,\\
g_1 &= x^4+5xy^3+y^4+z^4       ,&       g_5 &= -7x^3y+5xy^3-4yz^3+z^4,\\
g_2 &= x^4+5xy^3+z^4           ,&       g_6 &= -7x^3y+5xy^3+7xyz^2-4yz^3+z^4. \\
g_3 &= x^4+5xy^3-4yz^3+z^4     ,&  
\end{aligned}
\]
As before, each deformation is given by $G_i = (1-t)g_{i-1} + tg_i$ where $i=1,\dots,6$. Recall that the first five families require the computation of six ODEs each and the last one requires only three. These 33 ODEs are determined in 42 seconds, already beating the straight path. Furthermore, these ODEs behave far better than the previous three. We list the order-degree pairs for these ODEs below: 
\begin{equation*}
  \begin{array}{ lcccccc } 
    G_1: & ( 2, 5 )  & ( 2, 4 )  & ( 2, 5 )  & ( 3, 6 )  & ( 2, 5 )  & ( 3, 5 ) \\
    G_2 : & ( 2, 3 )  & ( 2, 4 )  & ( 2, 3 )  & ( 3, 5 )  & ( 3, 4 )  & ( 2, 4 ) \\
    G_3 : & ( 6, 17 ) & ( 6, 17 ) & ( 6, 16 ) & ( 7, 18 ) & ( 7, 17 ) & ( 7, 18 ) \\
    G_4 : & ( 6, 36 ) & ( 6, 39 ) & ( 6, 35 ) & ( 7, 41 ) & ( 7, 47 ) & ( 7, 43 ) \\
    G_5 : & ( 6, 20 ) & ( 6, 22 ) & ( 6, 19 ) & ( 7, 22 ) & ( 7, 24 ) & ( 7, 21 ) \\
    G_6 : & ( 6, 29 ) & ( 6, 28 ) & ( 6, 26 ) &           &           &
  \end{array}
\end{equation*}
The singularities of these six systems of ODEs are displayed in Figure~\ref{fig:broken}. Most of the singularities are once again apparent. 
  The integration of the entire system takes less than 3 minutes with 30 digits of precision.
  
On the other hand, {\tt algcurves} does not find this curve any more difficult than the previous one. It can compute the periods to 30 digits in 29 seconds. It may be that our methods are not optimal for curves, especially if low precision period matrices are sufficient. 

\begin{figure}[h]
  \includegraphics[width=0.3\textwidth]{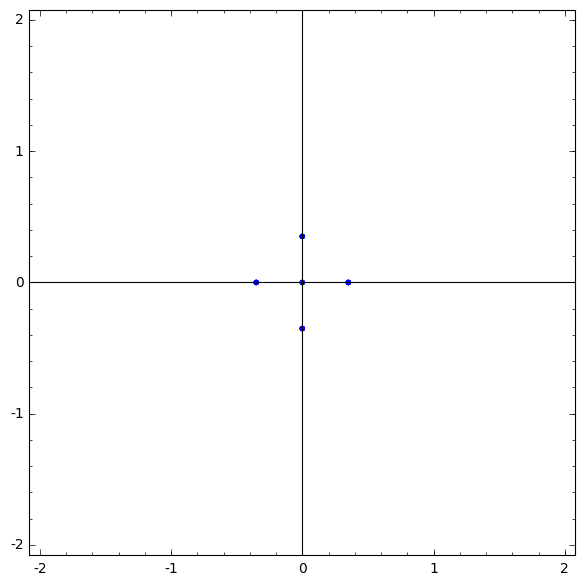}
  \includegraphics[width=0.3\textwidth]{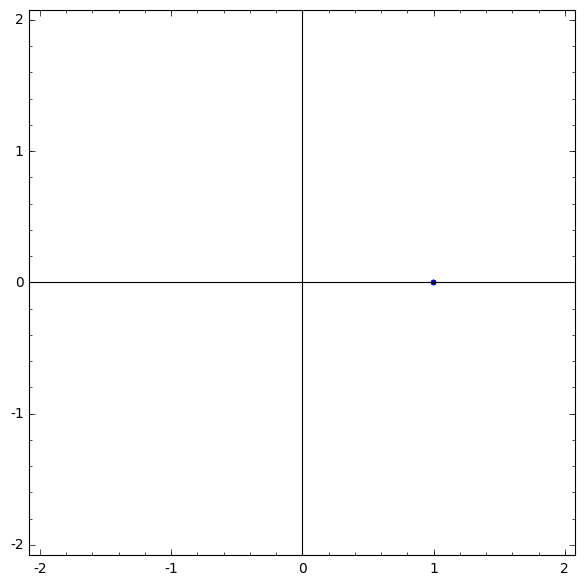}
  \includegraphics[width=0.3\textwidth]{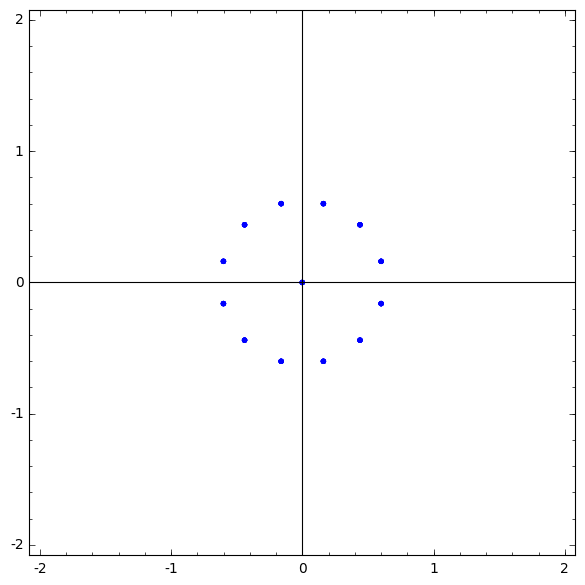}
  \includegraphics[width=0.3\textwidth]{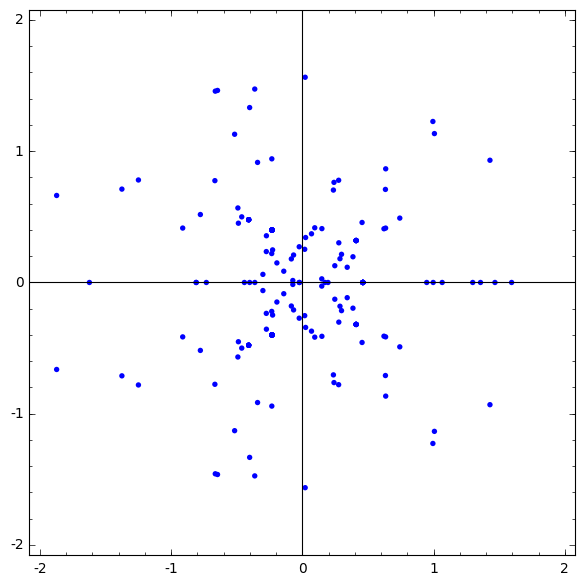}
  \includegraphics[width=0.3\textwidth]{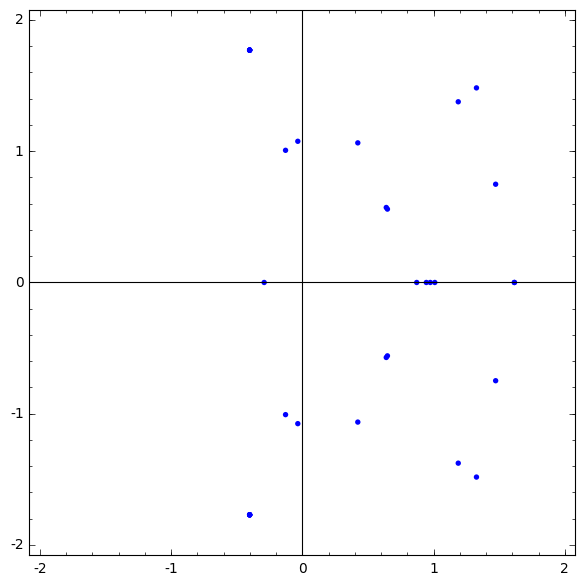}
  \includegraphics[width=0.3\textwidth]{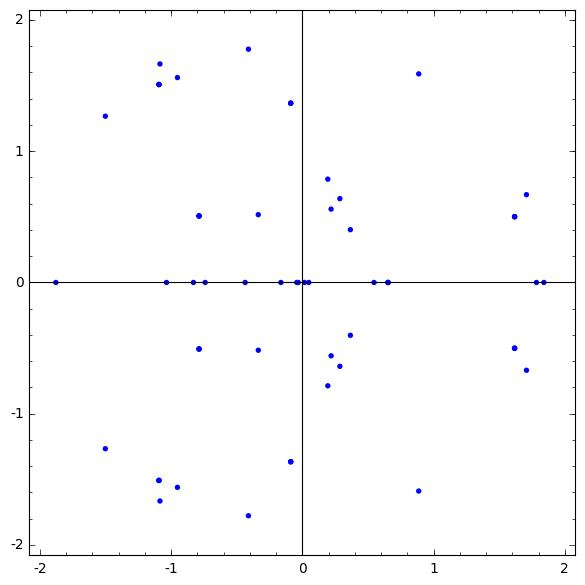}
  \caption{Unfavorable quartic: singularities of the ODEs for a monomial path}
  \label{fig:broken}
\end{figure}

\subsection{Higher genus curves}\label{sec:higher_genus}

\subsection*{A curve of genus 6}
We generated a random sparse degree 5 curve with small coefficients, getting 
\[
  -10x^5 + 3xy^3z - 2xz^4 - 2y^4z.
\]
A straight deformation from a Fermat curves appears hopeless. After some experimentation we found that the following path of deformation works well:
\[
  \begin{aligned}
  g_0 &= 10x^5 + y^5 + z^5              ,&   g_3 &= 10x^5 - 2xz^4 - 2y^4z + z^5 ,\\
  g_1 &= 10x^5 + y^5 - 2y^4z + z^5      ,&    g_4 &= 10x^5 - 2xz^4 - 2y^4z ,\\
  g_2 &= 10x^5 - 2y^4z + z^5            ,&    g_5 &= 10x^5 + 3xy^3z - 2xz^4 - 2y^4z.
  \end{aligned}
\]
The Picard--Fuchs equations take 14 seconds to find and 73 seconds to integrate to 30 digits of accuracy. Here {\tt algcurves} takes 64 seconds to compute the period matrix for 30 digits of accuracy.

\subsection*{A curve of genus 105}
As the periods of Fermat curves are readily available, the periods of any curve obtained from a ``small'' deformation of a Fermat type curve will be relatively easy to compute. Here, a small deformation means that the defining equation should differ only by a monomial or two from being a sum of powers, $ax^d+by^d+cz^d$. To illustrate this principle, let us compute the periods of the following genus 105 curve:
\[
  f_1=x^{16} + y^{16} + z^{16} + x^5y^5z^6.
\]
Choosing $f_0=x^{16} + y^{16} + z^{16}$ we compute the Picard--Fuchs equations of the family $(1-t)f_0+tf_1$ in 528 seconds. The orders of the 105 ODEs are either 12 or 14, while their degrees are less than 30. It takes another 317 seconds to integrate these equations with a final precision $10^{-25}$. In this case, {\tt algcurves} and {\tt abelfunctions} crash after a couple of minutes. With the newest patches, {\tt RiemannSurface} computes these integrals in just under 2 hours.

Needless to say, the $105\times 210$ period matrix is too big to display in print. However, since the total computation time is only 15 minutes, the interested reader can easily reproduce the results.

\subsection*{A curve of genus 1711}
Now consider the genus 1711 curve $f_1:= x^{60}+y^{60}+z^{60}+x^{20}y^{20}z^{20}$ with $f_0 =x^{60}+y^{60}+z^{60}$. Performed in isolation, it takes 8.5 seconds to compute all of 1711 Picard--Fuchs equations and 50 seconds to integrate them to 30 digits of precision. However, the period matrix has about six million entries which makes the storage and manipulation of this matrix a bottle neck. With our current implementation it takes over 2 hours for the computation to end.

\subsection{Periods of surfaces}\label{sec:surfaces}

We will now compute the classical periods for a few surfaces. In a future paper we will compute the period vectors for many more surfaces and determine their Picard rank and classify their transcendental lattices. 
Here, we will simply discuss issues regarding performance. 

\subsection*{A simple quartic K3} We take a random sparse quartic $f_1 = -3x^4 + 9xw^3 - 8y^3z - 4z^4 + w^4$ which is simple for us as it close to the Fermat quartic $f_0=-3x^4+y^4-4z^4+w^4$. Indeed, even the straight path from $f_0$ to $f_1$ gives a Picard--Fuchs equation of order 4 and degree 36, which takes 0.160 seconds to find and takes 5.985 seconds to integrate. The final result is precise to 20 decimal places. After truncating to 4 decimal places and denoting $\sqrt{-1}$ by $i$ we have:
\begingroup
\fontsize{7pt}{\baselineskip}\selectfont
\begin{multline*}
\big[-0.0128 + 0.4610i, -0.4056 + 0.2194i, -0.4056 + 0.2194i, -0.4056 - 0.2194i, 0.0128 - 0.4610i,0.0128 - 0.4610i,\\
  -0.0128 - 0.4610i, 0.0128 - 0.4610i, -0.0128 - 0.4610i, 0.0128 - 0.4610i,0.4056 - 0.2194i,  0.4056 + 0.2194i,\\
   0.4056 - 0.2194i,  0.4056 + 0.2194i,  0.4056 - 0.2194i,0.4056 + 0.2194i, 0.4056 - 0.2194i,-0.0128 + 0.4610i, \\
  0.0128 + 0.4610i,-0.0128 + 0.4610i,0.0128 + 0.4610i\big].
\end{multline*}
\endgroup

\subsection*{A fortunate monomial path} 
This example illustrates how drastic it can be to find a good monomial path. Take the quartic:
\[
  f_1=-x^4+2xy^3+2xw^3+10z^3w+3w^4,
\]
and let $f_0=-x^4+y^4+z^4+3w^4$. Then the computation of the Picard--Fuchs equation for the \emph{straight path} from $f_0$ to $f_1$ could not be found after 53 hours of computation.

On the other hand, consider the following monomial path:
\[
  \begin{aligned}
    g_0 &= -x^4+y^4+z^4+3w^4        ,&   g_3 &= -x^4+2xy^3+z^4+10z^3w+3w^4        ,\\
    g_1 &= -x^4+2xy^3+y^4+z^4+3w^4  ,&   g_4 &= -x^4+2xy^3+10z^3w+3w^4            ,\\
    g_2 &= -x^4+2xy^3+z^4+3w^4      ,&   g_5 &= -x^4+2xy^3+2xw^3+10z^3w+3w^4.
  \end{aligned}
\]
It takes 0.670 seconds to determine all of the $1+ 4\times 21 = 85$ ODEs we need. It takes 17 seconds to integrate these ODEs with precision $10^{-20}$, or 59 seconds with precision $10^{-100}$. The order-degree pairs of this system are as follows:
\begingroup
\fontsize{8pt}{\baselineskip}\selectfont
\[\arraycolsep=0.5pt
  \begin{array}{lccccccccccccccccccccc}
    G_1 \colon & (2,5) & (3,6) & (2,5) & (2,5) & (3,6) & (3,6) & (3,6) & (3,5) & (3,5) & (3,5) & (3,5) & (3,5) & (2,5) & (2,5) & (3,4) & (3,4) & (3,4) & (2,4) & (2,4) & (2,5) & (3,6)\\
    G_2 \colon & (2,3) & (3,5) & (3,4) & (3,4) & (2,4) & (2,4) & (3,3) & (3,3) & (3,3) & (3,3) & (3,3) & (3,3) & (2,3) & (2,3) & (3,3) & (3,3) & (3,3) & (2,4) & (2,4) & (2,3) & (3,5)\\
    G_3 \colon & (2,5) & (3,6) & (3,5) & (3,6) & (2,4) & (2,5) & (3,4) & (3,5) & (3,6) & (3,4) & (3,5) & (3,6) & (3,5) & (2,5) & (3,4) & (3,5) & (3,6) & (3,5) & (2,5) & (3,6) & (3,6)\\
    G_4 \colon & (2,3) & (3,4) & (3,5) & (3,4) & (2,4) & (2,3) & (3,3) & (3,3) & (3,3) & (3,3) & (3,3) & (3,3) & (3,4) & (2,4) & (3,3) & (3,3) & (3,3) & (3,4) & (2,4) & (3,5) & (3,5)\\
    G_5 \colon & (4,4)&&&&&&&&&&&&&&&&&&&& 
\end{array}
\]
\endgroup

\subsection*{A harder quartic} 
The quartic surface defined by $f_1=9x^3z + 4y^3w + 3z^4 + 8z^2w^2 + 2w^4$ appears to be harder: it takes 3.4 seconds for the Picard--Fuchs equations to be computed and 101 seconds to integrate them to 33 digits of accuracy. 

\subsection*{A quintic surface} 
We took a randomly generated quintic surface $f_1=6x^4z + 3y^4w - 9y^3z^2 + 3z^5 - w^5$. It took 74 seconds to determine the Picard--Fuchs equations and 7.5 minutes to integrate these equations to determine the period matrix to 24 digits of accuracy.

\subsection{A cubic threefold}  \label{sec:threefold}
Degree three hypersurfaces are typically favorable for our algorithm, since the Griffiths--Dwork reductions are easy to perform. Indeed, the periods of a sparse cubic are almost always quickly computed even if the defining polynomial of the cubic is very different from a Fermat type polynomial.

However, for our demonstration we choose a relatively simple cubic threefold so that the computation of its Picard--Fuchs equations are fast and we can concentrate on the time cost of increasing precision. Even though the Picard--Fuchs equations are found quickly, they are not particularly easy to integrate because the singular fibers are not favorably positioned. 

Letting $f_1 = -8x^2w-8y^3+z^3-9zs^2+w^3$ we find the following sequence of hypersurfaces where only one monomial is changed at each step:
\[
  \begin{aligned}
  g_0&=x^3-8y^3+z^3+w^3+s^3           ,&      g_3&=x^3-8x^2w-8y^3+z^3-9zs^2+w^3 ,\\
  g_1&=x^3-8y^3+z^3-9zs^2+w^3+s^3     ,&      g_4&=-8x^2w-8y^3+z^3-9zs^2+w^3. \\
  g_2&=x^3-8y^3+z^3-9zs^2+w^3         ,&    
  \end{aligned}
\]
The Picard--Fuchs equations are computed in 0.230 seconds and it takes 6 seconds to integrate the system with 20 digits of precision, or 620 seconds with 1000 digits of precision. The time versus precision graph is plotted in Figure~\ref{fig:extreme_precision}. Of course, the integrator {\tt oa-analytic}~\cite{mezzarobba-oa} does all the heavy lifting here.

\begin{figure}
  \includegraphics[width=0.7\textwidth]{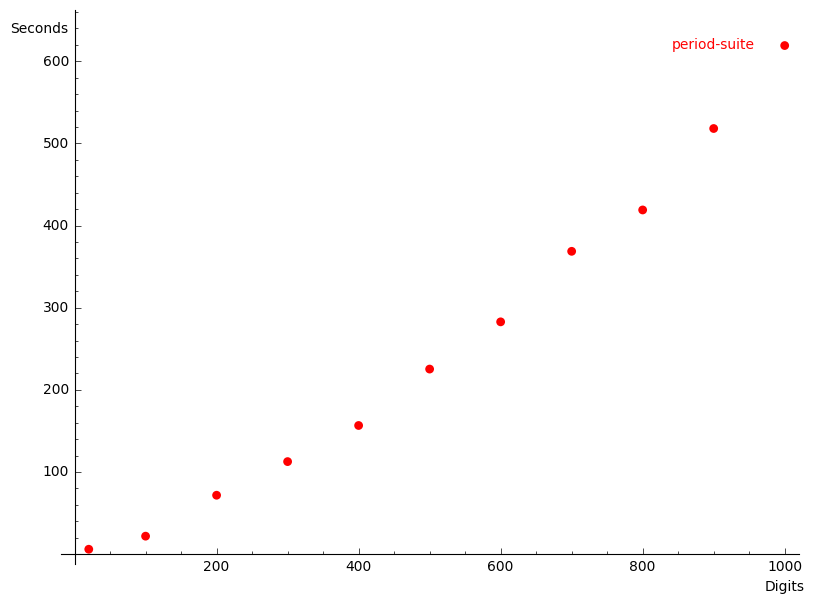}
  \caption{Time vs.\ precision plot for a cubic threefold}
  \label{fig:extreme_precision}
\end{figure}

\printbibliography

\end{document}